\documentclass[11pt, letterpaper, leqno]{article}
\usepackage{amsmath,amsthm,amscd,amssymb,amsbsy,cite,amstext,mathtools}
\usepackage{algorithm,algorithmic}
\usepackage{pgf}
\usepackage{color}
\usepackage{enumerate}
\usepackage{mathrsfs}
\usepackage[symbol]{footmisc}
\usepackage[colorlinks=true, pdfstartview=FitV, linkcolor=blue, citecolor=orange, urlcolor=blue]{hyperref}
\usepackage{tikz-cd}
\setcounter{tocdepth}{3}
\date{}
\usepackage{lineno}
\usepackage{hyperref}
\usepackage{amsfonts}
\usepackage[T1]{fontenc}
\usepackage{euler}

\setlength{\oddsidemargin}{0cm} \setlength{\evensidemargin}{0cm}
\setlength{\marginparwidth}{0in}
\setlength{\marginparsep}{0in}
\setlength{\marginparpush}{0in}
\setlength{\topmargin}{0in}
\setlength{\headsep}{0.2in}
\setlength{\textheight}{8.5in}
\setlength{\textwidth}{6.5in}

\theoremstyle{plain}
\newtheorem{theorem}{Theorem}
\newtheorem{lemma}{Lemma}[section]

\newtheorem{claim}{Claim}[section]
\newtheorem{problem}{Problem}

\theoremstyle{remark}
\newtheorem{remark}{Remark}[section]
\newtheorem*{acknowledgment}{Acknowledgment}

\theoremstyle{definition}

\newtheorem{definition}{Definition}[section]

\numberwithin{equation}{section}
\newcommand{\eqindent}{\displayindent0pt\displaywidth\textwidth}

\makeatletter
\newenvironment{subtheorem}[1]{%
	\def\subtheoremcounter{#1}%
	\refstepcounter{#1}%
	\protected@edef\theparentnumber{\csname the#1\endcsname}%
	\setcounter{parentnumber}{\value{#1}}%
	\setcounter{#1}{0}%
	\expandafter\def\csname the#1\endcsname{\theparentnumber.\Alph{#1}}%
	\expandafter\def\csname theH#1\endcsname{theorem.\theparentnumber.\Alph{#1}}%
	\unskip\ignorespaces
}{%
	\setcounter{\subtheoremcounter}{\value{parentnumber}}%
	\ignorespacesafterend
}
\makeatother
\newcounter{parentnumber}

\newcommand{\LA}[1]{\refstepcounter{equation}\text{(\theequation)}\label{#1}}
\newcommand{\LAQ}[2]{\begin{itemize}\item[\LA{#1}]{#2} \end{itemize}}
\numberwithin{equation}{section}

\newcommand{\supp}[1]{\mathrm{supp}\left( {#1} \right)}
\newcommand{\norm}[1]{\| {#1}\| }
\newcommand{\diam}{\mathrm{diam}\,}
\newcommand{\set}[1]{\left\{#1\right\}}
\newcommand{\void}{\varnothing}
\newcommand{\abs}[1]{\left|#1\right|}
\newcommand{\dist}[2]{\mathrm{dist}\left({#1}, {#2}\right)}

\newcommand{\jet}{\mathscr{J}}
\newcommand{\Q}{\mathcal{Q}}
\newcommand{\R}{\mathbb{R}}
\newcommand{\brac}[1]{\left(#1\right)}
\renewcommand{\d}{\partial}
\newcommand{\Rn}{\mathbb{R}^n}
\newcommand{\grad}{\nabla}
\newcommand{\qt}[1]{``\hspace{1sp}#1\,''}

\newcommand{\E}{\mathcal{E}}
\newcommand{\ct}{C^2}
\newcommand{\ctp}{C^2_+}
\newcommand{\rt}{\R^2}
\renewcommand{\P}{\mathcal{P}}
\newcommand{\pos}{[0,\infty)}

\newcommand{\for}{\enskip\text{for}\enskip}
\newcommand{\ddtm}{\frac{d^m}{dt^m}}
\newcommand{\dq}{\delta_Q}
\newcommand{\Ls}{\Lambda^\sharp}

\newcommand{\ul}[1]{\underline{#1}}
\newcommand{\A}{\ul{\mathcal{A}}}
\newcommand{\depth}{\mathrm{depth}}
\renewcommand{\phi}{\varphi}
\newcommand{\vp}{\vec{P}}

\begin{document}

	\title{Algorithms for Nonnegative $C^2(\mathbb{R}^2)$ Interpolation}
	\author{Fushuai Jiang \and Garving~K. Luli}
	\date{\today}
	\maketitle

	\begin{abstract}
		Let $ E \subset \mathbb{R}^2 $ be a finite set, and let $ f : E \to [0,\infty) $. In this paper, we address the algorithmic aspects of nonnegative $C^2$ interpolation in the plane. Specifically, we provide an efficient algorithm to compute a nonnegative $C^2(\mathbb{R}^2)$ extension of $ f $ with norm within a universal constant factor of the least possible. We also provide an efficient algorithm to approximate the trace norm. 
	\end{abstract}

	\section{Introduction}

	For integers $ m\geq 0,n \geq 1 $, we write $ C^m(\Rn) $ to denote the Banach space of $ m $-times continuously differentiable real-valued functions such that the following norm is finite
	\begin{equation*}
		\norm{F}_{C^m(\Rn)} := \sup\limits_{x \in \Rn}\max_{\abs{\alpha} \leq m}\abs{ \partial^\alpha F(x) }.
	\end{equation*}
	We write $ C^m_+(\Rn) $ to denote the collection of nonnegative functions in $ C^m(\Rn) $. 
	Let $ E \subset \Rn $ be finite. 
	We write $ C^m_+(E) $ to denote the collection of functions $ f: E \to \pos $. If $ S $ is a finite set, we write $ \#(S) $ to denote the number of elements in $ S $. We use $ C $ to denote constants that depend only on $ m $ and $ n $.

	In this paper, we provide algorithmic solutions to the following problems for $ m = n = 2 $. These algorithms were announced in \cite{JL20,JL20-Ext}. 
	
	\begin{problem}\label{prob.norm}
		Let $ E \subset \Rn $ be a finite set. Let $ f : E \to \pos $. Compute the order of magnitude of 
		\begin{equation}
			\norm{f}_{C^m_+(E)} := \inf \set{ \norm{F}_{C^m(\Rn)} : F|_E = f\text{ and } F \geq 0 }\,. \label{eq.trace}
		\end{equation}
	\end{problem}
	
	\begin{problem}\label{prob.interpolant}
		Let $ E \subset \Rn $ be a finite set. Let $ f : E \to \pos $. Compute a nonnegative function $ F \in C^m(\Rn) $ such that $ F|_E = f $ and $ \norm{F}_{C^m(\Rn)} \leq C\norm{f}_{C^m_+(E)} $. 
	\end{problem}

	By \qt{order of magnitude} we mean the following: Two quantities $ M $ and $ \tilde{M} $ determined by $ E,f,m,n $ are said to have the \underline{same order of magnitude} provided that $ C^{-1}M \leq \tilde{M} \leq CM $, with $ C $ depending only on $ m $ and $ n $. To compute the order of magnitude of $ \tilde{M} $ is to compute a number $ M $ such that $ M $ and $ \tilde{M} $ have the same order of magnitude. 
	
	By \qt{computing a function $F$} from $ (E,f) $, we mean the following: After processing the input $ (E,f) $, we are able to accept queries consisting of a point $ x \in \Rn $, and produce a list of numbers $ (f_\alpha(x) : \abs{\alpha} \leq m) $. The algorithm \qt{computes the function $ F $} if for each $ x \in \Rn $, we have $ \d^\alpha F(x) =f_\alpha(x) $ for $ \abs{\alpha} \leq m $. 
	
	Problem \ref{prob.interpolant} is an open problem posed in \cite{FI20-book}, and Problem \ref{prob.norm} is closely related to Problem \ref{prob.interpolant}. The theoretical aspects of the problems for $ m = n = 2 $ were addressed in \cite{JL20,JL20-Ext}. We refer the readers to \cite{JL20,JL20-Ext} for a more thorough discussion on the problems. 
	
	In this paper, we content ourselves with an idealized computer with standard von Neumann architecture that is able to process exact real numbers. We refer the readers to \cite{FK09-Data-2} for discussion on finite-precision computing.

	In \cite{JL20-Ext}, we proved the following.
	
	\begin{theorem}\label{thm.bd}
		Let $ E \subset \rt $ be a finite set. There exist (universal) constants $C, D$, and a map $ \E :   \ctp(E) \times \pos \to \ctp(\rt)  $ such that the following hold. 
		
		\begin{enumerate}[(A)]
			\item Let $ M \geq 0 $. Then for all $ f \in \ctp(E) $ with $ \norm{f}_{\ctp(E)} \leq M $, we have $ \E(f,M) = f $ on $ E $ and 
			$ \norm{\E(f,M)}_{\ct(\rt)} \leq CM $.
			
			\item For each $ x \in \rt $, there exists a set $ S(x) \subset E $ with $ \# (S(x)) \leq D $ such that for all $ M \geq 0 $ and $ f, g \in \ctp(E) $ with $ \norm{f}_{\ctp(E)}, \norm{g}_{\ctp(E)} \leq M $ and $ f|_{S(x)} = g|_{S(x)} $, we have
			\begin{equation*}
				\d^\alpha \E(f,M)(x) = \d^\alpha \E(g,M)(x)
				\for
				\abs{\alpha} \leq 2\,.
			\end{equation*}
			
		\end{enumerate}
		
	\end{theorem}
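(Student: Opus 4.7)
The plan is to build $\E$ via a Calderón--Zygmund (CZ) decomposition of $\rt$ adapted to the data $(f,M)$, assign a local nonnegative extension on each CZ square that depends only on a bounded number of points of $E$, and patch these local extensions with a bounded-overlap partition of unity. The CZ decomposition is obtained by a stopping-time rule: starting from a large dyadic square, subdivide $Q$ until, from a bounded set $S_Q \subset E$ of nearby points, one can produce a local nonnegative jet realizing $f|_{E \cap CQ}$ with $\ct$-norm $\lesssim M$. This produces pairwise disjoint squares $\{Q\}$ covering $\rt$ whose dilates have finite overlap, and for each $Q$ a tag $S_Q$ with $\#S_Q \leq D_0$, such that the stopping decision for $Q$ depends only on $f|_{S_Q}$ and $M$.

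On each CZ square $Q$, I would then construct a local nonnegative $\ct$ function $F_Q$ that agrees with $f$ on $E \cap CQ$, satisfies $\norm{F_Q}_{\ct(CQ)} \lesssim M$, and is determined entirely by $f|_{S_Q}$. This is where the positivity-preserving technology of \cite{JL20,JL20-Ext} is invoked: one selects a suitable nonnegative polynomial (or sum of squares representation) consistent with $f|_{S_Q}$, then adjusts by a nonnegative bump supported in $CQ$ to enforce both interpolation and $F_Q \geq 0$. The global extension is
\begin{equation*}
	\E(f,M)(x) \;:=\; \sum_Q \theta_Q(x)\, F_Q(x),
\end{equation*}
where $\{\theta_Q\}$ is a $\ct$ partition of unity subordinate to dilates of $\{Q\}$, with $\norm{\theta_Q}_{\ct} \lesssim (\diam Q)^{-2}$. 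Since each $F_Q \geq 0$ and $\theta_Q \geq 0$, the sum is nonnegative; bounded overlap and standard Whitney patching give $\norm{\E(f,M)}_{\ct(\rt)} \lesssim M$, which is conclusion (A).

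The main obstacle will be ensuring that neighboring $F_Q$ and $F_{Q'}$ agree well enough in $\ct$ on the shared boundary that the partition-of-unity patching loses at most a constant factor. In the unconstrained setting this follows from CZ ``goodness'' and compatibility of nearby Whitney jets; under the positivity constraint, the admissible local jets live in a convex (not linear) set, so the stopping rule must be strong enough to guarantee that the nonnegative jets chosen for adjacent squares lie within $O(M)$ of each other in the appropriate scaled norm. Calibrating the stopping rule to simultaneously enforce bounded $S_Q$, local nonnegativity, and boundary compatibility is the heart of the construction.

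For the bounded-depth conclusion (B), define
\begin{equation*}
	S(x) \;:=\; \bigcup_{Q \,:\, x \,\in\, \mathrm{supp}(\theta_Q)} S_Q.
\end{equation*}
Bounded overlap of the $\theta_Q$ bounds the number of squares in the union by a constant, and $\#S_Q \leq D_0$ for each, so $\#S(x) \leq D$ for some universal $D$. Because the CZ decomposition near $x$, the choice of each $F_Q$ contributing at $x$, and the partition of unity all depend only on $f|_{S_Q}$ and $M$ (and not on $f$ elsewhere), the values $\d^\alpha \E(f,M)(x)$ for $\abs{\alpha}\leq 2$ are determined by $f|_{S(x)}$ and $M$, giving (B).
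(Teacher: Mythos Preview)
Your high-level architecture (CZ decomposition, local nonnegative extensions, partition-of-unity patching) matches the paper's, but there is a genuine gap in your bounded-depth argument. You let the CZ stopping rule depend on $(f,M)$; consequently the collection of CZ squares containing a given $x$, and hence your set $S(x)=\bigcup_{Q\ni x} S_Q$, depends on $(f,M)$. But conclusion (B) requires a \emph{single} set $S(x)$, determined by $E$ alone, that works for \emph{all} $(f,M)$. Your final paragraph asserts that the decomposition ``near $x$'' depends only on $f|_{S_Q}$, but this is circular: which squares $Q$ are near $x$ is itself determined by the decomposition, which depends on $f$ globally. There is no uniform bound on the number of dyadic ancestors whose stopping decisions you must consult.

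The paper avoids this by making the CZ decomposition depend on $E$ only, via the geometry of the sets $\sigma^\sharp(x,16)$ (equivalently $\sigma(\A(x))$): a square $Q$ is declared OK when either $\#(E\cap 5Q)\le 1$ or $\diam\sigma(\A(x))\ge A_1\delta_Q$ for all $x\in E\cap 5Q$. This is a purely $E$-dependent condition, so the partition $\Lambda_0$, the partition of unity $\{\theta_Q\}$, and the representative points $x_Q^\sharp$ are all fixed once $E$ is fixed. Only the local jets $T_Q(f,M)$ and local extensions $\E_Q(f,M)$ depend on $(f,M)$, and each of these is shown to depend on $f$ only through a bounded set $S^\sharp(Q)\subset E$. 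The second ingredient you identify as ``the heart of the construction'' but do not supply is handled in the paper by forcing $\jet_{x_Q^\sharp}\E_Q(f,M)=T_Q(f,M)\in\Gamma^\sharp_+(x_Q^\sharp,16,CM,f)$; membership in these $\Gamma^\sharp_+$ sets automatically yields the required $O(M)$ compatibility between neighboring squares (Remark~\ref{rem.16}). Without a data-independent stopping rule and a concrete mechanism like the $\Gamma^\sharp_+$ transition jets, your sketch does not close.
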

	
	A few remarks on Theorem \ref{thm.bd} are in order. First of all, in \cite{JL20}, we showed that the extension operator $\E$ cannot be linear in general. The constant $D$ appearing in Theorem \ref{thm.bd} is called the \underline{depth} of the extension operator $\E$. This generalizes the notion of the depth of a {\em linear} extension operator first studied by C. Fefferman in \cite{F05-L,F07-St} (for further discussion on the depth of linear extension operators see also G.K. Luli \cite{L10}). The depth of an extension operator (both linear and nonlinear) measures the computational complexity of the extension. The existence of a linear extension operator of bounded depth is one of the main ingredients for the Fefferman-Klartag \cite{FK09-Data-1, FK09-Data-2}  and Fefferman  \cite{F09-Data-3} algorithms for solving the interpolation problems without the nonnegative constraints; the algorithms in \cite{F09-Data-3,FK09-Data-1, FK09-Data-2} are likely essentially the best possible.

	In this paper, we will provide another proof of Theorem \ref{thm.bd} but with algorithmic complexity in mind. This is the content of Theorem \ref{thm.bd-alg}. 
	
	We start with a definition.
	
	\begin{definition}\label{def.depth}
		\newcommand{\bn}{{\bar{N}}}
		\newcommand{\rbn}{\R^\bn}
		Let $ \bn \geq 1 $ be an integer. Let $ \mathcal{B} = \set{\xi_1, \cdots, \xi_\bn} $ be a basis of $ \rbn $. Let $ \Omega \subset \rbn $ be a subset. Let $ X $ be a set. Let $ \Xi: \Omega \to X $ be a map.
		\begin{itemize}
			\item We say $ \Xi $ \underline{has depth $D$}, if there exists a $ D $-dimensional subspace $ V = {\rm span}\brac{\xi_{i_1}, \cdots, \xi_{i_D}} $, $ \xi_{i_1}, \cdots, \xi_{i_D} \in \mathcal{B} $, such that for all $ z_1, z_2 \in\Omega $ with $ \pi_V(z_1) = \pi_V(z_2) $, we have $ \Xi(z_1) = \Xi(z_2) $. Here, $ \pi_V : \rbn \to V $ is the natural projection. 
			\item Suppose $ \Xi $ has depth $ D $. Let $  V = {\rm span}\brac{\xi_{i_1}, \cdots, \xi_{i_D}} $ be as above. By an \underline{efficient representation} of $ \Xi $, we mean a specification of the index set $ \set{i_1, \cdots, i_D} \subset \set{1, \cdots, \bn} $ and an algorithm to compute a map $\tilde{\Xi}:\Omega\cap V \to X$ in $ C_D $ operations, i.e., given an input $\omega \in \Omega \cap V$, we can compute $\tilde{\Xi}(\omega)$ in $C_D$ operations. Here, the map $ \tilde{\Xi} $ agrees with $ \Xi $ on $ \Omega\cap V $, and $C_D$ is a constant depending only on $D$.

		\end{itemize}

	\end{definition}

	Note that in general, the set $ \Omega $ may have complicated geometry. For the purpose of this paper, we will only be considering when $ \Omega $ is some Euclidean space or the first quadrant of some Euclidean space.

	\begin{remark}\label{rem.depth}
		
		Suppose $\Xi:\R^{\bar{N}}\to \R$ is a linear functional. Recall from \cite{FK09-Data-2} that a \qt{compact representation} of a linear functional $\Xi:\R^{\bar{N}}\to \R$ consists of a list of indices $ \set{i_1, \cdots, i_D} \subset \set{1, \cdots, \bar{N}} $ and a list of coefficients $ \chi_{i_1}, \cdots, \chi_{i_D} $, so that the action of $\Xi$ is characterized by
		\begin{equation*}
			\Xi : (\xi_1, \cdots, \xi_{\bar{N}}) \mapsto \sum_{\Delta = 1}^{D}\chi_{i_\Delta} \cdot \xi_{i_\Delta}.
		\end{equation*}
		Therefore, given $v \in \mathrm{span}(\xi_{i_1},\cdots,\xi_{i_D})$, we can compute $\Xi(v)$ by computing the dot product of two vectors of length $D$, which requires $C_D$ operations. The present notion of \qt{efficient representation} is a natural generalization adapted to the nonlinear nature of nonnegative interpolation (see \cite{JL20,JL20-Ext}). Since a nonlinear map in general does not admit a simple representation, we emphasize the complexity of an extension operator rather than its structure.

	\end{remark}

	We think of $ \ctp(E) \cong \pos^N $. We use the standard orthonormal frame $ \R^{N} $ as a basis for the purpose of defining finite depth. We write $ \P^+ $ to denote the vector space of polynomials with degree no greater than two, and we write $ \jet_x^+ F $ to denote the two-jet of $ F $ at $ x $.
	
	The main theorem of the paper is the following.
	
	\begin{theorem}\label{thm.bd-alg}
		Suppose we are given a finite set $ E \subset \rt $ with $ \#(E) = N $. Then there exists a collection of maps $ \set{\Xi_x : x \in \rt} $, where $ \Xi_x : \ctp(E) \times [0,\infty) \to \P^+ $ for each $ x \in \rt $, such that the following hold.
		\begin{enumerate}[(A)]
			\item There exists a universal constant $ D $ such that for each $ x \in \rt $, the map $ \Xi_x(\cdot\,,\cdot) : \ctp(E)\times 
			\pos \to \P^+ $ is of depth $ D $.

			\item Suppose we are given $ (f,M) \in \ctp(E) \times \pos $ with $ \norm{f}_{\ctp(E)} \leq M $.
			Then there exists a function $ F \in \ctp(\rt) $ such that
			\begin{equation*}
				\begin{split}
					\jet^+_x F = \Xi_x(f,M) \text{ for all } x \in \rt,\,
					\norm{F}_{\ct(\rt)} \leq CM,
					\text{ and }
					F(x) = f(x)
					\for
					x \in E.
				\end{split}
			\end{equation*}

			\item There is an algorithm, that takes the given data set $E$, performs one-time work, and then responds to queries.
			
			A query consists of a point $ x \in \rt $, and the response to the query is the depth-$ D $ map $ \Xi_x $, given in its efficient representation (see Definition \ref{def.depth}).
			
			The one-time work takes $ CN\log N $ operations and $ CN $ storage. The work to answer a query is $ C\log N $. 
		\end{enumerate}
	\end{theorem}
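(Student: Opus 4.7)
The plan is to turn the extension operator $\E$ produced in Theorem~\ref{thm.bd} into an efficient data structure, following the strategy of \cite{F09-Data-3,FK09-Data-1,FK09-Data-2} for the unconstrained problem, but adapted to the nonlinear construction of \cite{JL20-Ext}. The proof of Theorem~\ref{thm.bd} in \cite{JL20-Ext} builds $\E(f,M)$ in three stages: (i) produce a Calderón-Zygmund (CZ) decomposition $\mathrm{CZ}(f,M)$ of $\rt$ subordinate to the data; (ii) attach to each $Q \in \mathrm{CZ}(f,M)$ a local polynomial extension that only depends on a bounded-cardinality neighborhood $S(Q) \subset E$; (iii) glue the local extensions with a Whitney partition of unity. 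Since $\jet^+_x \E(f,M)$ only involves the $O(1)$ CZ squares whose Whitney bumps are nonzero at $x$, the set $S(x):=\bigcup_{Q\ni x} S(Q)$ has universally bounded size, which is what yields (A).

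For the one-time work I would first store $E$ in a two-dimensional balanced-box-decomposition tree in $CN\log N$ time and $CN$ storage; this tree answers the geometric predicates (bounding-box, $k$-nearest-neighbor, local polynomial fitting) used by the CZ construction of \cite{JL20-Ext} in $C\log N$ time each. Only $O(N)$ CZ squares are \emph{non-trivial}, i.e.\ bounded in size and within a bounded distance of $E$; far from $E$ the decomposition stabilizes into explicitly describable large squares. I would enumerate the non-trivial squares, precompute $S(Q)$ and the constant-size combinatorial data specifying the local polynomial assignment on each such $Q$, and store enough of the tree to support point-location queries later. All together this is $CN\log N$ operations and $CN$ storage.

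For a query $x \in \rt$, the BBD-tree returns in $C\log N$ time the CZ square $Q\ni x$, together with the $O(1)$ neighboring squares whose partition-of-unity bumps are nonzero at $x$. From the precomputed data I would read off the indices of $S(x)\subset E$ (with $\#S(x)\leq D$) and emit a constant-size evaluator which, on input of $f|_{S(x)}$ and $M$, runs the local polynomial construction and the Whitney glue formula at $x$ in $C_D$ operations and outputs $\jet^+_x \E(f,M) \in \P^+$. The pair (index list, evaluator) is by Definition~\ref{def.depth} an efficient representation of a depth-$D$ map $\Xi_x$, and (B) follows since $F:=\E(f,M)$ already satisfies the conclusions of Theorem~\ref{thm.bd}(A).

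The main obstacle will be carrying out the CZ construction of \cite{JL20-Ext} algorithmically under all three constraints simultaneously: $CN\log N$ preprocessing, $CN$ storage, and universally bounded depth. Unlike in the linear case, the CZ stopping-time predicates depend nonlinearly on $(f,M)$, so one cannot reuse a single $(f,M)$-independent decomposition; one has to verify that every stopping-time test can be implemented as a bounded-depth probe of the data answerable in $C\log N$ time, that the recursion terminates after generating only $O(N)$ non-trivial squares, and that the assignment $Q\mapsto S(Q)$ stays within a universally bounded cardinality throughout. Establishing these algorithmic properties of the nonlinear CZ machinery of \cite{JL20-Ext}, and certifying that the resulting evaluator can be written as a constant-size straight-line program uniformly in $x$, is the technical heart of the proof; the remaining plumbing (point location, partition-of-unity assembly, and jet computation) follows \cite{FK09-Data-1,FK09-Data-2} essentially verbatim.
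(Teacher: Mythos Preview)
Your three-stage outline (CZ decomposition, bounded-depth local extensions, Whitney glue) matches the paper, but you have misidentified the central difficulty. You write the decomposition as $\mathrm{CZ}(f,M)$ and flag as the ``main obstacle'' that ``the CZ stopping-time predicates depend nonlinearly on $(f,M)$, so one cannot reuse a single $(f,M)$-independent decomposition.'' This is incorrect: in both \cite{JL20-Ext} and the present paper the CZ collection $\Lambda_0$ depends on $E$ alone. The stopping rule (Definition~\ref{def.CZ}) is that $Q$ is OK when $\#(E\cap 5Q)\le 1$ or when $\diam\sigma(\mathcal{A}(x))\ge A_1\delta_Q$ for all $x\in E\cap 5Q$, where $\sigma(\mathcal{A}(x))$ is the PALP approximation to $\sigma^\sharp(x,16)$ computed from $E$ only (Lemma~\ref{lem.FK-palp}). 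That the decomposition is $(f,M)$-independent is exactly what makes the one-time work depend on $E$ only and lets a single precomputed structure answer all queries; if you chase an $(f,M)$-dependent decomposition you will be solving a harder problem than the one actually posed.

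The genuine technical content you are missing lies elsewhere. First, one must approximate $\sigma^\sharp(x,16)$ efficiently by PALPs (Section~4), which is what drives the CZ stopping rule and also supplies the bounded-cardinality sets $S(\mathcal{A}(x))$ used to define $S^\sharp(Q)$. Second, the local extension on each relevant $Q$ is not a generic ``local polynomial fit'': the data in $5Q$ lie on a nearly-straight $C^2$ curve (Lemma~\ref{lem.CZ}(B)), one straightens by a diffeomorphism $\Phi$, solves a one-dimensional extension problem (Theorems~\ref{thm.bd-1d},~\ref{thm.bd-1dpm}), and adds a transition jet $T_Q(f,M)$ obtained by a bounded-size convex quadratic program (Section~\ref{section:transition jet}). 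The only $(f,M)$-nonlinearity in the whole construction is confined to this last step: the quadratic program for $T_Q$ and a binary switch $\Delta^Q_{f,M}$ choosing between the signed and nonnegative one-dimensional operators. Showing that $T_Q(f,M)\in\Gamma_+^\sharp(x_Q^\sharp,16,CM,f)$ (Lemma~\ref{lem.TQ}) is what guarantees compatibility across neighboring CZ squares and is the real heart of the argument, not algorithmic handling of an $(f,M)$-dependent decomposition.
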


	\begin{remark}\label{rem.Sx}
		Theorem \ref{thm.bd-alg}(C) implies that for each $ x \in \rt $, there exists a set $ S(x) \subset E $ with $ \#(S(x)) \leq D $ such that for all $ (f,M), (g,M) \in \ctp(E)\times\pos $ with $ \norm{f}_{\ctp(E)}, \norm{g}_{\ctp(E)} \leq M $ and $ f|_{S(x)} = g|_{S(x)} $, we have $ \Xi_x(f,M) = \Xi_x(g,M) $. Moreover, after one-time work using at most $ CN\log N $ operations and $ CN $ storage, we can perform the following task: Given $ x \in \rt $, we can produce the set $ S(x) $ using no more than $ C\log N $ operations. 
	\end{remark}

	Using Theorem \ref{thm.bd-alg}, we obtain an algorithmic version of the Sharp Finiteness Principle (see {Theorem~5} in \cite{JL20}):

	\begin{theorem}[Algorithmic Sharp Finiteness Principle]\label{thm.sfp}
		Let $ E \subset \R^2 $ with $ \#(E) = N < \infty $. Then there exist universal constants $ C_1, C_2 ,C_3, C_4, C_5 $ and a list of subsets $ S_1, S_2, \cdots, S_L \subset E $ satisfying the following.
		\begin{enumerate}[(A)]
			\item We can compute the list $ \set{S_\ell: \ell = 1, \cdots, L} $ from $E, $ using one-time work of at most $ C_1N\log N $ operations, and using storage at most $ C_2N $. 
			\item $ \#(S_\ell) \leq C_3  $ for each $ \ell = 1, \cdots, L $.
			\item $ L \leq C_4N $.
			\item Given any $ f : E \to [0,\infty) $, we have
			\begin{equation*}
				\max_{\ell = 1, \cdots, L}\norm{f}_{C^2_+(S_\ell)} \leq \norm{f}_{C^2_+(E)} \leq C_5 \max_{\ell = 1, \cdots, L}\norm{f}_{C^2_+(S_\ell)}\,.
			\end{equation*}
		\end{enumerate}
		
	\end{theorem}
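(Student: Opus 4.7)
My plan is to obtain the list $\{S_\ell\}$ directly from the algorithmic data structure produced by Theorem~\ref{thm.bd-alg}, and then derive the finiteness inequality in (D) from Theorem~\ref{thm.bd-alg}(A)--(B). The one-time work of Theorem~\ref{thm.bd-alg}(C) effectively produces a decomposition of $\rt$ into at most $CN$ regions on each of which the efficient representation of $\Xi_x$ is constant; each region carries an associated index set $S(x)\subseteq E$ of cardinality at most $D$, readable off from that representation. I enumerate these regions by traversing the underlying data structure (a CZ-type tree) and collect the associated index sets, augmenting the list with the singletons $\{x\}$, $x\in E$, so that $x\in S_\ell$ for some $\ell$ whenever $x\in E$. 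This yields a list $S_1,\ldots,S_L$ with $L\le C_4N$ and $\#(S_\ell)\le C_3:=D+1$, computed within $C_1N\log N$ operations and $C_2N$ storage, establishing (A)--(C).

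The lower bound in (D) is immediate: every $\ctp(\rt)$-extension of $f$ restricts to a $\ctp(\rt)$-extension of $f|_{S_\ell}$. For the upper bound, set $M:=\max_\ell\norm{f}_{\ctp(S_\ell)}$, and for each $\ell$ pick a nonnegative $\ct(\rt)$-extension $F_\ell$ of $f|_{S_\ell}$ with $\norm{F_\ell}_{\ct(\rt)}\le 2M$. Let $g_\ell:=F_\ell|_E\in\ctp(E)$; then $\norm{g_\ell}_{\ctp(E)}\le 2M$ and $g_\ell|_{S_\ell}=f|_{S_\ell}$. Applying Theorem~\ref{thm.bd-alg}(B) to the pair $(g_\ell,2M)$ yields $\hat F_\ell\in\ctp(\rt)$ with $\norm{\hat F_\ell}_{\ct(\rt)}\le 2CM$, $\hat F_\ell|_E=g_\ell$, and $\jet^+_y\hat F_\ell=\Xi_y(g_\ell,2M)$ for every $y$. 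The unconditional depth in Theorem~\ref{thm.bd-alg}(A) then gives $\Xi_y(g_\ell,2M)=\Xi_y(f,2M)$ whenever $S(y)\subseteq S_\ell$, since $g_\ell$ and $f$ then agree on $S(y)$.

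The main obstacle is assembling the $\hat F_\ell$'s into a single $F\in\ctp(\rt)$ with $\norm{F}_{\ct(\rt)}\le C_5M$ and $F|_E=f$, given that distinct $\hat F_\ell$ may disagree outside their associated regions. I plan to patch via a partition of unity $\{\chi_\ell\}$ subordinate to a bounded-overlap enlargement of the cell decomposition, arranged so that every $\ell$ active at a point $x$ satisfies $S(x)\subseteq S_\ell$; then define $F:=\sum_\ell\chi_\ell\hat F_\ell$. At every $x$ the active $\hat F_\ell$'s share the common 2-jet $\Xi_x(f,2M)$, so $\hat F_\ell-\hat F_{\ell'}$ vanishes to second order at $x$, and $F$ inherits the $\ct(\rt)$-norm bound up to a constant depending only on the partition; nonnegativity survives as a convex combination, and $F|_E=f$ follows from $\hat F_\ell(x)=g_\ell(x)=f(x)$ for every active $\ell$ at $x\in E$ (using $x\in S(x)\subseteq S_\ell$). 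The technical point requiring the most care is the bounded-overlap property of the augmented cover, which I expect to follow from the CZ-type geometry underlying Theorem~\ref{thm.bd-alg}(C), ensuring that the augmentation preserves both $L\le C_4N$ and the $C_1N\log N$/$C_2N$ computational budget.
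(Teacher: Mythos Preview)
Your approach is genuinely different from the paper's, and the difference is instructive. The paper does \emph{not} try to enumerate regions of constancy of $S(x)$ or patch local extensions by a partition of unity. Instead it invokes the Callahan--Kosaraju well-separated pairs decomposition (Lemma~\ref{lem.wspd}): one obtains representative pairs $(x_\ell',x_\ell'')\in E\times E$, $\ell=1,\dots,L\le CN$, sets $S_\ell:=\{x_\ell',x_\ell''\}\cup S(x_\ell')\cup S(x_\ell'')$, and then, with $M$ and the auxiliary data $g_\ell$ defined exactly as you do, assigns to each $x\in E$ the one-jet $P^x:=\jet_x\bigl(\Xi_x(g_{\rho(x)},2M)\bigr)$. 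The depth property makes this assignment independent of the choice of $\rho$ (Claim~\ref{claim.ell}), and a representative-pairs compatibility lemma (Lemma~\ref{lem.wf-rep}, a tweak of Lemma~3.1 in \cite{F09-Data-3}) shows $\|\vec P\|_{W^2_+(E)}\le CM$; the Taylor--Whitney correspondence (Lemma~\ref{lem.WT}(B)) then yields $F$. This uses Theorem~\ref{thm.bd-alg} purely through its query interface: one only ever needs $S(x)$ at the $2L\le CN$ points $x_\ell',x_\ell''$.

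Your plan, by contrast, needs more than Theorem~\ref{thm.bd-alg} actually provides. Theorem~\ref{thm.bd-alg}(C) gives a query oracle, not an enumeration of the regions on which the efficient representation of $\Xi_x$ is constant; obtaining such an enumeration requires opening up the proof (the CZ cells $\Lambda_0$, the one-dimensional strips inside $\Lambda^{\sharp\sharp}$-cells, the $\mu$-assignments for $\Lambda_{\rm empty}$, etc.). More seriously, the step you flag as ``requiring the most care'' is a genuine gap: to have $S(x)\subseteq S_\ell$ for \emph{every} $x$ in the support of $\chi_\ell$ you must set $S_\ell:=\bigcup_{x\in\mathrm{supp}\,\chi_\ell}S(x)$, and bounding $\#(S_\ell)$ then requires that each enlarged region meet only boundedly many regions of constancy of $S(\cdot)$. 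But those regions include thin strips inside $(1+c_G)Q$ determined by the projected points $Proj_{u_Q^\perp}(E\cap(1+c_G)Q)$, and across a cell boundary the projection directions $u_Q^\perp$ and $u_{Q'}^\perp$ differ; a single enlarged strip in $Q$ can therefore meet many strips in a neighboring $Q'$, and nothing in the CZ geometry alone rules this out. The paper sidesteps this entirely by never attempting a spatial patching: the Callahan--Kosaraju machinery reduces the Whitney compatibility check to $O(N)$ representative pairs, and the single global extension comes from $T_w^E$ rather than from gluing the $\hat F_\ell$.
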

	
	Theorem \ref{thm.sfp} without condition (A) is the same as Theorem 5 in \cite{JL20}.
	
	In this paper, we will prove Theorem \ref{thm.sfp} via Theorem \ref{thm.bd-alg}. Our approach yields an alternate proof of Theorem 5 in \cite{JL20}. The list of subsets $ \set{S_\ell: \ell = 1, \cdots, L} $ that arises in this paper may be different from that in Theorem 5 of \cite{JL20}. It will be interesting to understand the relationship between them. 
	
	Using Theorem \ref{thm.sfp}, we can produce Algorithm \ref{alg.norm}, solving Problem \ref{prob.norm}.

	\begin{algorithm}[H]
		\caption{Nonnegative $ C^2(\R^2) $ Interpolation Algorithm - Trace Norm}\label{alg.norm}
		
		\begin{itemize}
			\item[] \textbf{DATA:} $ E \subset \R^2 $ finite with $ \#(E) = N $.
			\item[] \textbf{QUERY:} $f: E \to 
			\pos $.
			\item[] \textbf{RESULT}: The order of magnitude of $ \norm{f}_{C^2_+(E)} $. More precisely, the algorithm outputs a number $ M \geq 0 $ such that both of the following hold.
			\begin{itemize}
				\item We guarantee the existence of a function $ F \in C^2_+(\R^2) $ such that $ F|_E = f $ and $ \norm{F}_{C^2(\R^2)} \leq CM $.
				\item We guarantee there exists no $ F \in C^2_+(\R^2) $ with norm at most $ M $ satisfying $ F|_E = f $.
			\end{itemize}
			
			\item[] \textbf{COMPLEXITY:} 
			\begin{itemize}
				\item Preprocessing $ E $: at most $ CN\log N $ operations and $ CN $ storage.
				\item Answer query: at most $ CN $ operations.
			\end{itemize} 
		\end{itemize}
		
	\end{algorithm}

	Using Theorem \ref{thm.bd-alg}, we can produce Algorithm \ref{alg.interpolant}, solving Problem \ref{prob.interpolant}.
	
	\begin{algorithm}[H]
		\caption{Nonnegative $ C^2(\R^2) $ Interpolation Algorithm - Interpolant}\label{alg.interpolant}
		\begin{itemize}
			\item[] \textbf{DATA:} $ E \subset \R^2 $ finite with $ \#(E) = N $. $ f: E \to \pos $. $ M \geq 0 $. 
			\item[] \textbf{ORACLE:} $ \norm{f}_{C^2_+(E)} \leq M $. 
			
			\item[] \textbf{RESULT}: A query function that accepts a point $ x \in \mathbb{R}^2 $ and produces a list of numbers $ ( f_\alpha(x) : \abs{\alpha} \leq 2) $ that guarantees the following: There exists a function $ F \in C^2_+(\R^2) $ with $ \norm{F}_{C^2(\R^2)} \leq CM$ and $ {F}|_E = f $, such that $ \d^\alpha F(x) = f_\alpha(x) $ for $ \abs{\alpha} \leq 2 $. The function $ F $ is independent of the query point $ x $, and is uniquely determined by $ (E,f,M) $.
			
			\item[] \textbf{COMPLEXITY:} 
			
			\begin{itemize}
				\item Preprocessing $ E $: at most $ CN\log N $ operations and $ CN $ storage.
				
				\item Answer query: at most $ C\log N $ operations.
			\end{itemize}
			
		\end{itemize}

	\end{algorithm}

	Theorem \ref{thm.bd-alg} also yields Algorithm \ref{alg.dependence} for computing the representative sets $S_{\ell}$ in Theorem \ref{thm.sfp}.

	\begin{algorithm}[H]
		\caption{Nonnegative $ C^2(\R^2) $ Interpolation Algorithm - Representative Sets}\label{alg.dependence}
		\begin{itemize}
			\item[] \textbf{DATA:} $ E \subset \R^2 $ finite with $ \#(E) = N $. 
			
			\item[] \textbf{RESULT}: A query (set-valued) function that accepts a point $ x \in \mathbb{R}^2 $ and produces a subset $ S(x) \subset E $, where $ S(x) $ agrees with that in Remark \ref{rem.Sx}. 
			
			\item[] \textbf{COMPLEXITY:} 
			
			\begin{itemize}
				\item Preprocessing $ E $: at most $ CN\log N $ operations and $ CN $ storage.
				
				\item Answer query: at most $ C\log N $ operations.
			\end{itemize}
			
		\end{itemize}

	\end{algorithm}

	To see how to produce Algorithm \ref{alg.dependence} from Theorem \ref{thm.bd-alg}, we simply note that each map $ \Xi_x $ in Theorem \ref{thm.bd-alg} is stored in its efficient representation (see Definition \ref{def.depth}). Thus, the set $ S(x) $ is given by the corresponding set of indices in the efficient representation of $ \Xi_x $.

	\begin{acknowledgment}
		We are indebted to 
		Jes\'us A. De Loera, Charles Fefferman, Kevin O'Neill, Naoki Saito, and Pavel Shvartsman, for their valuable comments. We also thank all the participants in the 11th Whitney workshop for fruitful discussions, and Trinity College Dublin for hosting the workshop. 
		
		This project is supported by NSF Grant DMS-1554733. The first author is supported by the UC Davis Summer Graduate Student Researcher Award and the Alice Leung Scholarship in Mathematics. The second author is supported by the UC Davis Chancellor's Fellowship. 
	\end{acknowledgment}

	\section{Preliminaries}

	\newcommand{\touch}{\leftrightarrow}
	
	We use $ c_*, C_*, C' $, etc., to denote universal constants. They may be different quantities in different occurrences. We will label them to avoid confusion when necessary.
	
	We assume that we are given an ordered orthogonal coordinate system on $ \rt $, specified by a pair of unit vectors $ [e_1,e_2] $. We use $ \abs{\,\cdot\,} $ to denote Euclidean distance. We use $ B(x,r) $ to denote the disk of radius $ r $ centered at $ x $. For $ X, Y \subset \rt $, we write $ \dist{X}{Y} := \inf_{x \in X, y \in Y}\abs{x - y} $. 
	
	We use $ \alpha = (\alpha_1, \alpha_2),\beta = (\beta_1, \beta_2) \in \mathbb{N}_0^2 $, etc., to denote multi-indices. We write $ \d^\alpha $ to denote $ \d_{e_1}^{\alpha_1}\d_{e_2}^{\alpha_2} $. We adopt the partial ordering $ \alpha \leq \beta $ if and only if $ \alpha_i \leq \beta_i $ for $ i = 1,2 $. 
	
	By a square, we mean a set of the form $ Q = [a,a+\delta) \times [b,b+\delta) $ for some $ a,b \in \R $ and $ \delta > 0 $. If $ Q $ is a square, we write $ \dq $ to denote the sidelength of the square. For $ \lambda > 0 $, we use $ \lambda Q $ to denote the square whose center is that of $ Q $ and whose sidelength is $ \lambda\dq $. Given two squares $ Q, Q' $, we write $ Q\touch Q' $ if $ closure(Q) \cap closure(Q') \neq \void $.
	
	A dyadic square is a square of the form $ Q = [2^k \cdot i, 2^k\cdot (i + 1) ) \times [2^k\cdot j, 2^k\cdot (j + 1)) $ for some $ i,j,k \in \mathbb{Z} $. Each dyadic square $ Q $ is contained in a unique dyadic square with sidelength $ 2\dq $, denoted by $ Q^+ $.

	Let $ \Omega \subset \Rn $ be a set with nonempty interior $ \Omega_0 $ such that $ \Omega \subset \overline{\Omega}_0 $. For nonnegative integers $ m,n $, we use $ C^m(\Omega) $ to denote the vector space of $ m $-times continuously differentiable real-valued functions up to the closure of $ \Omega $, whose derivatives up to order $ m $ are bounded. For $ F \in C^m(\Omega) $, we define
	\begin{equation*}
		\norm{F}_{C^m(\Omega)} := \sup_{x \in \Omega_0} \max_{\abs{\alpha}\leq m}\abs{\d^\alpha F(x)}.
	\end{equation*}
	We write $ C^m_+(\Omega) $ to denote the collection of functions $ F \in C^m(\Omega) $ such that $ F \geq 0 $ on $ \Omega $. 
	
	Let $ E \subset \Rn $ be finite. We define the following.
	\begin{equation*}
		\begin{split}
			C^m(E) := \set{f:E \to \R} \cong \R^{\#(E)}
			\enskip&\text{ and }\enskip
			\norm{f}_{C^m(E)}  := \inf\set{\norm{F}_{C^m(\Rn)} : F |_E = f};\\
			C^m_+(E) := \set{f:E \to \pos} \cong \pos^{\#(E)}
			\enskip&\text{ and }\enskip
			\norm{f}_{C^m_+(E)} := \inf\set{\norm{F}_{C^m(\Rn)} : F|_E = f
				\text{ and }F\geq 0}\,.
		\end{split}
	\end{equation*}

	\subsection{Polynomials and Whitney fields}
	
	\newcommand{\ring}{\mathcal{R}}

	We write $ \P $ to denote the vector space of affine polynomials on $ \rt $. It is a three-dimensional vector space. We use $ \P^+ $ to denote the vector space of polynomials in $ \rt $ with degree no greater than two. It is a six-dimensional vector space.

	For $ x \in \R^2 $ and a function $ F $ twice continuously differentiable at $ x $, we write $ \jet_x F$, $\jet_x^+F $ to denote the one-jet, two-jet of $ F $ at $ x $, respectively, which we identify with the degree-one, degree-two Taylor polynomials, respectively, 
	\begin{equation}
		\begin{split}
			\jet_x F(y) &:= \sum_{\abs{\alpha} \leq 1}\frac{\d^\alpha F(x)}{\alpha !}(y-x)^\alpha, \text{ and }\\
			\jet_x^+ F(y) &:= \sum_{\abs{\alpha} \leq 2}\frac{\d^\alpha F(x)}{\alpha !}(y-x)^\alpha.
		\end{split}
		\label{eq.jet-def}
	\end{equation}
	We use $ \ring_x$, $\ring^+_x $ to denote the rings of one-jets, two-jets at $ x $, respectively. The multiplications on $ \ring_x $ and $ \ring_x^+ $ are defined in the following way:
	\begin{equation*}
		P\odot_x R := \jet_x(PR)
		\text{ and }
		P^+\odot_x^+R^+:= \jet_x^+(P^+R^+),
	\end{equation*}
	for $ P,R \in \ring_x $ and $ P^+, R^+ \in \ring_x^+ $.

	\newcommand{\wt}{W^2}
	\newcommand{\wtp}{W^2_+}
	Let $ S \subset \Rn $ be a nonempty finite set. A \underline{Whitney field} on $ S $ is an array of polynomials
	\begin{equation*}
		\vec{P} := (P^x)_{x \in S},
		\enskip\text{ where }
		\enskip 
		P^x \in \ring_x
		\text{ for each } x \in S\,.
	\end{equation*}
	Given $ \vec{P} = (P^x)_{x \in S} $, we sometimes use the notation
	\begin{equation*}
		(\vec{P},x) := P^x
		\for x \in S\,.
	\end{equation*}
	We write $ \wt(S) $ to denote the vector space of all Whitney fields on $ S $. For $ \vec{P} = (P^x)_{s \in S} \in \wt(S) $, we define
	\begin{equation*}
		\norm{\vec{P}}_{\wt(S)} := \max_{x \in S, \abs{\alpha} \leq 1}\abs{\d^\alpha P^x(x)} +  \max_{\substack{x, y \in S,\,x \neq y,\,\abs{\alpha}\leq 1}}\frac{\abs{\d^\alpha(P^x - P^y)(x)}}{\abs{x - y}^{2-\abs{\alpha}}}.
	\end{equation*}
	We note that $ \norm{\cdot}_{\wt(S)} $ is a norm on $ \wt(S) $. 
	
	We write $ \wtp(S) $ to denote a subcollection of $ \wt(S) $, such that $ \vec{P} \in W^2_+(S) $ if and only if for each $ x \in S $, there exists some $ M_x \geq 0 $ such that 
	\begin{equation}
		(\vec{P},x)(y) + M_x\abs{y - x}^2 \geq 0
		\text{ for all } y \in \rt\,.
		\label{eq.MP}
	\end{equation}
	For $ \vec{P} \in \wtp(S) $, we define
	\begin{equation*}
		\norm{\vec{P}}_{\wtp(S)} := \norm{\vec{P}}_{\wt(S)} + \max_{x \in S} \brac{\inf \set{ M_x \geq 0 : (\vec{P},x)(y) + M_x\abs{y-x}^2 \geq 0 \, \text{ for all } y \in \rt }}.
	\end{equation*}
	
	The next lemma is a Taylor-Whitney correspondence for $ \ctp(\rt) $. (A) is simply Taylor's theorem. See \cite{JL20,FIL16+} for a proof of (B). 
	
	\begin{lemma}\label{lem.WT}
		There exists a universal constant $ C_w $ such that the following holds.
		
		Let $ E \subset \rt $ be a finite set. 
		
		\begin{enumerate}[(A)]
			\item Let $ F \in \ctp(\rt) $. Let $ \vec{P} := (\jet_x F)_{x \in E} $. Then $ \vec{P} \in \wtp(E) $
			and
			$ \norm{\vec{P}}_{\wtp(E)} \leq C_w\norm{F}_{\ct(\rt)} $. 
			\item There exists a map $ T_w^E : \wtp(E) \to \ctp(\rt) $ such that $ \norm{T_w^E(\vec{P})}_{\ct(\rt)} \leq C_w\norm{\vec{P}}_{\wtp(E)} $ and $ \jet_x T_w^E(\vec{P}) = (\vec{P},x) $ for each $ x \in E $. 
		\end{enumerate}
	\end{lemma}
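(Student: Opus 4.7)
Part (A) follows directly from Taylor's theorem. Fix $ F \in \ctp(\rt) $ with $ \norm{F}_{\ct(\rt)} \leq M $. The pointwise bounds $ \abs{\d^\alpha (\vec{P},x)(x)} = \abs{\d^\alpha F(x)} \leq M $ for $ \abs{\alpha} \leq 1 $ are immediate. For the Whitney compatibility terms in $ \norm{\cdot}_{\wt(E)} $, I fix $ x \neq y \in E $: the $ \abs{\alpha}=0 $ increment $ (\jet_x F - \jet_y F)(x) = F(x) - F(y) - \grad F(y)\cdot(x-y) $ is the Taylor remainder of $ F $ at $ y $ evaluated at $ x $, hence bounded by $ CM\abs{x-y}^2 $, and the $ \abs{\alpha}=1 $ increment $ \d^\alpha F(x) - \d^\alpha F(y) $ is bounded by $ M\abs{x-y} $ via the mean value theorem. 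For the additional $ \wtp(E) $ contribution I would take $ M_x := CM $: since $ F \geq 0 $ and Taylor gives $ \abs{F(y) - \jet_x F(y)} \leq CM\abs{y-x}^2 $, we conclude $ \jet_x F(y) + M_x\abs{y-x}^2 \geq F(y) \geq 0 $ on $ \rt $.

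Part (B) is a nonnegative Whitney extension theorem, and my plan follows \cite{JL20,FIL16+}. Given $ \vec{P} \in \wtp(E) $ with $ \norm{\vec{P}}_{\wtp(E)} \leq M $, I first select, for each $ x \in E $, a constant $ M_x \leq M $ realizing (\ref{eq.MP}). This promotes the one-jet $ (\vec{P},x) $ to a globally nonnegative quadratic $ P^{x,+}(y) := (\vec{P},x)(y) + M_x\abs{y-x}^2 \in \P^+ $ whose one-jet at $ x $ is still $ (\vec{P},x) $. I then form a Whitney cube decomposition $ \set{Q_\nu} $ of $ \rt\setminus E $, assign to each $ Q_\nu $ a representative $ x_\nu \in E $ with $ \dist{x_\nu}{Q_\nu} $ comparable to $ \delta_{Q_\nu} $, and glue the local pieces via a $ C^2 $ partition of unity $ \set{\theta_\nu} $ subordinate to a mild dilation of $ \set{Q_\nu} $, defining $ T_w^E(\vec{P}) := \sum_\nu \theta_\nu \, P^{x_\nu,+} $. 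Since $ \theta_\nu \geq 0 $ and $ P^{x_\nu,+} \geq 0 $ everywhere, the resulting function is nonnegative summand by summand.

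The main obstacle is to control the $ \ct(\rt) $ norm and to verify the jet-matching $ \jet_x T_w^E(\vec{P}) = (\vec{P},x) $ at each $ x \in E $. Derivatives of the partition of unity grow like $ \delta_{Q_\nu}^{-\abs{\alpha}} $, so when differentiating I would replace each local piece $ P^{x_\nu,+} $ by the difference $ P^{x_\nu,+} - P^{x_\mu,+} $ across overlapping pairs of cubes; the Whitney compatibility bound in $ \norm{\cdot}_{\wt(E)} $ together with the uniform estimate $ M_x \leq M $ then absorbs the $ \delta_{Q_\nu}^{-2} $ blowup. Near a point $ x \in E $, the relevant cubes have sidelength of order $ \abs{y-x} $, so the correction $ P^{x,+} - (\vec{P},x) = M_x\abs{y-x}^2 $ vanishes to first order at $ x $, preserving the jet-matching. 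These details are carried out in \cite{JL20,FIL16+}.
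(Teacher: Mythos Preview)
Your proposal is correct and matches the paper's treatment: the paper does not give its own proof, stating only that (A) ``is simply Taylor's theorem'' and referring to \cite{JL20,FIL16+} for (B), which is precisely the nonnegative Whitney extension construction (promoting each one-jet to the nonnegative quadratic $P^{x,+}$ and gluing over a Whitney decomposition) that you outline. Your sketch of both parts is accurate and complete at the level of detail appropriate here.
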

	
	\subsection{Trace norm on small subsets}
	\label{sect:lasso}
	
	Let $ S \subset \rt $ be a finite set. We define the following two functions.
	
	\begin{equation}
		\begin{split}
			\Q = \Q_S : \wt(S) &\to \pos\\
			\vec{P} = (P^x)_{x \in S} &\mapsto \sum_{\substack{x \in S\\\abs{\alpha}\leq 1}}\abs{\d^\alpha P^x(x)} + \sum_{\substack{x, y \in S\\x \neq y\\\abs{\alpha}\leq 1}} { \frac{\abs{\d^\alpha (P^x - P^y)(x)}}{\abs{x - y}^{2-\abs{\alpha}}} }\,,
		\end{split}
		\label{eq.Q-def}
	\end{equation}
	and
	
	\newcommand{\M}{\mathcal{M}}
	
	\begin{equation}
		\begin{split}
			\M = \M_S : \wt(S) &\to [0,\infty]\\
			\vec{P} = (P^x)_{x \in S} &\mapsto \begin{cases}
				\sum\limits_{x \in S} \frac{\abs{\grad P^x}^2}{P^x(x)} &\text{ if $ P^x(x) \geq 0 $ for each $ x \in S $}\\
				\infty &\text{ if there exists $ x \in S $ such that $ P^x(x) < 0 $}
			\end{cases}\,.
		\end{split}
		\label{eq.M-def}
	\end{equation}
	
	In \eqref{eq.M-def}, we use the conventions that $ \frac{0}{0} = 0 $ and $ \frac{a}{0} = \infty $ for $ a > 0 $.

	\begin{lemma}\label{lem.Q+M}
		Let $ S \subset \rt $ be a finite set with $ \#(S) \leq N_0 $ for some universal constant $ N_0 $. Let $ \Q $ and $ \M $ be as in \eqref{eq.Q-def} and \eqref{eq.M-def}. Then there exists a universal constant $ C $ such that
		\begin{equation}
			C^{-1}\norm{\vec{P}}_{\wtp(S)} \leq ({\Q} + \M)(\vec{P}) \leq C\norm{\vec{P}}_{\wtp(S)}
			\text{ for all }
			\vec{P} \in \wtp(S).
			\label{eq.lem.3.1}
		\end{equation}
		Moreover, $ \vec{P} \in \wt(S)\setminus\wtp(S) $ if and only if $ \M(\vec{P}) = \infty $.

	\end{lemma}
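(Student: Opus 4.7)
The plan is to decompose the proof into two parallel equivalences — $\Q(\vec{P}) \sim \|\vec{P}\|_{\wt(S)}$ and $\M(\vec{P}) \sim \max_{x \in S} m_x$, where
\begin{equation*}
m_x := \inf\set{M_x \geq 0 : (\vec{P},x)(y) + M_x\abs{y-x}^2 \geq 0 \text{ for all } y \in \rt}
\end{equation*}
is the nonnegativity penalty appearing in the definition of $\|\vec{P}\|_{\wtp(S)}$ — and then simply add them. This works because by the definition of $\|\cdot\|_{\wtp(S)}$ we have $\|\vec{P}\|_{\wtp(S)} = \|\vec{P}\|_{\wt(S)} + \max_{x \in S} m_x$.

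The first equivalence is immediate from the cardinality bound $\#(S) \leq N_0$: $\Q$ and $\|\cdot\|_{\wt(S)}$ are built from the same family of quantities, with sums in the former replacing maxima in the latter, and the sums run over at most $O(N_0^2)$ terms. Hence $\|\vec{P}\|_{\wt(S)} \leq \Q(\vec{P}) \leq C(N_0)\|\vec{P}\|_{\wt(S)}$ with $C(N_0)$ universal.

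The second equivalence is the main content. I would compute $m_x$ explicitly. Since $P^x \in \ring_x$ is affine, the map $y \mapsto P^x(y) + M_x\abs{y-x}^2$ is a convex quadratic in the variable $y-x$ whose minimum over $\rt$, obtained by completing the square, equals $P^x(x) - \abs{\grad P^x}^2/(4 M_x)$ when $M_x > 0$. Thus \eqref{eq.MP} holds iff $4 M_x P^x(x) \geq \abs{\grad P^x}^2$, and solving for the smallest admissible $M_x$ yields $m_x = \abs{\grad P^x}^2/(4 P^x(x))$, interpreted via the conventions $0/0 = 0$ and $a/0 = \infty$ for $a > 0$. The degenerate cases fall out by direct inspection: when $P^x(x) = 0$ and $\grad P^x = 0$, $M_x = 0$ already satisfies the inequality, so $m_x = 0$; when $P^x(x) = 0$ with $\grad P^x \neq 0$, or when $P^x(x) < 0$, no finite $M_x$ works, so $m_x = \infty$. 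Comparing with \eqref{eq.M-def} and invoking $\#(S) \leq N_0$ once more to trade sum for maximum gives $\max_{x \in S} m_x \sim \M(\vec{P})$, with both sides finite or infinite simultaneously.

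Summing the two equivalences yields \eqref{eq.lem.3.1}. The moreover statement then follows at once: $\vec{P} \in \wtp(S)$ iff every $m_x$ is finite iff $\M(\vec{P}) < \infty$, so $\vec{P} \in \wt(S)\setminus\wtp(S)$ iff $\M(\vec{P}) = \infty$. The whole argument is elementary, essentially one application of completing the square plus comparability of $\ell^1$ and $\ell^\infty$ in bounded dimension; the only point requiring care is lining up the conventions $0/0 = 0$ and $a/0 = \infty$ with the boundary cases of the square-completion, so no substantive obstacle arises.
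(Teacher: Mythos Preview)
Your proof is correct and follows essentially the same approach as the paper's: both rely on the $\ell^1$--$\ell^\infty$ comparability in bounded dimension to handle $\Q$ versus $\|\cdot\|_{\wt(S)}$, and on the discriminant/completing-the-square computation to relate $\M$ to the nonnegativity penalty. Your version is arguably cleaner in that you compute $m_x = \abs{\grad P^x}^2/(4P^x(x))$ explicitly and recognize $\M(\vec{P}) = 4\sum_x m_x$, whereas the paper argues the two inequalities $(\Q+\M)(\vec{P}) \lesssim \|\vec{P}\|_{\wtp(S)}$ and $\|\vec{P}\|_{\wtp(S)} \lesssim (\Q+\M)(\vec{P})$ separately; but the mathematical content is identical.
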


	\begin{proof}
		
		We write $ C, C' $, etc., to denote universal constants. 
		
		Fix $ \vec{P} = (P^x)_{x \in S} \in \wtp(S) $.

		Suppose $ ({\Q}+\M)(\vec{P}) \leq M $. We want to show that
		\begin{equation}
			\norm{\vec{P}}_{\wtp(S)} \leq CM.
			\label{eq.2.2.6}
		\end{equation}

		Since each summand in the definition of $ \Q $ in \eqref{eq.Q-def} is nonnegative, we have
		\begin{equation}
			\max\limits_{x \in S,\abs{\alpha} \leq 1}\abs{\d^\alpha P^x(x)} \leq CM, \text{ and }
			\max\limits_{x,y \in S, x \neq y, \abs{\alpha} \leq 1}\frac{\abs{\d^\alpha(P^x - P^y)(x)}}{\abs{x - y}^{2-\abs{\alpha}}} \leq CM.
			\label{eq.2.2.4}
		\end{equation}
		
		Since $ \M(\vec{P}) \leq M $, we have
		\begin{equation}
			\abs{\nabla P^x}^2 \leq MP^x(x)
			\for x \in S.
			\label{eq.2.2.7}
		\end{equation}
		Therefore, we have
		\begin{equation}\label{eq.2.2.5}
			P^x(y) + \frac{M}{4}\abs{y-x}^{2} \geq 0
			\text{ for all } y \in \rt, x \in S.
		\end{equation}
		By the definition of $ \norm{\cdot}_{\wtp(S)} $, we see that \eqref{eq.2.2.6} follows from \eqref{eq.2.2.4} and \eqref{eq.2.2.5}.

		Suppose $ \norm{\vec{P}}_{\wtp(S)} \leq M $. We want to show that 
		\begin{equation}
			({\Q}+\M)(\vec{P}) \leq CM.
			\label{eq.2.2.1}
		\end{equation}
		
		By the definition of $ \norm{\cdot}_{\wtp(S)} $, we know that
		\begin{equation}\label{eq.2.6}
			\max\limits_{x \in S,\abs{\alpha} \leq 1}\abs{\d^\alpha P^x(x)} \leq M,\quad
			\max\limits_{x,y \in S, x \neq y, \abs{\alpha} \leq 1}\frac{\abs{\d^\alpha(P^x - P^y)(x)}}{\abs{x - y}^{2-\abs{\alpha}}}\leq M, \text{ and }
		\end{equation}
		\begin{equation}\label{eq.2.7}
			P^x(y) + M\abs{y-x}^{2} \geq 0
			\text{ for all } y \in \rt, x \in S.
		\end{equation}
		It follows from \eqref{eq.2.6} that 
		\begin{equation}
			\Q(\vec{P}) \leq CM^2.
			\label{eq.2.2.2}
		\end{equation}
		For each $ x \in S $, restricting $ P^x $ to each line in $ \rt $ passing through $ x $ and computing the discriminant, we can conclude from \eqref{eq.2.7} that
		\begin{equation}
			\abs{\nabla P^x}^2 \leq C M P^x(x)
			\for x \in S.
			\label{eq.2.8}
		\end{equation}
		It follows from \eqref{eq.2.8} that 
		\begin{equation}
			\M(\vec{P}) \leq CM.
			\label{eq.2.2.3}
		\end{equation}
		(Recall that we use the convention $ \frac{0}{0} = 0 $). \eqref{eq.2.2.1} then follows from \eqref{eq.2.2.2} and \eqref{eq.2.2.3}.

		This proves \eqref{eq.lem.3.1}.
		
		Now we turn to the second statement. 
		
		Suppose $ \vec{P} \in \wt(S) $ is such that $ \M(\vec{P}) = \infty $. Then at least one of the following holds:
		\begin{itemize}
			\item $ P^x(x) < 0 $ for some $ x \in S $, in which case condition \eqref{eq.MP} fails for such $ P^x $, so $ \vec{P} \notin \wtp(S) $.
			\item There exists $ x \in S $ such that $ P^x(x) = 0 $ but $ \nabla P^x \neq 0 $, in which case condition \eqref{eq.MP} fails for such $ P^x $, so $ \vec{P} \notin \wtp(S) $. 
		\end{itemize}
		In conclusion, we have $ \vec{P} \notin \wtp(S) $.
		
		Conversely, suppose $ \vec{P} \notin \wtp(S) $. Then there exists $ x \in S $ such that condition \eqref{eq.MP} fails for $ P^x $. This means that either $ P^x(x) < 0 $, or $ P^x(x) = 0 $ but $ \nabla P^x \neq 0 $. In either case, we have $ \M(\vec{P}) = \infty $. 
		
		Lemma \ref{lem.Q+M} is proved.

	\end{proof}

	\newcommand{\rnz}{{\R^{d}}}
	\newcommand{\wts}{\wt(S)}
	\newcommand{\Af}{{\mathbb{A}_f}}
	
	For the rest of the subsection, we fix a finite set $ S \subset \rt $ with $ \#(S) \leq N_0 $, where $ N_0 $ is a universal constant. We also fix a function $ f : S \to \pos $. We explain how to compute the order of magnitude of $ \norm{f}_{\ctp(S)} $.
	
	We adopt the following notation: For $ A , B \geq 0 $, we write $ A \approx B $ if there exists a universal constant $ C $ such that $ C^{-1}A \leq B \leq CA $.

	We define an affine subspace $ \Af \subset \wts $ by
	\begin{equation*}
		\begin{split}
			\Af &:= \set{\vec{P}= (P^x)_{x \in S} \in \wts : P^x(x) = f(x)
				\text{ and }
				f(x) = 0 \Rightarrow
				\nabla P^x = 0
				\for x \in S}\\
			&= \set{\vec{P}= (P^x)_{x \in S} \in \wtp(S) : P^x(x) = f(x) \for x \in S}.
		\end{split}
	\end{equation*}
	Note that $ \Af $ has dimension $ 2\cdot(\#(S)-\#(f^{-1}(0)) $.
	
	Let $ \Q $ and $ \M $ be as in \eqref{eq.Q-def} and \eqref{eq.M-def}. By Lemma \ref{lem.WT} and Lemma \ref{lem.Q+M}, 
	\begin{equation}
		\norm{f}_{\ctp(S)} \approx \inf \set{(\Q+\M)(\vec{P}) : \vec{P} \in \Af}.
		\label{eq.3.2.1}
	\end{equation}

	Let $ d := \dim W^2(S) = \#(S)\cdot\dim\P = 3\#(S) $. We identify $ W^2(S) \cong \R^d $ via $ (P^x)_{x \in S} \mapsto \brac{P^x(x), \d_{e_1}P^x, \d_{e_2}P^x}_{x \in S} $. We define the $ \ell^1 $ and $ \ell^2 $-norms, respectively, on $ \rnz $ by the formulae
	\begin{equation*}
	\norm{v}_{\ell^1}:= \sum_{i = 1}^{d}\abs{v_i}
	\text{ and }
	\norm{v}_{\ell^2}:= \brac{\sum_{i = 1}^{d}\abs{v_i}^2}^{1/2}
	\quad\for\quad v = (v_1, \cdots, v_{d}) \in \rnz.
	\end{equation*}

	Consider the following objects.

	\begin{itemize}

		\item Let $ L_w : W^2(S) \to \R^d $ be a linear isomorphism that maps $ \vec{P} \in \wts $ to the vector in $ \R^{d} $ with components
		\begin{equation*}
			\frac{\d^\alpha(P^y - P^z)(y)}{\abs{y-z}^{2-\abs{\alpha}}}
			\enskip,\enskip
			\d^\alpha P^{x_S}(x_S)\quad,\quad \abs{\alpha} \leq 1
		\end{equation*}
		for suitable $ x_S, y, z \in S $ in some order, so that
		\begin{equation}
			\eqindent
			\norm{L_w(\vec{P})}_{\ell^1(\R^{d_S})} \approx \Q({\vec{P}} )
			\quad \for \vec{P} \in \wts
			\label{eq.3.2.3}
		\end{equation}
		The construction of such $ L_w $ is based on the technique of \qt{clustering} introduced in \cite{BM07}. See Remark 3.3 of \cite{BM07}. Since $ \#(S) $ is universally bounded, we can compute $ L_w $ from $ S $ using at most $ C $ operations.

		\item Let $ V_f \subset W^2(S) $ be a subspace defined by
		\begin{equation*}
		V_f := \set{ (P^x)_{x \in S} : P^x(x) = 0 \for x \in S\setminus f^{-1}(0) \text{ and } P^x \equiv 0 \for x \in f^{-1}(0) }.
		\end{equation*}
		Let $ \Pi_f = (\Pi_f^x)_{x \in S} : W^2(S) \to V_f $ be the natural projection defined by
		\begin{equation*}
		\Pi_f^x(P^x) = (0,\d_{e_1}P^x, \d_{e_2}P^x).
		\end{equation*}
			
		Let $ \vp_f \in W^2(S) $ denote the vector
		\begin{equation*}
		\vp_f := \brac{f(x),0,0}_{x \in S}.
		\end{equation*}It is clear that $ \Af = \vp_f + V_f $.

		\item Let $ L_f = (L_f^x)_{x \in S} : W^2(S) \to W^2(S) $ be a linear endomorphism defined by $ L_f^x(P^x) = \frac{P^x}{\sqrt{f(x)}} $ for $ x \in S \setminus f^{-1}(0) $ and $ L_f^x \equiv 0 $ for $ x \in f^{-1}(0) $.
 		
	\end{itemize}

	We see that
	\begin{equation}
		\M(\vec{P}) \approx \norm{L_f\Pi_f(\vp)}^2_{\ell^2(\R^{d})}
		\quad \for \vec{P} \in \Af.
		\label{eq.3.2.5}
	\end{equation}

	Combining \eqref{eq.3.2.3} and \eqref{eq.3.2.5}, we see that
	\begin{equation}
		({\Q}+\M)(\vec{P}) \approx  \norm{L_f\Pi_f(\vp)}^2_{\ell^2(\R^{d})}+\norm{L_w(\vp)}_{\ell^1(\R^{d})}
		\quad \for \vec{P} \in \Af = \vp_f + V_f.
		\label{eq.3.2.6}
	\end{equation}
	
	Let $ \beta := L_w(\vp) $ and $ A:= (L_f\Pi_f)^T(L_f\Pi_f) $. We see from \eqref{eq.3.2.1} and \eqref{eq.3.2.6} that computing the order of magnitude of $ \norm{f}_{\ctp(S)} $ amounts to solving the following optimization problem:
	\begin{equation}
		\text{ Minimize }  \beta^t A \beta +\norm{\beta}_{\ell^1(\R^d)}
		\quad \text{ subject to } L_w^{-1}\beta \in \vp_f + V_f.
		\label{eq.qp}
	\end{equation}
	
	We note that \eqref{eq.qp} is a convex quadratic programming problem with affine constraint. We can find the exact solution to \eqref{eq.qp} by solving for the associated Karush-Kuhn-Tucker conditions, which consist of a bounded system of linear equalities and inequalities\cite{BV-CO}. Thus, we can compute the order of magnitude of $ \norm{f}_{\ctp(S)} $ in $ C $ operations. See Appendix \ref{app:QP} for details

	\subsection{Essential convex sets}

	\newcommand{\s}{\sigma}
	\newcommand{\sk}{\sigma^\sharp}
	\newcommand{\G}{\Gamma_+}
	\newcommand{\Gk}{\Gamma_+^\sharp}
	\newcommand{\Gt}{\widetilde{\Gamma}}
	\newcommand{\Gtk}{\widetilde{\Gamma}^\sharp}

	\begin{definition}\label{def.convex}
		Let $ E \subset \rt $ be a finite set. 
		
		\begin{itemize}
			\item For $ x \in \rt $, $ S \subset E $, and $ k \geq 0$, we define
			\begin{equation}
				\eqindent
				\begin{split}
					\s(x,S) &:= \set{\jet_x\phi : \phi \in \ct(\rt),\enskip
						\phi|_S = 0, \text{ and }
						\norm{\phi}_{\ct(\rt)} \leq 1}, \text{ and }\\
					\sk(x,k) &:= \bigcap_{S \subset E, \#(S) \leq k} \s(x,S).
				\end{split}
				\label{eq.sigma-def}
			\end{equation}
			
			\item Let $ f : E \to \pos $ be given. For $ x \in \rt $, $ S \subset E $, $ k \geq 0$, and $ M \geq 0 $, we define
			\begin{equation}
				\eqindent
				\begin{split}
					\G(x,S,M,f) &:= \set{\jet_x F : F \in \ctp(\rt),\enskip
						F|_S = f, \text{ and }
						\norm{F}_{\ct(\rt)} \leq M},\text{ and }\\
					\Gk(x,k,M,f) &:= \bigcap_{S \subset E, \#(S) \leq k}\G(x,S,M,f).
				\end{split}
				\label{eq.G-def}
			\end{equation}

		\end{itemize}

	\end{definition}

	Adapting the proof of the Finiteness Principle for nonnegative $ \ct(\rt) $ interpolation (Theorem 4 of \cite{JL20}), we have the following.
	\begin{lemma}\label{lem.fp-sigma}
		There exists a universal constant $ C $ such that the following holds. Let $ E \subset \rt $ be a finite set. Let $ \sigma $ and $ \sk $ be as in Definition \ref{def.convex}. Then for any $ x \in \rt $, 
		\begin{equation*}
			C^{-1}\cdot \sk(x,16) \subset \sigma(x,E) \subset C\cdot \sigma(x,16).
		\end{equation*}
	\end{lemma}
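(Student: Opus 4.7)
The right inclusion $\sigma(x,E) \subset \sk(x,16)$ holds with constant $1$: any $\phi \in \ct(\rt)$ satisfying $\phi|_E = 0$ and $\|\phi\|_{\ct(\rt)} \leq 1$ also satisfies $\phi|_S = 0$ for every $S \subset E$, so $\jet_x \phi \in \sigma(x,S)$ for each such $S$, and hence lies in $\sk(x,16)$. This is the easy half of the statement.

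For the reverse inclusion, fix $P \in \sk(x,16)$; the goal is to produce $\phi \in \ct(\rt)$ with $\phi|_E = 0$, $\jet_x \phi = P$, and $\|\phi\|_{\ct(\rt)} \leq C$. I would adapt the proof of the finiteness principle for \emph{nonnegative} $\ct(\rt)$ interpolation (Theorem~4 of \cite{JL20}), noting that the present setting is structurally simpler because no nonnegativity constraint is imposed. By hypothesis, for every $S \subset E$ with $\#(S) \leq 16$ there is a \emph{local witness} $\phi_S \in \ct(\rt)$ satisfying $\phi_S|_S = 0$, $\jet_x \phi_S = P$, and $\|\phi_S\|_{\ct(\rt)} \leq 1$; these serve as the building blocks.

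The construction proceeds via a Calder\'on--Zygmund / Whitney decomposition of $\rt$ into dyadic squares $\{Q_\nu\}$ adapted to $E$. For each $Q_\nu$ we select a ``keystone'' subset $S_\nu \subset E$ of bounded cardinality, chosen to capture the local geometry of $E$ near $Q_\nu$; the witness $\phi_{S_\nu}$ then supplies a local extension that vanishes on $S_\nu$ and already carries the correct jet $P$ at $x$. We glue the family $\{\phi_{S_\nu}\}$ using a smooth partition of unity subordinate to the Whitney cover, so that the global $\ct$-norm is controlled by the uniform bound on the local witnesses together with the standard jet-compatibility estimates between neighboring cubes.

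The main obstacle is the bookkeeping of the keystone selection: the cardinality of each $S_\nu$ must be kept small enough that, together with the three-dimensional jet datum $P$ at $x$, the total combinatorial complexity of each local problem stays within the budget $16$ imposed by the hypothesis $P \in \sk(x,16)$. The compatibility estimates between overlapping witnesses, which form the heart of the patching argument, follow the same template as in Theorem~4 of \cite{JL20} but with all nonnegativity-related arguments stripped away, reducing the proof to a routine application of Whitney's machinery once the keystone selection is fixed.
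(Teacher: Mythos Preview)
Your proposal is correct and matches the paper's approach: the paper does not give a self-contained proof of this lemma but simply states that it follows by adapting the proof of the Finiteness Principle for nonnegative $C^2(\mathbb{R}^2)$ interpolation (Theorem~4 of \cite{JL20}), which is precisely the route you outline. Your observation that the present $\sigma$-version is structurally simpler than the $\Gamma_+$-version (no nonnegativity constraint to carry through the patching) is accurate and worth recording.
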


	\subsection{Callahan-Kosaraju decomposition}

	We will use the data structure introduced by Callahan and Kosaraju\cite{CK95}.

	\begin{lemma}[Callahan-Kosaraju decomposition]\label{lem.wspd}
		Let $ E \subset \Rn $ with $ \#(E) = N < \infty $. Let $ \kappa > 0 $. We can partition $ E \times E \setminus diagonal(E) $ into subsets $ E_1'\times E_1'', \cdots, E_L'\times E_L'' $ satisfying the following.
		\begin{enumerate}[(A)]
			\item $ L \leq C(\kappa,n)N $.
			\item For each $ \ell = 1, \cdots, L $, we have
			\begin{equation*}
				\diam{E_\ell'}, \diam{E_\ell''} \leq \kappa \cdot\dist{E_\ell'}{E_\ell''}\,.
			\end{equation*}
			\item Moreover, we may pick $ x_\ell' \in E_\ell' $ and $ x_\ell'' \in E_\ell'' $ for each $ \ell = 1, \cdots, L $, such that the $ x_\ell', x_\ell'' $ for $ \ell = 1, \cdots, L $ can all be computed using at most $ C(\kappa,n)N\log N $ operations and $ C(\kappa,n)N $ storage.
		\end{enumerate}
		Here, $ C(\kappa,n) $ is a constant that depends only on $ \kappa $ and $ n $.
	\end{lemma}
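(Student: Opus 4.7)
\medskip

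\noindent\textbf{Proof proposal for Lemma \ref{lem.wspd}.} The plan is to follow the Callahan--Kosaraju construction, which produces a well-separated pair decomposition (WSPD) from a hierarchical decomposition of $E$ called a \emph{fair-split tree}. First I would build a fair-split tree $T$ on $E$: this is a binary tree whose leaves are in one-to-one correspondence with the points of $E$, and whose internal nodes $u$ are labeled by axis-aligned bounding rectangles $R_u$ enclosing a subset $E_u \subset E$. The tree is constructed recursively by repeatedly splitting the bounding rectangle of the current node along a hyperplane perpendicular to its longest side, placed so that (i) both children contain at least one point of $E_u$ and (ii) the aspect ratios of the children's enclosing boxes remain bounded. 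A standard priority-queue/bucket implementation (see \cite{CK95}) builds $T$ using $CN\log N$ operations and $CN$ storage, and yields $O(N)$ nodes. While building $T$, I would store at each node $u$ one representative point $x_u \in E_u$ (e.g. the first point encountered during construction); this costs only $O(1)$ extra work per node.

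Next I would run the standard recursive pair-decomposition procedure $\textsc{WSPD}(u,v)$, which calls itself on pairs of nodes of $T$. Declare $(u,v)$ \emph{$\kappa$-well-separated} if
\begin{equation*}
\max\bigl(\diam R_u,\, \diam R_v\bigr) \;\leq\; \kappa \cdot \dist{R_u}{R_v}.
\end{equation*}
If $(u,v)$ is well-separated, output the pair $(E_u, E_v)$ together with the representatives $(x_u, x_v)$. Otherwise assume without loss of generality that $\diam R_u \geq \diam R_v$ (break ties arbitrarily), and recurse on $(u_1,v)$ and $(u_2,v)$, where $u_1, u_2$ are the two children of $u$. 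The top-level call loops $\textsc{WSPD}$ over all sibling pairs in $T$ (equivalently, starts from $\textsc{WSPD}$ applied to the two children of each internal node of $T$), which together with the partitioning property of $T$ guarantees that each $(x,y) \in E\times E$ with $x \neq y$ lies in exactly one output pair $E'_\ell \times E''_\ell$.

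Conclusion (B) is immediate from the termination condition of $\textsc{WSPD}$. The representatives $x'_\ell = x_{u_\ell}$ and $x''_\ell = x_{v_\ell}$ at the moment the pair is emitted give (C), at $O(1)$ work per emitted pair. The main obstacle is the bound $L \leq C(\kappa,n) N$ in (A). The hard part is a charging argument: for a fixed tree node $u$, I would show that the number of nodes $v$ on which $\textsc{WSPD}(u,v)$ is called with $\diam R_v \leq \diam R_u$ is bounded by a constant depending only on $\kappa$ and $n$. This follows from a volumetric packing argument: all such $v$ whose recursive parents failed the separation test must have their bounding boxes meet a fixed dilate of $R_u$ and must have diameters comparable to $\diam R_u$ (using the bounded aspect ratio inherited from fair-splitting). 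A standard packing estimate in $\mathbb{R}^n$ then bounds the number of such $v$ by $C(\kappa,n)$. Summing over the $O(N)$ nodes of $T$ yields $L \leq C(\kappa,n) N$, and the overall running time of $\textsc{WSPD}$ is $O(L) = O(N)$ after the $CN\log N$ preprocessing for $T$, completing the proof.
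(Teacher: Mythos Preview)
Your sketch correctly reproduces the Callahan--Kosaraju construction from \cite{CK95}. Note, however, that the paper does not prove this lemma at all: it simply states the result and cites \cite{CK95}. So there is no ``paper's own proof'' to compare against; you have supplied what the paper omits.

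One small point worth tightening: the standard \textsc{WSPD} procedure, started from sibling pairs, produces a cover of the \emph{unordered} pairs $\{x,y\}$ with $x\neq y$, so that each such pair lies in exactly one output $\{E_u,E_v\}$. The lemma as stated asks for a partition of the \emph{ordered} set $E\times E\setminus \mathrm{diagonal}(E)$ into Cartesian products $E'_\ell\times E''_\ell$. To match this, for each emitted well-separated pair $(E_u,E_v)$ you should output both $E_u\times E_v$ and $E_v\times E_u$ (with representatives $(x_u,x_v)$ and $(x_v,x_u)$, respectively). This at most doubles $L$ and the running time, so the bounds in (A) and (C) are unaffected.
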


	\section{Algorithm \ref{alg.norm}: Estimation of trace norm}

	\subsection{Proof of Theorem \ref{thm.sfp}}

	In this section, we prove Theorem \ref{thm.sfp} by assuming Theorem \ref{thm.bd-alg}, whose proof will appear in Section \ref{proof-thm.bd-alg}.
	
	With a slight tweak, the argument in the proof of Lemma 3.1 in \cite{F09-Data-3} yields the following. 
	
	\begin{lemma}\label{lem.wf-rep}
		Let $ E \subset \R^2 $ be a finite set. Let $ \kappa_0 > 0 $ be a constant that is sufficiently small. Let $ E_\ell', E_\ell'' $ be as in Lemma \ref{lem.wspd} with $ \kappa = \kappa_0 $. Suppose $ \vec{P} = (P^x)_{x \in E} \in \wtp(E) $ satisfies the following.
		\begin{enumerate}[(A)]
			\item $ P^x \in \Gamma_+(x,\void,M,f) $ for each $ x \in E $, with $ \Gamma_+ $ as in \eqref{eq.G-def}. 
			\item $ \abs{\d^\alpha (P^{x_\ell'} - P^{x_\ell''})(x_\ell'')} \leq M\abs{x_\ell' - x_\ell''}^{2-\abs{\alpha}}
			\text{ for } \abs{\alpha} \leq 1,\,
			\ell = 1, \cdots, L $.
		\end{enumerate}
		Then $ \norm{\vec{P}}_{\wtp(E)} \leq CM $. 
	\end{lemma}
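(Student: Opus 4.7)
The plan is to reduce the full Whitney compatibility condition on $\vec{P}$ to the finitely many representative pairs supplied by hypothesis (B), using the well-separated structure of Lemma~\ref{lem.wspd} with $\kappa = \kappa_0$. From hypothesis (A), each $P^x = \jet_x F_x$ for some $F_x \in \ctp(\rt)$ of $\ct$-norm at most $M$. Direct Taylor estimates on $F_x$ yield the pointwise bounds $\max_{|\alpha|\leq 1} |\d^\alpha P^x(x)| \leq M$ and the quadratic nonnegativity inequality $P^x(y) + \tfrac{M}{2}|y-x|^2 \geq F_x(y) \geq 0$ for every $y \in \rt$, which take care of two of the three pieces of $\norm{\vec{P}}_{\wtp(E)}$ at constant factor $O(M)$. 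All that remains is the Whitney seminorm on off-diagonal pairs.

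Fix distinct $x,y \in E$ and pick the index $\ell$ for which $(x,y) \in E_\ell' \times E_\ell''$ (swapping the roles of $E_\ell'$ and $E_\ell''$ if necessary). Lemma~\ref{lem.wspd}(B), applied with a small $\kappa_0$, yields $|x-x_\ell'|, |y-x_\ell''| \leq \kappa_0|x-y|$ up to a harmless factor, and $|x_\ell'-x_\ell''| \in [(1-2\kappa_0), (1+2\kappa_0)]\cdot|x-y|$. I would then telescope
\begin{equation*}
P^x - P^y \;=\; (P^x - P^{x_\ell'}) \;+\; (P^{x_\ell'} - P^{x_\ell''}) \;+\; (P^{x_\ell''} - P^y).
\end{equation*}
The middle term is essentially hypothesis (B): although (B) is stated at the evaluation point $x_\ell''$, the one-jet $P^{x_\ell'} - P^{x_\ell''}$ is affine, so transferring the evaluation to $x$ costs at most $|\nabla(P^{x_\ell'} - P^{x_\ell''})|\cdot|x - x_\ell''|$, which (B) together with the distance estimates above controls by $CM |x-y|^2$. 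Thus the middle term contributes at most $CM |x-y|^{2-|\alpha|}$ for $|\alpha| \leq 1$.

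The main obstacle is the two outer (intra-cluster) terms such as $P^x - P^{x_\ell'}$, for which quadratic Whitney control must be obtained at the small scale $|x - x_\ell'| \leq \kappa_0 |x-y|$. These do \emph{not} follow from the individual bounds alone, since condition (A) only places each $P^x$ in a ball of radius $M$ rather than pairing jets with each other. The strategy is to exploit nonnegativity, in particular the Markov-type inequality $|\nabla P^x|^2 \leq CM\, P^x(x)$ isolated in Lemma~\ref{lem.Q+M}, together with the second-order Taylor remainders of the witnesses $F_x$ and $F_{x_\ell'}$ on the small disk around $x_\ell'$ of radius $C\kappa_0|x-y|$. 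Comparing both one-jets to a common reference built at $x_\ell'$ and invoking the nonnegativity to convert gradient discrepancies into value discrepancies produces $|\d^\alpha(P^x - P^{x_\ell'})(x)| \leq CM|x - x_\ell'|^{2-|\alpha|}$, and symmetrically for the other outer term.

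Assembling the three telescope contributions and choosing $\kappa_0$ small enough (independently of $E$, $f$, and $M$) to absorb the implicit constants yields $|\d^\alpha(P^x - P^y)(x)| \leq CM |x-y|^{2-|\alpha|}$ for every distinct $x, y \in E$ and every $|\alpha| \leq 1$. Combined with the individual bounds and nonnegativity derived from (A) at the outset, this gives $\norm{\vec{P}}_{\wtp(E)} \leq CM$ as claimed.
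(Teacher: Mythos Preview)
Your reduction of the pointwise bounds and the nonnegativity constant to hypothesis (A), and your handling of the middle telescope term via (B), are correct. The gap is in the intra-cluster step: the claimed bound
\[
\abs{\d^\alpha(P^x - P^{x_\ell'})(x)} \leq CM\,\abs{x - x_\ell'}^{2-\abs{\alpha}}
\]
does \emph{not} follow from (A) and nonnegativity. Hypothesis (A) only says that each $P^z$ is the one-jet at $z$ of \emph{some} nonnegative $F_z \in \ct(\rt)$ with $\norm{F_z}_{\ct(\rt)} \leq M$; it imposes no relation between $F_x$ and $F_{x_\ell'}$. For a concrete obstruction, take $P^x \equiv M/4$ and $P^{x_\ell'}(z) = M/4 + c\sqrt{M}\,(z - x_\ell')\cdot e_1$ for a small constant $c$. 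Both lie in $\Gamma_+(\cdot,\void,CM,f)$ (the Markov inequality $\abs{\nabla P}^2 \leq CM\,P$ holds), yet $\abs{\nabla(P^x - P^{x_\ell'})} = c\sqrt{M}$, which is not $O(M\abs{x - x_\ell'})$ when $\abs{x - x_\ell'}$ is small. Comparing to a ``common reference built at $x_\ell'$'' does not help, because $\jet_{x_\ell'}F_x$ and $\jet_{x_\ell'}F_{x_\ell'}$ are jets of unrelated functions and can differ by order $M$ in value and order $\sqrt{M}$ in gradient.

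The argument that actually works (and is the one in Lemma~3.1 of \cite{F09-Data-3}, to which the paper defers) is an absorption, not a direct estimate. Set
\[
D := \max_{\substack{x \neq y \in E \\ \abs{\alpha} \leq 1}} \frac{\abs{\d^\alpha(P^x - P^y)(x)}}{\abs{x-y}^{2-\abs{\alpha}}},
\]
which is finite since $E$ is finite. In your telescope, bound the outer terms by the \emph{definition} of $D$: since $\abs{x - x_\ell'} \leq \diam E_\ell' \leq \kappa_0\abs{x-y}$, one gets $\abs{\d^\alpha(P^x - P^{x_\ell'})(x)} \leq D\kappa_0^{2-\abs{\alpha}}\abs{x-y}^{2-\abs{\alpha}}$, and similarly for the other outer term after moving the base point. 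Together with the $CM\abs{x-y}^{2-\abs{\alpha}}$ bound on the middle term, this yields $D \leq CM + C\kappa_0 D$, hence $D \leq C'M$ once $\kappa_0$ is small enough. Your closing remark about ``choosing $\kappa_0$ small enough to absorb the implicit constants'' is exactly the right instinct, but the quantity being absorbed must be $C\kappa_0 D$, not anything coming from (A).
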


	Recall Lemma 3.2 of \cite{F09-Data-3}.
	
	\begin{lemma}
		Let $ E \subset \R^2 $ be a finite set. Let $ E_\ell' $ and $ E_\ell'' $ be as in Lemma \ref{lem.wspd} with $ \ell = 1, \cdots, L $. Then every $ x \in E $ arises as an $ x_\ell' $ for some $ \ell \in \set{1, \cdots, L} $.
		\label{lem.arise} 
	\end{lemma}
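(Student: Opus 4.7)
The plan is to show, for each $x \in E$, that there is an index $\ell$ with $E_\ell' = \{x\}$; since $x_\ell'$ must be chosen from $E_\ell'$, this forces $x_\ell' = x$. To locate such an $\ell$, I would pair $x$ with a nearest neighbor in $E \setminus \{x\}$, locate the unique block of the Callahan-Kosaraju partition containing this ordered pair, and verify that the left-hand factor of this block cannot contain any second point.

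More precisely, fix $x \in E$ and pick $y^\ast \in E \setminus \{x\}$ minimizing $|x - y|$. Let $\ell \in \{1, \ldots, L\}$ be the unique index with $(x, y^\ast) \in E_\ell' \times E_\ell''$; then $x \in E_\ell'$, $y^\ast \in E_\ell''$, and in particular $\mathrm{dist}(E_\ell', E_\ell'') \leq |x - y^\ast|$. If some $z \in E_\ell'$ satisfied $z \neq x$, then Lemma~\ref{lem.wspd}(B) together with the minimality of $y^\ast$ would give
\[
|x - z| \;\leq\; \mathrm{diam}(E_\ell') \;\leq\; \kappa_0 \cdot \mathrm{dist}(E_\ell', E_\ell'') \;\leq\; \kappa_0 \,|x - y^\ast| \;\leq\; \kappa_0 \,|x - z|,
\]
which is impossible provided we invoke Lemma~\ref{lem.wspd} with $\kappa_0 < 1$ (and the surrounding sections already require $\kappa_0$ to be taken sufficiently small). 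Hence $E_\ell' = \{x\}$, and $x_\ell' = x$ is forced.

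Since $x \in E$ was arbitrary, every point of $E$ arises as $x_\ell'$ for at least one $\ell$, which is the claim. The main --- really the only --- obstacle is recognizing that the nearest-neighbor pair is the right gadget to feed into the partition: once that choice is in place, the diameter-to-separation inequality of Lemma~\ref{lem.wspd}(B) collapses the left factor to a single point essentially for free, and no case analysis on the geometry of the partition is required.
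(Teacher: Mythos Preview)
Your argument is correct. The paper does not actually supply a proof of this lemma; it simply cites it as Lemma~3.2 of \cite{F09-Data-3}. Your nearest-neighbor argument is the standard one and is essentially what appears in that reference: pair $x$ with a closest point $y^\ast$, locate the block $E_\ell' \times E_\ell''$ containing $(x,y^\ast)$, and use the separation inequality $\diam E_\ell' \leq \kappa\,\dist{E_\ell'}{E_\ell''} \leq \kappa\,|x-y^\ast|$ together with minimality of $|x-y^\ast|$ to force $E_\ell' = \{x\}$. One cosmetic remark: the lemma as stated does not fix the parameter $\kappa$, so strictly speaking you should phrase the conclusion for any $\kappa < 1$ rather than for the particular $\kappa_0$ of Lemma~\ref{lem.wf-rep}; and the implicit assumption $\#(E) \geq 2$ (needed for $y^\ast$ to exist) is harmless in context but worth a word.
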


	We now have all the ingredients for the proof of Theorem \ref{thm.sfp}. 
	
	\begin{proof}[Proof of Theorem \ref{thm.sfp} Assuming Theorem \ref{thm.bd-alg}]
		
		Let $ E \subset \rt $ be a finite set. Let $ \set{\Xi_x, x \in \rt} $ be as in Theorem \ref{thm.bd-alg}. For each $ x \in E $, let $ S(x) $ be as in Remark \ref{rem.Sx}.
		
		Let $ \kappa_0 $ be as in Lemma \ref{lem.wf-rep}. Let $ (x_\ell', x_\ell'') \in E\times E $, $ \ell = 1, \cdots, L $, be as in Lemma \ref{lem.wspd} with $ \kappa = \kappa_0 $. 
		
		We set
		\begin{equation}
			S_\ell := \set{x_\ell', x_\ell''} \cup S(x_\ell') \cup S(x_\ell'')
			\enskip,\enskip
			\ell = 1, \cdots, L.
			\label{eq.Sl-def}
		\end{equation}
		
		Conclusion (A) follows from Theorem \ref{thm.bd-alg}(C), Remark \ref{rem.Sx}, and Lemma \ref{lem.wspd}. 
		
		Conclusion (B) follows from Theorem \ref{thm.bd-alg}(C) and Remark \ref{rem.Sx}.
		
		Conclusion (C) follows from Lemma \ref{lem.wspd}(C). 
		
		Now we verify conclusion (D). We modify the argument in \cite{F09-Data-3}.
		
		Fix $ f : E \to \pos $. Set
		\begin{equation}
			M := \max_{\ell = 1, \cdots, L}\norm{f}_{\ctp(S_\ell)}.
			\label{eq.M-fl}
		\end{equation}

		Thanks to \eqref{eq.M-fl}, we see that $ \norm{f}_{\ctp(S_\ell)} \leq M $ for $ \ell = 1, \cdots, L $. Thus, for each $ \ell = 1, \cdots, L $, there exists $ F_\ell \in \ctp(\rt) $ such that \begin{equation}
			\norm{F_\ell}_{\ct(\rt)} \leq 2M \text{ and } F_\ell(x) = f(x)\for x \in S_\ell.
			\label{eq.3.6}
		\end{equation}
		Fix such $ F_\ell $. For $ \ell = 1, \cdots, L $, we define 
		\begin{equation}
			f_\ell :E \to \pos 
			\text{ by }
			f_\ell(x) := F_\ell(x) \for x \in E.
			\label{eq.3.7}
		\end{equation}
		From \eqref{eq.3.6} and \eqref{eq.3.7}, we see that
		\begin{equation}
			\norm{f_\ell}_{\ctp(E)} \leq 2M \for \ell = 1, \cdots, L.
			\label{eq.f-l-est}
		\end{equation}

		For each $ \ell = 1, \cdots, L $, we define \begin{equation}
			P_\ell' := \jet_{x_\ell'}\brac{\Xi_{x_\ell'}(f_\ell,2M)}
			\text{ and }
			P_\ell'' := \jet_{x_\ell''}\brac{\Xi_{x_\ell''}(f_\ell,2M)}.
			\label{eq.P-ell-def}
		\end{equation}
		
		We will show that the assignment \eqref{eq.P-ell-def} unambiguously defines a Whitney field over $ E $.

		\begin{claim}\label{claim.ell}
			Let $ \ell_1, \ell_2 \in \set{1, \cdots, L} $.
			\begin{enumerate}[(a)]
				\item Suppose $ x_{\ell_1}' = x_{\ell_2}' $. Then $ P_{\ell_1}' = P_{\ell_2}' $.
				\item Suppose $ x_{\ell_1}'' = x_{\ell_2}'' $. Then $ P_{\ell_1}'' = P_{\ell_2}'' $.
				\item Suppose $ x_{\ell_1}' = x_{\ell_2}'' $. Then $ P_{\ell_1}' = P_{\ell_2}'' $.
			\end{enumerate}
			
		\end{claim}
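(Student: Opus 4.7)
The plan is to reduce all three parts of the claim to Remark \ref{rem.Sx}, which asserts that $\Xi_x(\cdot, M)$ depends only on the values of its input on the set $S(x) \subset E$. The three equalities in Claim \ref{claim.ell} are symmetric, so once (a) is handled, (b) and (c) follow by essentially identical arguments with $x_\ell''$ in place of $x_\ell'$, or with one of each.

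First I would unpack the relevant hypotheses. By \eqref{eq.Sl-def}, the sets $S(x_\ell')$ and $S(x_\ell'')$ are subsets of $S_\ell$. By the choice of $F_\ell$ in \eqref{eq.3.6} we have $F_\ell|_{S_\ell} = f|_{S_\ell}$, and hence by \eqref{eq.3.7} the auxiliary function $f_\ell : E \to [0,\infty)$ agrees with $f$ pointwise on $S_\ell$; in particular $f_\ell = f$ on $S(x_\ell')$ and on $S(x_\ell'')$. Also, by \eqref{eq.f-l-est}, $\norm{f_\ell}_{\ctp(E)} \leq 2M$ for every $\ell$, so the pair $(f_\ell, 2M)$ always lies in the domain to which Remark \ref{rem.Sx} applies.

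For part (a), set $x := x_{\ell_1}' = x_{\ell_2}'$ and observe that $S(x) = S(x_{\ell_1}') \subset S_{\ell_1}$ and $S(x) = S(x_{\ell_2}') \subset S_{\ell_2}$. Hence by the previous paragraph $f_{\ell_1}|_{S(x)} = f|_{S(x)} = f_{\ell_2}|_{S(x)}$. Invoking Remark \ref{rem.Sx} with $(f, g, M)$ replaced by $(f_{\ell_1}, f_{\ell_2}, 2M)$ yields $\Xi_x(f_{\ell_1}, 2M) = \Xi_x(f_{\ell_2}, 2M)$. Taking one-jets at $x$ and recalling the definitions in \eqref{eq.P-ell-def} gives $P_{\ell_1}' = P_{\ell_2}'$. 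For (b), replace each $x_\ell'$ by $x_\ell''$ in the argument; for (c), use $x := x_{\ell_1}' = x_{\ell_2}''$ and note that $S(x) = S(x_{\ell_1}') \subset S_{\ell_1}$ and $S(x) = S(x_{\ell_2}'') \subset S_{\ell_2}$, so the same reasoning applies.

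There is no real obstacle here: the whole content of the claim is that the sets $S(x_\ell')$ and $S(x_\ell'')$ were deliberately thrown into $S_\ell$ in \eqref{eq.Sl-def} precisely so that the depth property of $\Xi_x$ could be triggered. The only thing to verify carefully is that the norm bound $\norm{f_\ell}_{\ctp(E)} \leq 2M$ genuinely permits us to apply Remark \ref{rem.Sx}, which is exactly \eqref{eq.f-l-est}.
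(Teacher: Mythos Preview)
Your proof is correct and follows essentially the same approach as the paper: both arguments use \eqref{eq.Sl-def} to conclude $S(x_0) \subset S_{\ell_1} \cap S_{\ell_2}$, deduce that $f_{\ell_1}$ and $f_{\ell_2}$ agree on $S(x_0)$, and then invoke Remark~\ref{rem.Sx} together with \eqref{eq.f-l-est} to obtain $\Xi_{x_0}(f_{\ell_1},2M) = \Xi_{x_0}(f_{\ell_2},2M)$. Your version is slightly more explicit in passing through $f$ as the intermediary in the equality $f_{\ell_1}|_{S(x)} = f|_{S(x)} = f_{\ell_2}|_{S(x)}$, but the substance is identical.
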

		
		\begin{proof}[Proof of Claim \ref{claim.ell}]
			We prove (a). The proofs for (b) and (c) are similar.
			
			Suppose $ x_{\ell_1}' = x_{\ell_2}' =: x_0 $. Let $ S(x_0) $ be as in Remark \ref{rem.Sx}. By \eqref{eq.Sl-def}, we see that
			\begin{equation*}
				S(x_0) \subset S_{\ell_1} \cap S_{\ell_2}. 
			\end{equation*}
			Therefore, we have
			\begin{equation*}
				f_{\ell_1}(x) = f_{\ell_2}(x) \for x \in S(x_0).
			\end{equation*}
			Thanks to Theorem \ref{thm.bd-alg}(A), Remark \ref{rem.Sx}, and \eqref{eq.f-l-est}, we see that
			\begin{equation*}
				\Xi_{x_0}(f_{\ell_1},2M) = 	\Xi_{x_0}(f_{\ell_2},2M).
			\end{equation*}
			By \eqref{eq.P-ell-def}, we see that $ P_{\ell_1} = P_{\ell_2} $. This proves (a).
		\end{proof}

		By Lemma \ref{lem.arise}, there exists a pair of maps:
		\begin{equation}
			\begin{split}
				\text{A surjection } \pi &: \set{1, \cdots, L} \to E
				\text{ such that }
				\pi(\ell) = x_{\ell}'
				\for \ell = 1, \cdots, L\text{, and }\\
				{\text{An injection }} \rho &: E \to \set{1, \cdots, L}
				\text{ such that }x_{\rho(x)}' = x \for x \in E, \text{ i.e., } \pi \circ \rho = id_E. 
			\end{split}
			\label{eq.lx}
		\end{equation}
		The surjection $ \pi $ is determined by the Callahan-Kosaraju decomposition (Lemma \ref{lem.wspd}), but the choice of $ \rho $ is not necessarily unique. 
		
		Thanks to Claim \ref{claim.ell} and the fact that $ E_\ell' \times E_\ell'' \subset E \times E \setminus diagonal(E) $, assignment \eqref{eq.P-ell-def} produces for each $ x \in E $ a uniquely defined polynomial
		\begin{equation}
			P^x = \jet_{x}\brac{\Xi_{x}(f_{\rho(x)},2M)},
			\label{eq.3.9}
		\end{equation}
		with $ \Xi_x $ as in Theorem \ref{thm.bd-alg} and $ \rho(x) $ as in \eqref{eq.lx}. Note that, as shown in Claim \ref{claim.ell}, the polynomial $ P^x $ in \eqref{eq.3.9} is independent of the choice of $ \rho $ as a right-inverse of $ \pi $ in \eqref{eq.lx}.
		
		Thanks to Theorem \ref{thm.bd-alg}(B) and \eqref{eq.f-l-est}--\eqref{eq.3.9}, for each $ \ell = 1, \cdots, L $, there exists a function $ \tilde{F}_\ell \in \ct(\rt) $ such that
		\begin{equation}\label{eq.F-ell-1}
			\norm{\tilde{F}_\ell}_{\ct(\rt)} \leq CM
			\text{ and }
			\tilde{F}_\ell \geq 0 \text{ on }\rt;
		\end{equation}
		\begin{equation}\label{eq.F-ell-2}
			\tilde{F}_\ell = f_{\ell}(x) = f(x)
			\for x \in S_{\ell} ; \text{ and }
		\end{equation}
		\begin{equation}\label{eq.F-ell-3}
			\jet_{x_{\ell}'}\tilde{F}_\ell = P^{x_{\ell}'} = \jet_{x_{\ell}'}\brac{ \Xi_{x_{\ell}'} (f_{\ell},2M) }, \text{ and }
			\jet_{x_{\ell}''}\tilde{F}_\ell = P^{x_{\ell}''} = \jet_{x_{\ell}''}\brac{ \Xi_{x_{\ell}''} (f_{\ell},2M) }.
		\end{equation}

		Thanks to \eqref{eq.F-ell-1} and \eqref{eq.F-ell-2}, we have 
		\begin{equation}
			P^{x_\ell'} \in \Gamma_+(x_\ell',\{x_\ell'\},CM,f)
			\for \ell = 1, \cdots, L.
			\label{eq.3.15}
		\end{equation}
		Thanks to \eqref{eq.F-ell-1} and \eqref{eq.F-ell-3}, we have 
		\begin{equation}
			\abs{\d^\alpha (P^{x_\ell'} - P^{x_\ell''})(x_\ell'')}
			\leq CM\abs{x_\ell' - x_\ell''}^{2-\abs{\alpha}}
			\for \abs{\alpha} \leq 1, \ell = 1, \cdots, L.
			\label{eq.3.16}
		\end{equation}

		Therefore, by Lemma \ref{lem.wf-rep}, \eqref{eq.3.15}, and \eqref{eq.3.16}, the Whitney field $ \vec{P} = (P^x)_{x \in E} $, with $ P^x $ as in \eqref{eq.3.9}, satisfies 
		\begin{equation*}
			\vec{P} \in \wtp(E),
			P^x(x) = f(x) \for x \in E,
			\text{ and }
			\norm{\vec{P}}_{\wtp(E)} \leq CM.
		\end{equation*}
		By Lemma \ref{lem.WT}(B), there exists a function $ F \in \ctp(\rt) $ such that $ \norm{F}_{\ct(\rt)} \leq CM $ and $ \jet_x F = P^x $ for each $ x \in E $. In particular, $ F(x) = P^x(x) = f(x) $ for each $ x \in E $. Thus, $ \norm{f}_{\ctp(E)} \leq CM $. This proves conclusion (D).
		
		Theorem \ref{thm.sfp} is proved. 
		
	\end{proof}

	\subsection{Explanation of Algorithm \ref{alg.norm}}

	Below are the steps of Algorithm \ref{alg.norm}. 
	
	\begin{enumerate}[Step 1.]
		\item Compute $ S_1, \cdots, S_L $ from $ E $ as in Theorem \ref{thm.sfp}.
		
		\item Read $ f : E \to \pos $.
		
		\item For $ \ell = 1, \cdots, L $, compute a number $ M_\ell $ such that $ M_\ell $ has the same order of magnitude as $ \norm{f}_{\ctp(S_\ell)} $.

		\item Return $ M := \max \set{M_\ell : \ell = 1, \cdots, L} $. 
	\end{enumerate}

	The number $ M $ produced in Step 4 has the same order of magnitude as $ \norm{f}_{\ctp(E)} $, thanks to Theorem \ref{thm.sfp} and Lemma \ref{lem.WT}. Therefore, Algorithm \ref{alg.norm} accomplishes what we claim to do.
	
	We now analyze the complexity of Algorithm \ref{alg.norm}.

	By Theorem \ref{thm.sfp}, Step 1 requires no more than $ CN\log N $ operations and $ CN $ storage. 
	
	Step 3 requires no more than $ CN $ operations. Indeed, on one hand, computing each $ M_\ell $ requires no more than $ C $ operations, thanks to the discussion in Section \ref{sect:lasso}; on the other hand, we need to carry out $ L $ computations, with $ L \leq CN $.
	
	Finally, Step 4 requires no more than $ CN $ operations.

	This concludes our discussion of Algorithm \ref{alg.norm}.

	\section{Approximation of $ \sigma^\sharp $}
	
	\newcommand{\rbn}{\mathbb{R}^{\bar{N}}}
	\newcommand{\cte}{\ct(E)}
	
	This and the next sections will be devoted to the proof of Theorem \ref{thm.bd-alg}. To prepare the way, in this section, we introduce the relevant objects and show how they can be computed efficiently. 
	
	We begin by reviewing some key objects introduced in \cite{FK09-Data-1,FK09-Data-2}, which we will use to effectively approximate the shapes of $ \sk(x,16) $ for $ x \in E $. 
	
	We will be working with $ \ct(\rt) $ functions instead of $ \ctp(\rt) $ functions.

	\subsection{Parameterized approximate linear algebra problems (PALP)}

	Let $ \bar{N} \geq 1 $. Let $ \set{\xi_1, \cdots, \xi_{\bar{N}}} $ be the standard basis for $ \R^{\bar{N}} $. We recall the following definition in Section 6 of \cite{FK09-Data-2}.
	
	\begin{definition}\label{def.palp}
		A \underline{parameterized approximate linear algebra problem (PALP for short)} is an object of the form:
		\begin{equation}
			\A = \left[ (\ul{\lambda}_1, \cdots, \ul{\lambda}_{i_{\max}}), (\ul{b}_1, \cdots, \ul{b}_{i_{\max}}), (\epsilon_1, \cdots, \epsilon_{i_{\max}}) \right],
			\label{eq.PALP-def}
		\end{equation}
		where \begin{itemize}
			\item Each $ \ul{\lambda}_i $ is a linear functional on $ \P $, which we will refer to as a ``linear functional'';
			\item Each $ \ul{b}_i $ is a linear functional on $ \cte $, which we will refer to as a ``target functional''; and
			\item Each $ \epsilon_i \in \pos $, which we will refer to as a ``tolerance''.
		\end{itemize}
		Given a PALP $ \A $ in the form \eqref{eq.PALP-def}, we introduce the following terminologies:
		\begin{itemize}
			\item We call $ i_{\max} $ the \ul{length} of $ \A $;
			\item We say $ \A $ \ul{has depth $ D $} if each of the linear functionals $ \ul{b}_i $ on $ \R^{\bar{N}} $ has depth $ D $ with respect to the basis $ \set{\xi_1, \cdots, \xi_{\bar{N}}} $ (see Definition \ref{def.depth}).
		\end{itemize}

	\end{definition}

	Recall Definition \ref{def.depth}. We assume that every PALP is ``efficiently stored'', namely, each of the target functionals are stored in its efficient representation. In particular, given a PALP $ \A $ of the form \eqref{eq.PALP-def} and a target $ \ul{b}_i $ of $ \A $, we have access to a set of indices $ \set{i_1, \cdots, i_D} \subset \set{1,\cdots,N} $, such that $ \ul{b}_i $ is completely determined by its action on $ \set{\xi_{i_1}, \cdots, \xi_{i_D}} \subset \set{\xi_1, \cdots, \xi_N} $. Here $ i_D = \depth(\ul{b}_i) $. We define
	\begin{equation}
		S(\ul{b}_i):= \set{x_{i_1}, \cdots, x_{i_D}} \subset E.
		\label{eq.S(b)-def}
	\end{equation}
	Given a PALP of the form \eqref{eq.PALP-def}, we define
	\begin{equation}
		S(\A) := \bigcup_{i = 1}^{i_{\max}}S(\ul{b}_i) \subset E
		\label{eq.S(A)-def}
	\end{equation}
	with $ S(\ul{b}_i) $ as in \eqref{eq.S(b)-def}.

	\subsection{Blobs and PALPs}
	\newcommand{\K}{\mathcal{K}}

	\begin{definition}
		A \ul{blob} in $ \P $ is a family $ \vec{\K} = (\K_M)_{M \geq 0} $ of (possibly empty) convex subsets $ \K_M \subset V $ parameterized by $ M \in \pos $, such that $ M < M' $ implies $ \K_M \subseteq \K_{M'} $. We say two blobs $ \vec{\K} = (\K_M)_{M \geq 0} $ and $ \vec{\K}' = (\K'_M)_{M \geq 0} $ are \ul{$ C $-equivalent} if $ \K_{C^{-1}M}\subset \K'_M \subset \K_{CM} $ for each $ M \in \pos $.
	\end{definition}

	Let $ \A $ be a PALP of the form \eqref{eq.PALP-def}. For each $ \phi \in \cte \cong \R^{\#(E)} $, we have a blob defined by
	\begin{equation}
		\begin{split}
			\vec{\K}_\phi(\A)  &= \brac{\K_\phi(\A,M)}_{M \geq 0}
			\text{, where}\\
			\K_\phi(\A,M) &:= \set{P \in \P : \abs{\ul{\lambda}_i(P) - \ul{b}_i(\phi)} \leq M\epsilon_i\for i = 1, \cdots, i_{\max} } \subset V.
		\end{split}
		\label{eq.K1}
	\end{equation}
	In this paper, we will be mostly interested in the centrally symmetric (called \qt{homogeneous} in \cite{FK09-Data-2}) polytope defined by setting $ \phi \equiv 0 $:
	\begin{equation}
		\sigma(\A) := \K_0(\A,1).
		\label{eq.sigma(A)-def}
	\end{equation}
	Note that $ \sigma(\A) $ is never empty, since it contains the zero polynomial.

	\subsection{Essential PALPs and Blobs}

	Let $ E \subset \rt $ be a finite set with $ \#(E) = N $. We assume that $ E $ is labeled: $ E = \set{x_1, \cdots, x_N}  $. We identify $ \ct(E) \cong \R^N $ with respect to the standard basis $ \set{\xi_1, \cdots, \xi_N} $ for $ \R^N $. 
	
	\begin{definition}
		\renewcommand{\S}{\Sigma} For each $ x \in \rt $ and $ \phi \in \ct(E) $, we define a blob
		\begin{equation}
			\begin{split}
				\vec{\S}_\phi(x) &= \brac{\S_\phi(x,M)}_{M\geq 0}\text{ where}\\
				\S_\phi(x,M) &:= \set{ P \in \P : \begin{matrix}
						\text{ There exists } G \in \ct(\rt) \text{ with}\\\norm{G}_{\ct(\rt)} \leq M, G|_E = \phi, \text{ and }\jet_x G= P.
				\end{matrix} }
			\end{split}
			\label{eq.Sigma(x)}
		\end{equation}
	\end{definition}

	It is clear from the definition of $ \sigma $ in \eqref{eq.sigma-def} that
	\begin{equation*}
		\sigma(x,E) = \Sigma_0(x,1).
	\end{equation*}
	Therefore, thanks to Lemma \ref{lem.fp-sigma}, we have
	\begin{equation}
		C^{-1}\cdot\sk(x,16) \subset \Sigma_0(x,1) \subset C\cdot\sk(x,16), \enskip x \in E
		\label{eq.sk-Sigma_0}
	\end{equation}
	for some universal constant $ C $.

	We summarize some relevant results from \cite{FK09-Data-2}.
	
	\begin{lemma}\label{lem.FK-palp}
		Let $ E \subset \rt $ be finite. Using at most $ CN\log N $ operations and $ CN $ storage, we can compute a list of PALPs $ \set{\A(x) : x \in E} $ such that the following hold.
		\begin{enumerate}[(A)]
			\item There exists a universal constant $ D_0 $ such that for each $ x \in E $, $ \A(x) $ has length no greater than $ 3 = \dim \P $ and has depth $ D_0 $.
			\item For each given $ x \in \rt $ and $ \phi \in \ct(E) $, the blobs $ \vec{\K}_\phi(\A(x)) $ as in \eqref{eq.K1} and $ \vec{\Sigma}_\phi(x) $ as in \eqref{eq.Sigma(x)} are $ C $-equivalent. 
		\end{enumerate}
	\end{lemma}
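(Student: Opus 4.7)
The plan is to port the algorithmic construction of parameterized approximate linear algebra problems from Fefferman--Klartag \cite{FK09-Data-1, FK09-Data-2} to our setting, specialized to $m = n = 2$ and the three-dimensional space $\P$ of affine polynomials on $\rt$.

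First, I would preprocess $E$ with the Callahan--Kosaraju decomposition (Lemma \ref{lem.wspd}), which costs $CN\log N$ operations and $CN$ storage and supports efficient proximity queries. Using this data structure, for each $x \in E$ I would assemble a universally bounded set $S(x) \subset E$ of ``testing points'' in a geometrically controlled configuration near $x$: a bounded number of nearest neighbors across $O(\log N)$ dyadic scales around $x$, as in the construction of representative/keystone points in \cite{FK09-Data-2}. Each such lookup can be carried out in $C\log N$ operations.

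Next, for each $x \in E$, I would assemble $\A(x)$ with at most $i_{\max} \leq \dim \P = 3$ components. Each linear functional $\ul{\lambda}_i$ on $\P$ encodes a Whitney-type test --- point evaluation or a rescaled finite-difference operator centered at $x$ --- while the paired target $\ul{b}_i$ performs the identical linear combination on the values of $\phi$ at the points of $S(x)$, so that $\ul{\lambda}_i(\jet_x G) - \ul{b}_i(\phi)$ measures the defect of that Whitney constraint against $\phi$. The tolerance $\epsilon_i$ records the reciprocal of the geometric scale at which the test is applied. Because each $\ul{b}_i$ only reads values of $\phi$ on $S(x)$, we have $\depth(\ul{b}_i) \leq \#(S(x)) \leq D_0$, so the PALP is produced in efficient representation automatically, and the length bound and depth bound in (A) follow directly.

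The main technical step is verifying the $C$-equivalence of blobs in (B). The forward direction $\vec{\Sigma}_\phi(x) \subset C\cdot\vec{\K}_\phi(\A(x))$ is a Taylor-expansion estimate: if $G \in \ct(\rt)$ realizes $\Sigma_\phi(x,M)$, then $\jet_x G$ satisfies every Whitney constraint encoded in $\A(x)$ up to tolerance $M\epsilon_i$, by the mean value theorem applied at scale $\epsilon_i^{-1}$. The reverse inclusion $\vec{\K}_\phi(\A(x)) \subset C\cdot\vec{\Sigma}_\phi(x)$ is the substantive direction and is where I expect the main work to lie: it amounts to a local finiteness principle asserting that any $P \in \P$ satisfying the bounded list of PALP inequalities at the testing points extends to some $G \in \ct(\rt)$ with $\norm{G}_{\ct(\rt)} \leq CM$, $G|_E = \phi$, and $\jet_x G = P$. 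This is precisely the content of the Whitney extension machinery developed in \cite{FK09-Data-1, FK09-Data-2} for general $C^m(\Rn)$, which specializes to $\ct(\rt)$ without modification once $S(x)$ has been chosen appropriately. Combining all components, the total cost is $CN\log N$ operations and $CN$ storage for the family $\set{\A(x) : x \in E}$.
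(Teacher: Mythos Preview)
Your proposal takes the same approach as the paper: the paper does not supply its own proof of this lemma but simply cites \cite{FK09-Data-2} (Section~11 for part (A), Sections~10--11 and Lemma~34.3 for part (B)), treating the construction of the PALPs $\A(x)$ as a black box imported from Fefferman--Klartag. Your write-up is a reasonable high-level summary of what lies inside that black box.

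One imprecision worth flagging: you describe $S(x)$ as built from ``a bounded number of nearest neighbors across $O(\log N)$ dyadic scales,'' which on its face would yield $\#(S(x)) = O(\log N)$ rather than a universal bound $D_0$, contradicting the depth claim in (A). The actual Fefferman--Klartag construction is more delicate than scale-by-scale nearest-neighbor sampling; the universal depth bound comes from the structure of their Calder\'on--Zygmund decomposition and the fact that only a bounded number of ``keystone'' squares are relevant to each $x$. If you intend to sketch the content of \cite{FK09-Data-2} rather than merely cite it, you should clarify that the $O(\log N)$ cost is purely in the query time and that the output set $S(x)$ has cardinality bounded independently of $N$.
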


	See Section 11 of \cite{FK09-Data-2} for Lemma \ref{lem.FK-palp}(A), and Sections 10, 11, and Lemma 34.3 of \cite{FK09-Data-2} for Lemma \ref{lem.FK-palp}(B). 
	
	The main lemma of this section is the following.
	
	\begin{lemma}\label{lem.sigma-main}
		Let $ E \subset \rt $ be given. Let $ \set{\A(x) : x \in E} $ be as in Lemma \ref{lem.FK-palp}. Recall the definitions of $ \sigma $ and $ S(\A(x)) $ as in \eqref{eq.sigma-def} and \eqref{eq.S(A)-def}. Then there exists a universal constant $ C $ such that, for each $ x \in E $, 
		\begin{equation*}
			C^{-1}\cdot \sigma(x,S(\A(x))) \subset \sk(x,16) \subset C\cdot \sigma(x,S(\A(x))).
		\end{equation*}
	\end{lemma}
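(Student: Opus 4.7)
The plan is to sandwich $\sigma(x, S(\A(x)))$ between $\sigma(x, E)$ and a universal multiple of $\sigma(\A(x))$, and then use Lemmas~\ref{lem.fp-sigma} and~\ref{lem.FK-palp} to tie both to $\sk(x, 16)$. The decisive structural observation is that, under our efficient-storage convention, each target functional $\ul{b}_i$ appearing in the PALP $\A(x)$ factors through the projection onto the coordinates indexed by $S(\ul{b}_i) \subset S(\A(x))$; equivalently, the value $\ul{b}_i(\phi)$ depends on $\phi \in \ct(E)$ only through the restriction $\phi|_{S(\A(x))}$.

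The inclusion $\sk(x, 16) \subset C \cdot \sigma(x, S(\A(x)))$ is the easy half: Lemma~\ref{lem.fp-sigma} yields $C^{-1} \sk(x, 16) \subset \sigma(x, E)$, and since $S(\A(x)) \subset E$, any $\ct(\rt)$ function vanishing on $E$ a fortiori vanishes on $S(\A(x))$, giving $\sigma(x, E) \subset \sigma(x, S(\A(x)))$.

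For the reverse inclusion $\sigma(x, S(\A(x))) \subset C \cdot \sk(x, 16)$, I would fix $P \in \sigma(x, S(\A(x)))$ and write $P = \jet_x F$ for some $F \in \ct(\rt)$ with $\norm{F}_{\ct(\rt)} \leq 1$ and $F|_{S(\A(x))} = 0$. Setting $\phi := F|_E \in \ct(E)$, we have $\phi|_{S(\A(x))} = 0$, and by definition~\eqref{eq.Sigma(x)}, $P \in \Sigma_\phi(x, 1)$. Lemma~\ref{lem.FK-palp}(B) then places $P \in \K_\phi(\A(x), C)$. The structural observation forces $\ul{b}_i(\phi) = 0$ for every target $\ul{b}_i$ of $\A(x)$, so reading off~\eqref{eq.K1} gives $\K_\phi(\A(x), C) = \K_0(\A(x), C) = C \cdot \sigma(\A(x))$. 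Applying Lemma~\ref{lem.FK-palp}(B) once more, this time with $\phi \equiv 0$, yields $\sigma(\A(x)) \subset C \cdot \sigma(x, E)$, and Lemma~\ref{lem.fp-sigma} gives $\sigma(x, E) \subset C \cdot \sk(x, 16)$. Chaining these inclusions places $P$ in a universal multiple of $\sk(x, 16)$, which is the desired inclusion after absorbing the constants.

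The only genuine subtlety is justifying the ``$\A(x)$ sees only $\phi|_{S(\A(x))}$'' claim rigorously; this amounts to unpacking Definition~\ref{def.depth} for the linear functional $\ul{b}_i$ (with $\Omega = \ct(E) \cong \R^N$ and the standard basis) together with the convention, articulated in Remark~\ref{rem.depth}, that each $\ul{b}_i$ is stored in its efficient representation indexed by $S(\ul{b}_i)$. Once this is in hand, the rest is just bookkeeping the universal constants supplied by Lemmas~\ref{lem.fp-sigma} and~\ref{lem.FK-palp}.
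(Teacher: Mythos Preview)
Your proposal is correct and follows essentially the same route as the paper's proof: both arguments use monotonicity in $S$ together with Lemma~\ref{lem.fp-sigma} for the easy inclusion, and for the hard inclusion both take a witness $F$ for $P \in \sigma(x,S(\A(x)))$, set $\phi := F|_E$, pass to $\K_\phi(\A(x),C)$ via Lemma~\ref{lem.FK-palp}(B), and then exploit the fact that each target functional $\ul{b}_i$ annihilates any $\phi$ vanishing on $S(\A(x))$ to collapse $\K_\phi$ to $\K_0 = C\cdot\sigma(\A(x))$. The only cosmetic difference is that the paper first records the chain $\sk(x,16)\approx\sigma(x,E)\approx\sigma(\A(x))$ in a single display and then quotes it, whereas you invoke Lemma~\ref{lem.FK-palp}(B) and Lemma~\ref{lem.fp-sigma} separately at the end; the content is identical.
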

	
	\begin{proof}
		For centrally symmetric $ \sigma,\sigma' \subset \P $, we write $ \sigma \approx \sigma' $ if there exists a universal constant $ C $ such that $ C^{-1}\cdot \sigma \subset \sigma' \subset C\cdot \sigma $. Thus, we need to show $ \sigma(x,\A(x)) \approx \sk(x,16) $ for $ x \in E $. 
		
		Thanks to Lemma \ref{lem.fp-sigma}, Lemma \ref{lem.FK-palp}(B) (applied to $ \phi \equiv 0 $), \eqref{eq.sigma(A)-def}, and \eqref{eq.sk-Sigma_0}, we have
		\begin{equation}
			\sk(x,16) \approx \sk(x,E) \approx \K_0(\A(x),1) = \sigma(\A(x))\for x \in E.
			\label{eq.3.8}
		\end{equation}
		Therefore, it suffices to show that
		\begin{equation*}
			\sigma(x,S(\A(x))) \approx \sigma(\A(x))
			\for x \in E.
		\end{equation*}
		From \eqref{eq.3.8} and the definition of $ \sigma $ in \eqref{eq.sigma-def}, we see that
		\begin{equation*}
			\sigma(\A(x)) \subset C\cdot \sigma(x,E) \subset C\cdot \sigma(x,S(\A(x))).
		\end{equation*}
		It remains to show that
		\begin{equation*}
			\sigma(x,S(\A(x))) \subset C\cdot \sigma(\A(x)) .
		\end{equation*}
		
		Let $ x \in E $ and let $ P \in \sigma(x,S(\A(x))) $. Then there exists $ \phi \in \ct(\rt) $ such that $ \norm{\phi}_{\ct(\rt)} \leq 1 $, $ \phi(x) = 0 $ for all $ x \in S(\A(x)) $, and $ \jet_x(\phi) = P $. Note that $ \phi|_E \in \ct(E) $. We abuse notation and write $ \phi $ in place of $ \phi|_E $ when there is no possibility of confusion.
		
		It is clear from the definition of $ \Sigma_\phi(x,M) $ in \eqref{eq.Sigma(x)} that
		\begin{equation*}
			P \in \Sigma_\phi(x,1).
		\end{equation*}
		By Lemma \ref{lem.FK-palp}(B), we have
		\begin{equation*}
			P \in \K_\phi(\A(x),C)
		\end{equation*}
		with $ \K_\phi(\A(x),C) $ as in \eqref{eq.K1}. In particular, we have
		\begin{equation}
			\abs{\ul{\lambda}_i(P) - \ul{b}_i(\phi)} \leq C\epsilon_i
			\for i = 1, \cdots, L = \mathrm{length}(\A(x)).
			\label{eq.3.14}
		\end{equation}
		Here, the $ \ul{\lambda}_1, \cdots, \ul{\lambda}_L$,$ \ul{b}_1, \cdots, \ul{b}_L $, and $ \epsilon_1, \cdots, \epsilon_L $, respectively, are the linear functionals, target functionals, and the thresholds of $ \A(x) $. However, by the definition of $ S(\A(x)) $ in \eqref{eq.S(A)-def} and the fact that $ \phi \equiv 0 $ on $ S(\A(x)) $, we see that \eqref{eq.3.14} simplifies to 
		\begin{equation*}
			\abs{\ul{\lambda}_i(P)} \leq C\epsilon_i
			\for i = 1, \cdots, L = \mathrm{length}(\A(x)).
		\end{equation*}
		This is equivalent to the statement
		\begin{equation*}
			P \in \K_0(\A(x),C) = C\cdot \sigma(\A(x)).
		\end{equation*}
		Lemma \ref{lem.sigma-main} is proved. 
	\end{proof}

	\section{Algorithm \ref{alg.interpolant}: Computing a $C$-optimal interpolant}

	Let $ E \subset \rt $ be a finite set. We fix $ E $ throughout the rest of the paper.

	\subsection{Calder\'on-Zygmund squares}

	Let $ \tilde{\sigma} \subset \rt $ be a symmetric convex set. We define
	\begin{equation}\label{eq.diam-def}
		\diam \tilde{\sigma} := 2\cdot \sup_{u \in \rt, \abs{u} = 1}p_{\tilde{\sigma}}(u),
	\end{equation}
	where $ p_{\tilde{\sigma}}(u) $ is a gauge function given by
	\begin{equation}
		p_{\tilde{\sigma}}(u) := \sup \set{r \geq 0: ru\subset{\tilde{\sigma}}}. 
		\label{eq.Mink-def}
	\end{equation}

	Let $ \set{\A(x):x \in E} $ be as in Lemma \ref{lem.FK-palp}, and let $ \sigma(\A(x)) \subset \P $ be as in \eqref{eq.sigma(A)-def}. Note that for each $ x \in E $, $ \sigma(\A(x)) \subset \P $ two-dimensional. Indeed, thanks to Lemma \ref{lem.FK-palp}(B) (with $ \phi \equiv 0 $), any $ P \in \sigma(\A(x)) $, $ x \in E $, must have $ P(x) = 0 $. Thus, for each $ x \in E $, we can identify $ \sigma(\A(x)) $ as a subset of $ \rt $ via the map
	\begin{equation}
		\sigma(\A(x))\ni P \mapsto (\grad P \cdot e_1, \grad P \cdot e_2),
		\label{eq.jet-id}
	\end{equation}
	where $ \set{e_1, e_2} $ is the chosen orthonormal system.

	\newcommand{\Lz}{\Lambda_0}
	\newcommand{\xqs}{{x_Q^\sharp}}
	
	Let $ A_1, A_2 > 0 $ be sufficiently large dyadic numbers. Let $ \set{\A(x):x \in E} $ be as in Lemma \ref{lem.FK-palp}. We say a dyadic square \ul{$ Q $ is OK} if the following hold.
	\begin{itemize}
		
		\item Either $ \#(E \cap 5Q) \leq 1 $, or $ \diam\sigma(\A(x)) \geq A_1\dq $ for all $ x \in E \cap 5Q $. Here and below, the $ \diam(\sigma(\A(x))) $ is defined using the formula \eqref{eq.diam-def} via the identification \eqref{eq.jet-id}. 
		
		\item $ \dq \leq A_2^{-1} $. 
	\end{itemize}

	\begin{definition}\label{def.CZ}
		We write $ \Lz $ to denote the collection of dyadic squares $ Q $ such that both of the following hold.
		\begin{enumerate}[(A)]
			\item $ Q $ is OK (see above).
			\item Suppose $ \dq < A_2^{-1} $, then $ Q^+ $ is not OK.
		\end{enumerate}
	\end{definition}

	\begin{remark}
		Note that there are two differences in the definition of $ \Lz $ than those in \cite{JL20,JL20-Ext}.
		\begin{itemize}
			\item We use $ 5Q $ in the definition of $ \Lz $ instead of using $ 2Q $. This has the advantage that $ 5Q^+ \subset 5^2Q = 25Q $. 
			\item We do not require $ \diam\sigma(\A(x)) \geq A_1\dq $ for $ x \in E \cap 5Q $ when $ \#(E \cap 5Q) = 1 $.
		\end{itemize}
		We will provide explanation when these differences change the structure of the analysis. Otherwise, we will simply add the word ``variant'' to our reference to results in \cite{JL20,JL20-Ext}.
	\end{remark}

	\begin{lemma}\label{lem.CZ}
		$ \Lz $ enjoys the following properties. 
		\begin{enumerate}[(A)]
			\item\label{lem.CZ-cover} $ \Lz $ forms a cover of $ \rt $ with good geometry:
			\begin{enumerate}[(\text{A}1)]
				\item $ \rt = \bigcup_{Q \in \Lz} Q $;
				\item If $ Q, Q' \in \Lz $ with $ (1+2c_G)Q \cap (1+2c_G)Q' \neq \void $, then 
				\begin{equation*}\eqindent
					C^{-1}\dq \leq \delta_{Q'} \leq C\dq;
				\end{equation*}
				and as a consequence, for each $ Q \in \Lz $, 
				\begin{equation*}\eqindent
					\#\set{Q'\in \Lz: (1+c_G)Q' \cap (1+c_G)Q \neq \void} \leq C'.
				\end{equation*}
				Here, $ C,C' $ are universal constants, and $ c_G $ is a sufficiently small constant, say $ 1/32 $.
			\end{enumerate}

			\item\label{lem.CZ-graph}Let $ Q \in \Lz $. Then there exists $ \phi \in \ct(\R) $ such that 
			\begin{equation}
				\eqindent
				\rho(E \cap 5Q) \subset \set{(t,\phi(t)):t \in \R},
				\label{eq.phi-1}
			\end{equation}
			where $ \rho $ is some rotation about the origin depending only on $ Q $.
			Moreover, $ \phi $ satisfies the estimates
			\begin{equation}
				\eqindent
				\abs{\ddtm\phi(t)} \leq CA_1^{-1}\dq^{1-m}
				\for m = 1,2,
				\label{eq.phi-2}
			\end{equation}
			with $ A_1 $ as in Definition \ref{def.CZ}.
			Furthermore, suppose for some $ x_0 \in E \cap 5Q $ and a unit vector $ u_0 $, we have
			\begin{equation*}
				\diam\sigma(\A(x_0))=p_{\sigma(\A(x_0))}(u_0) 
			\end{equation*}
			with $ \diam \sigma(\A(x_0)) $ and $ p_{\sigma(\A(x_0))}(u_0) $ as in \eqref{eq.diam-def} and \eqref{eq.Mink-def}. Then we can take $ \phi $ to satisfy the following property:
			\begin{enumerate}[(B1)]
				\item We can take $ \rho $ in \eqref{eq.phi-1} to be the rotation specified by $ u_0 \mapsto e_2 $;
				\item We can take $ x_0 = (0,\phi(0)) $.
			\end{enumerate}
			
			As a consequence, there exists a $ C^2 $-diffeomorphism $ \Phi : \rt \to \rt $ defined by
			\begin{equation*}
				\Phi\circ\rho(t_1, t_2) = (t_1, t_2 -\phi(t_1))
				\text{ where $ \rho $ is the rotation as in \eqref{eq.phi-1},}
			\end{equation*}
			such that $ \Phi(E \cap 5Q) \subset \R\times\set{t_2 = 0} $ and $ \abs{\grad^m\Phi}, \abs{\grad^m\Phi^{-1}} \leq CA_1^{-1}\dq^{1-m}$ for $ m = 1,2 $, with $ A_1 $ as in Definition \ref{def.CZ}.
			
		\end{enumerate}
	\end{lemma}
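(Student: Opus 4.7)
The plan is to establish (A) by standard Calder\'on--Zygmund stopping-time reasoning and to establish (B) by applying the implicit function theorem to an extremal test function associated to $\sigma(\A(x_0))$. For (A1), given $x \in \rt$ I would start with a dyadic square of sidelength $A_2^{-1}$ containing $x$ and descend. Either $\#(E \cap 5Q) \leq 1$ for $Q$ small enough (as $E$ is finite), or every $y \in E \cap 5Q$ has $\diam \sigma(\A(y)) > 0$ (by Lemma \ref{lem.sigma-main} and Lemma \ref{lem.fp-sigma} we have $\sigma(\A(y)) \approx \sigma(y,E)$, and the latter has positive diameter whenever $\#(E) \geq 2$ because one can construct a smooth function vanishing on $E$ with nonzero gradient at $y$), so $A_1\dq < \diam \sigma(\A(y))$ once $\dq$ is small enough. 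Thus descendants become OK and the minimal OK ancestor of $x$ belongs to $\Lz$. For (A2), the standard parent-comparison works (a variant of the corresponding argument in \cite{JL20}): if $(1+2c_G)Q \cap (1+2c_G)Q' \neq \void$ with $\dq \ll \delta_{Q'}$, iterating $Q \mapsto Q^+$ produces an ancestor $\widehat{Q}$ of $Q$ with $\delta_{\widehat{Q}} \sim \delta_{Q'}$; the OK condition for $Q'$ transfers to $\widehat{Q}$ via $5\widehat{Q} \subset 25Q'$, which is exactly the geometric room afforded by the $5Q$-over-$2Q$ convention noted in the preceding Remark, and this contradicts the minimality of $Q$ in Definition \ref{def.CZ}(B). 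Bounded overlap then follows by volumetric counting.

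For (B), fix $Q \in \Lz$ and $x_0 \in E \cap 5Q$ together with a unit vector $u_0$ realizing the supremum in \eqref{eq.diam-def} for $\sigma(\A(x_0))$. If $\#(E \cap 5Q) \leq 1$ the claim is trivial, so assume by Definition \ref{def.CZ} that $\diam \sigma(\A(x)) \geq A_1\dq$ for every $x \in E \cap 5Q$. Via the identification \eqref{eq.jet-id}, $u_0$ provides $P_0 \in \sigma(\A(x_0))$ with $\grad P_0 \cdot u_0 \geq \frac{A_1}{2}\dq$. Chaining Lemma \ref{lem.FK-palp}(B), Lemma \ref{lem.sigma-main}, and Lemma \ref{lem.fp-sigma} yields $\sigma(\A(x_0)) \approx \sigma(x_0,E)$ with universal-constant loss only, so there exists $\psi \in \ct(\rt)$ with $\norm{\psi}_{\ct(\rt)} \leq C$, $\psi|_E = 0$, and $\grad\psi(x_0)\cdot u_0 \geq c A_1\dq$ for some universal $c > 0$. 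By Taylor expansion together with $\norm{\psi}_{\ct(\rt)} \leq C$, for every $y \in 5Q$ we have $\abs{\grad\psi(y) - \grad\psi(x_0)} \leq C\dq$; fixing $A_1$ large enough once and for all then ensures $\grad\psi(y)\cdot u_0 \geq \frac{c}{2}A_1\dq$ throughout $5Q$.

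From here the passage to $\phi$ and $\Phi$ is routine implicit function theorem. Let $\rho$ be the rotation sending $u_0 \mapsto e_2$ and set $\widetilde{\psi} := \psi \circ \rho^{-1}$. The lower bound on $\d_{t_2}\widetilde{\psi}$ across $\rho(5Q)$ together with the implicit function theorem produces $\phi$, locally defined on the $t_1$-projection of $\rho(5Q)$ with $\widetilde{\psi}(t_1,\phi(t_1)) = 0$; translating the $t_1$-axis realizes $x_0 = (0,\phi(0))$, and $\rho(E \cap 5Q) \subset \set{\widetilde{\psi} = 0}$ delivers \eqref{eq.phi-1}. Implicit differentiation gives $\phi'(t_1) = -\d_{t_1}\widetilde{\psi}/\d_{t_2}\widetilde{\psi}$ evaluated at $(t_1,\phi(t_1))$; using $\norm{\widetilde{\psi}}_{\ct} \leq C$ in the numerator and $\abs{\d_{t_2}\widetilde{\psi}} \gtrsim A_1\dq$ in the denominator yields $\abs{\phi'} \leq CA_1^{-1}$, and differentiating once more gives $\abs{\phi''} \leq CA_1^{-1}\dq^{-1}$; this is \eqref{eq.phi-2}. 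Extending $\phi$ globally to $\R$ by a standard cutoff preserves the bounds, and the diffeomorphism $\Phi$ defined by the formula in the statement inherits its derivative estimates directly from those of $\phi$.

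The main obstacle is producing a \emph{single} test function $\psi$ whose zero set flattens the entire set $E \cap 5Q$ simultaneously, rather than merely the subset $S(\A(x_0))$ attached to one PALP. This is exactly what the chain $\sigma(\A(x_0)) \approx \sigma^\sharp(x_0, 16) \approx \sigma(x_0, E)$ supplies, at the cost only of universal constants; without Lemma \ref{lem.sigma-main} and Lemma \ref{lem.fp-sigma} one would only obtain a $\psi$ vanishing on $S(\A(x_0))$, which is insufficient to control all of $E \cap 5Q$. The secondary subtlety is that $A_1$ must be pinned down here, large enough for the gradient lower bound at $x_0$ to propagate across $5Q$ against the universal $\ct$-norm bound on $\psi$; this same $A_1$ then controls $\phi$ in \eqref{eq.phi-2}.
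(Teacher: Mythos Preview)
Your overall strategy is the standard one and matches what the paper invokes from \cite{FK09-Data-2} and \cite{JL20}: stopping-time for (A), and for (B) produce a test function $\psi$ vanishing on $E$ with a large directional derivative at $x_0$, then apply the implicit function theorem. The point you flag as the ``main obstacle''---that $\psi$ must vanish on all of $E$, not just $S(\A(x_0))$---is handled correctly via the chain $\sigma(\A(x_0)) \approx \sigma(x_0,E)$.

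There is, however, a genuine gap in your derivative estimate for $\phi$. You write that $\abs{\phi'} \leq CA_1^{-1}$ follows from ``$\norm{\widetilde\psi}_{\ct}\leq C$ in the numerator and $\abs{\d_{t_2}\widetilde\psi}\gtrsim A_1\dq$ in the denominator.'' But $\norm{\widetilde\psi}_{\ct}\leq C$ only gives $\abs{\d_{t_1}\widetilde\psi}\leq C$, and dividing by $A_1\dq$ yields $\abs{\phi'}\lesssim (A_1\dq)^{-1}$, which blows up as $\dq\to 0$ and is \emph{not} the claimed bound $CA_1^{-1}$. The missing ingredient is that you can (and must) choose $P_0$ with $\nabla P_0$ \emph{exactly} parallel to $u_0$: by the definitions \eqref{eq.diam-def}--\eqref{eq.Mink-def}, the direction $u_0$ realizing the diameter gives $r u_0 \in \sigma(\A(x_0))$ for $r \sim A_1\dq$, and the inclusion $\sigma(\A(x_0)) \subset C\cdot\sigma(x_0,E)$ then produces $\psi$ with $\jet_{x_0}\psi$ a multiple of $P_0$, hence $\nabla\psi(x_0)\parallel u_0$. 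After rotation this means $\d_{t_1}\widetilde\psi$ \emph{vanishes} at $\rho(x_0)$, so Taylor's theorem gives $\abs{\d_{t_1}\widetilde\psi}\leq C\dq$ throughout $\rho(5Q)$, and now the quotient is $\lesssim C\dq/(A_1\dq)=CA_1^{-1}$ as required. Your bound for $\phi''$ then also goes through.

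A smaller imprecision in (A2): the containment you want is $5Q^+\subset 5Q'$ (or $5\widehat Q\subset 5Q'$), not $5\widehat Q\subset 25Q'$; the OK condition concerns $E\cap 5Q'$, so a containment into $25Q'$ does not by itself transfer OK-ness. With $c_G$ small and $\delta_{Q'}\geq 4\dq$ one checks directly that $5Q^+\subset 5Q'$, which gives the contradiction.
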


	\begin{remark}
		Lemma \ref{lem.CZ}(A) can be found in Section 21 of \cite{FK09-Data-2}. See also Lemma 5.1 of \cite{JL20}. Lemma \ref{lem.CZ}(B) follows from the proofs of Lemma 5.4 and 5.5 with a minor modification: For $ Q \in \Lz $ with $ \#(E \cap 5Q) \leq 1 $, we can simply take $ \phi $ to be a suitable constant function on $ \R $.  
	\end{remark}

	We recall the following results from \cite{FK09-Data-2}.
	
	\newcommand{\erep}{Rep}

	\begin{lemma}\label{lem.FK-CZ}
		After one-time work using at most $ CN\log N $ operations and $ CN $ storage, we can perform each of the the following tasks using at most $ C\log N $ operations.
		\begin{enumerate}[(A)]
			\item {\rm (Section 26 of \cite{FK09-Data-2})} Given a point $ x \in \rt $, we compute a list $ \Lambda(x):=\set{Q \in \Lz : (1+c_G)Q \ni x} $.
			
			\item {\rm (Section 27 of \cite{FK09-Data-2})} Given a dyadic square $ Q\subset \rt $, we can compute $ Empty(Q) $, with $ Empty(Q) = True $ if $ E \cap 25Q = \void $, and $ Empty(Q) = False $ if $ E \cap 25Q \neq \void $. 
			
			\item {\rm (Section 27 of \cite{FK09-Data-2})} Given a dyadic square $ Q \subset \rt $ with $ E \cap 25Q \neq \void $, we can compute $ \erep(Q) \in E \cap 25Q $, with the property that $ \erep(Q) \in E \cap 5Q $ if $ E \cap 5Q \neq \void $. 
		\end{enumerate}

	\end{lemma}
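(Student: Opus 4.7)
The plan is to build two spatial data structures on $E$: a compressed quadtree supporting point location, range-emptiness, and representative queries in $C\log N$ time per query (standard, buildable in $CN\log N$ operations and $CN$ storage); and, after those are in place, a secondary search structure on the list $\Lz$ itself. The family of PALPs $\set{\A(x) : x \in E}$ from Lemma \ref{lem.FK-palp}, together with the precomputed diameters $\diam\sigma(\A(x))$, is also assumed available with the stated complexity. With these ingredients, (B) and (C) become short reductions to standard quadtree queries, while (A) requires the additional step of enumerating $\Lz$.

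For (B), given a query dyadic $Q$ we descend the compressed quadtree to the subtree representing $25Q$ in $C\log N$ steps and report whether it is empty. For (C), given $Q$ with $E\cap 25Q\neq\void$, we first apply the (B)-procedure to $5Q$; if nonempty, we return a stored witness point attached to the corresponding subtree, which lies in $E\cap 5Q$; otherwise we return any stored witness for $E\cap 25Q$. In both cases the work is $C\log N$.

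For (A), I would first enumerate $\Lz$ and place its squares into a second compressed quadtree. To enumerate $\Lz$, I recurse top-down from a sufficiently large dyadic ancestor (of sidelength, say, slightly exceeding $A_2^{-1}$). At each dyadic $Q$ under consideration I use (B) and (C) on $5Q$, together with the PALP-based test $\diam\sigma(\A(x)) \geq A_1\dq$ for each $x \in E\cap 5Q$ (of which there are $O(1)$ by good geometry once we have descended far enough), to decide whether $Q$ is OK. If $Q$ is OK and $Q^+$ is not OK (equivalently $\dq < A_2^{-1}$ and the OK-test fails one level up), I add $Q$ to $\Lz$; otherwise I recurse on the four dyadic children of $Q$. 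Once $\Lz$ is built, conclusion (A2) of Lemma \ref{lem.CZ} guarantees $\#\Lambda(x) \leq C'$, so a single point-location query in the secondary quadtree on $\Lz$ returns $\Lambda(x)$ in $C\log N$ operations.

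The main obstacle is bounding the cost of the enumeration of $\Lz$ by $CN\log N$; a naive recursion on dyadic squares is unbounded because there may be long chains of dyadic squares in which the geometry of $E$ does not change. This is resolved by the compression trick used in \cite{FK09-Data-2}: whenever the current $Q$ has $\#(E\cap 25Q)\leq 1$, so that all sufficiently small descendants containing the single point (or none) are OK, we can jump directly to the correct $\Lz$-member without walking down level by level, using the compressed quadtree to locate the critical scale in $C\log N$ operations. Every branching step of the recursion either splits the point set of $E\cap 25Q$ into two nonempty pieces or crosses a scale threshold attached to some $\diam\sigma(\A(x))$; by a standard charging argument these events total $O(N)$, and each one costs $C\log N$ work for the (B), (C), and PALP-diameter lookups, giving the claimed $CN\log N$ bound. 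Storage for $\Lz$ and its secondary quadtree is $CN$ since each $x \in E$ lies in $C$ members of $\Lz$ and the compressed quadtree has $O(N)$ nodes.
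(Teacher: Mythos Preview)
The paper does not give its own proof of this lemma: the three parts are simply imported from Sections~26--27 of \cite{FK09-Data-2}, as indicated in the statement itself. So there is no ``paper's proof'' to compare against; your sketch is an attempt to reconstruct what Fefferman--Klartag do. The spirit is right (compressed quadtree, point location, range-emptiness with witnesses), and your treatment of (B) and (C) is fine. But your handling of (A) has two genuine gaps.

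First, $\Lz$ is infinite: by Lemma~\ref{lem.CZ}(A1) it tiles all of $\rt$ with squares of sidelength at most $A_2^{-1}$, so ``enumerating $\Lz$'' and ``placing its squares into a second compressed quadtree'' is not well-posed, and your top-down recursion from ``a sufficiently large dyadic ancestor'' does not cover $\rt$. In \cite{FK09-Data-2} one does not enumerate $\Lz$; instead, given a query point $x$ one locates directly, by a binary search on scale, the unique $Q\in\Lz$ containing $x$ (testing the OK-predicate at each candidate scale in $C\log N$ time), and then finds the finitely many neighbours in $\Lambda(x)$ by the good-geometry bound in Lemma~\ref{lem.CZ}(A2). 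Only the ``nontrivial'' portion of $\Lz$ near $E$ is ever stored explicitly.

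Second, your OK-test is mis-costed. You write that you check $\diam\sigma(\A(x))\geq A_1\dq$ ``for each $x\in E\cap 5Q$ (of which there are $O(1)$ by good geometry once we have descended far enough)''. This is false: even for $Q\in\Lz$ the set $E\cap 5Q$ can contain arbitrarily many points (the OK condition allows this provided the diameter inequality holds for all of them), and during a top-down search the intermediate squares are not in $\Lz$ at all, so no bounded-cardinality conclusion is available. The OK-predicate must instead be evaluated as a pair of aggregate queries: a range-count to decide whether $\#(E\cap 5Q)\leq 1$, and a range-minimum of the precomputed values $\diam\sigma(\A(x))$ over $E\cap 5Q$. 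Both are $C\log N$ after augmenting the compressed quadtree with subtree counts and subtree minima of $\diam\sigma(\A(\cdot))$; this is the device actually used in \cite{FK09-Data-2}.
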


	\newcommand{\Le}{{\Lambda_{\rm empty}}}
	\newcommand{\Lsk}{\Lambda^{\sharp\sharp}}

	\begin{definition}
		We define the following subcollections of $ \Lz $:
		\LAQ{eq.Lsk-def}{$ \Lsk:= \set{Q \in \Ls : E \cap (1+c_G)Q \neq \void } $}
		
		\LAQ{eq.Ls-def}{$ \Ls:= \set{Q \in \Lz : E \cap 5Q \neq \void} $;}
		
		\LAQ{eq.Le-def}{$ \Le:=\set{Q \in \Lz \setminus \Ls : \dq < A_2^{-1}} $ with $ A_2 $ as in Definition \ref{def.CZ}.}
		
	\end{definition}

	We can think of $ \Lsk $ as the collection of squares with the most \qt{concentrated} information, $ \Ls $ as the largest collection of squares that contain information while still having good local geometry, and $ \Le $ as the collection of squares that do not contain information in their five-time dilation, but are sufficiently small to detect nearby accumulation of points in $ E $.

	We begin with the analysis of $ \Le $ and $ \Ls $.

	\begin{lemma}\label{lem.Ls-Le}
		After one-time work using at most $ CN\log N $ operations and $ CN $ storage, we can perform the following task using at most $ C\log N $ operations: Given $ Q\in \Lz $, we can decide if $ Q \in \Ls $, $ Q \in \Le $, or $ Q \in \Lz\setminus (\Ls \cup\Le) $. 
	\end{lemma}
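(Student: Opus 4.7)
The plan is to reduce the classification task to two applications of the subroutines already supplied by Lemma \ref{lem.FK-CZ}. Observe that from the definitions \eqref{eq.Ls-def} and \eqref{eq.Le-def}, classifying $Q \in \Lz$ amounts to deciding (i) whether $E \cap 5Q \neq \void$, and, in the negative case, (ii) whether $\dq < A_2^{-1}$. Condition (ii) is a single comparison between two numbers and takes $O(1)$ operations, so all of the work lies in (i).

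The one-time work will consist of running the preprocessing behind Lemma \ref{lem.FK-CZ}(B) and (C), which together cost at most $CN\log N$ operations and $CN$ storage. Given a query $Q \in \Lz$, I would carry out the following steps. First, invoke the $Empty$ procedure from Lemma \ref{lem.FK-CZ}(B) on $Q$; if $Empty(Q) = True$, then $E \cap 25Q = \void$, in particular $E \cap 5Q = \void$, so $Q \notin \Ls$. Second, if $Empty(Q) = False$, invoke $\erep$ from Lemma \ref{lem.FK-CZ}(C) to obtain a point $\erep(Q) \in E \cap 25Q$ and test whether $\erep(Q) \in 5Q$ by comparing coordinates. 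By the defining property of $\erep$, whenever $E \cap 5Q \neq \void$ we are guaranteed $\erep(Q) \in E \cap 5Q$; contrapositively, if $\erep(Q) \notin 5Q$ then $E \cap 5Q = \void$. Conversely, $\erep(Q) \in 5Q$ forces $E \cap 5Q \neq \void$. This therefore decides (i) exactly. Third, if $Q \notin \Ls$, compare $\dq$ to $A_2^{-1}$ to separate $\Le$ from $\Lz \setminus (\Ls \cup \Le)$.

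Each of the two subroutine calls costs $C\log N$ operations, and the coordinate comparisons and the test $\dq < A_2^{-1}$ cost $O(1)$, so the total query time is $C\log N$, as desired. There is no real obstacle here beyond noticing that the natural data structure from \cite{FK09-Data-2} tests occupancy on $25Q$ rather than $5Q$; the mildly subtle point is that $\erep(Q)$ is precisely the bridge that allows us to promote the $25Q$-level oracle into the $5Q$-level test we require, so that no new data structure has to be built.
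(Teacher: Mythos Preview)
Your proposal is correct and follows exactly the approach the paper takes: the paper's proof is the single sentence ``This is a direct application of Lemma~\ref{lem.FK-CZ}(B,C) to $Q$,'' and you have simply spelled out what that direct application looks like, including the contrapositive use of the $\erep$ guarantee to decide whether $E\cap 5Q$ is empty.
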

	
	\begin{proof}
		This is a direct application of Lemma \ref{lem.FK-CZ}(B,C) to $ Q $. 
	\end{proof}

	The next lemma tells us how to relay information to squares in $ \Le $.

	\begin{lemma}\label{lem.mu}
		We can compute a map
		\begin{equation}
			\mu:\Le \to \Ls
			\label{eq.mu-1}
		\end{equation}
		that satisfies
		\begin{equation}
			(1+c_G)\mu(Q) \cap 25Q \neq \void
			\for Q \in \Le\,.
			\label{eq.mu-2}
		\end{equation}
		The one-time work uses at most $ CN\log N $ operations and $ CN $ storage. After that, we can answer queries using at most $ C\log N $ operations. A query consists of a square $ Q \in \Le $, and the response to the query is another square $ \mu(Q) $ that satisfies \eqref{eq.mu-2}. 
		
	\end{lemma}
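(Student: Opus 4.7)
The plan is to exploit the Calder\'on-Zygmund condition: because $Q \in \Le$ has $\dq < A_2^{-1}$ and belongs to $\Lz$, its dyadic parent $Q^+$ must fail to be OK (Definition \ref{def.CZ}(B)). Since $A_2$ and $\dq$ are dyadic, we actually have $\dq \leq A_2^{-1}/2$, so $\delta_{Q^+} = 2\dq \leq A_2^{-1}$; the failure therefore cannot come from the sidelength constraint, and must mean $\#(E \cap 5Q^+) \geq 2$. In particular $E \cap 5Q^+ \neq \void$. Any witness point lies in $25Q$: since $Q \subset Q^+$, the center of $Q^+$ is within $\dq/2$ (in sup-norm) of the center of $Q$, so every point of $5Q^+$ is within $5\dq + \dq/2 < 25\dq/2$ of the center of $Q$, giving $5Q^+ \subset 25Q$.

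With this observation, $\mu(Q)$ is computed in two calls. First, set $x := \erep(Q^+)$ via Lemma \ref{lem.FK-CZ}(C); since $E \cap 5Q^+ \neq \void$, this returns a point $x \in E \cap 5Q^+ \subset E \cap 25Q$. Second, apply Lemma \ref{lem.FK-CZ}(A) to $x$ to produce the list $\Lambda(x) = \set{Q' \in \Lz : (1+c_G)Q' \ni x}$; this list is nonempty because $\Lz$ covers $\rt$ (Lemma \ref{lem.CZ}(A)). Define $\mu(Q)$ to be any element of $\Lambda(x)$.

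To verify correctness: since $x \in (1+c_G)\mu(Q) \subset 5\mu(Q)$ and $x \in E$, we get $E \cap 5\mu(Q) \neq \void$, so $\mu(Q) \in \Ls$ as required. Since $x \in (1+c_G)\mu(Q) \cap 25Q$, this intersection is nonempty, which is precisely \eqref{eq.mu-2}. For the complexity count, the preprocessing is just what is needed by the two invoked parts of Lemma \ref{lem.FK-CZ}, costing $CN\log N$ operations and $CN$ storage. Each query consists of one call to $\erep$, one computation of $\Lambda(x)$, and a constant-time selection, for a total of $C\log N$ operations. The only step requiring real care, and what I regard as the main (modest) obstacle, is justifying $E \cap 5Q^+ \neq \void$ from $Q^+$ being non-OK; this rests on the dyadic parity argument above to rule out the sidelength clause of Definition \ref{def.CZ}, leaving only the point-count clause as the possible source of failure.
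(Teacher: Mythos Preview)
Your proof is correct and follows essentially the same approach as the paper's. The only minor variation is that you apply $\erep$ to $Q^+$ (obtaining $x \in E \cap 5Q^+ \subset 25Q$), whereas the paper applies $\erep$ directly to $Q$ after first observing $E \cap 25Q \supset E \cap 5Q^+ \neq \void$; both choices produce a witness $x \in E \cap 25Q$, and from there the construction via $\Lambda(x)$ and the complexity accounting are identical.
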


	\begin{proof}
		
		Suppose $ Q \in \Le $. Then we have $ E \cap 5Q^+ \neq \void $. By the geometry of $ \Lz $, we have $ 5Q^+ \subset 25Q $. Hence, $ E \cap 25Q \neq \void $. Therefore, the map $ \erep $ in Lemma \ref{lem.FK-CZ}(C) is defined for $ Q $. 
		
		We set
		\begin{equation}
			x :=  \erep(Q) \subset E \cap 25Q,
			\label{eq.5.3.2}
		\end{equation}
		with $ \erep $ as in Lemma \ref{lem.FK-CZ}. Note that $ x \notin 5Q $, since $ Q \in \Le $. 
		
		Let $ \Lambda(x) \subset \Lz $ be as in Lemma \ref{lem.FK-CZ}(A). Let $ Q' \in \Lambda(x) $. By the defining property of $ \Lambda(x) $ and the fact that $ x \in E $, we have $ Q' \in \Ls $. Set
		\begin{equation*}
			\mu(Q) := Q' \in \Ls.
		\end{equation*}
		By the previous comment, we have
		\begin{equation}
			(1+c_G)\mu(Q) \ni x.
			\label{eq.5.3.1}
		\end{equation}
		Combining \eqref{eq.5.3.2} and \eqref{eq.5.3.1}, we see that $ (1+c_G)\mu(Q)\cap 25Q \neq \void $. \eqref{eq.mu-2} is satisfied.
		
		By Lemma \ref{lem.FK-CZ}(A,C), the tasks $ \Lambda(\cdot) $ and $ \erep(\cdot) $ require at most $ C\log N $ operations, after one-time work using at most $ CN\log N $ operations and $ CN $ storage. Therefore, computing $ \mu(Q) $ requires at most $ C\log N $ operations, after one-time work using at most $ CN\log N $ operations and $ CN $ storage.

		This proves Lemma \ref{lem.mu}.
	\end{proof}

	\begin{lemma}\label{lem.uQ}
		After one-time work using at most $ CN\log N $ operations and $ CN $ storage, we can perform the following task using at most $ C\log N $ operations: Given $ Q \in \Ls $, compute a pair of unit vectors $ u_Q, u_Q^\perp \in \rt $, such that the following hold.
		\begin{enumerate}[(A)]
			\item $ u_Q $ is orthogonal to $ u_Q^\perp $, and the orthogonal system $ [u_Q^\perp, u_Q] $ has the same orientation as $ [e_1, e_2] $.
			\item Let $ \rho $ be the rotation about the origin specified by $ u_Q \mapsto e_2 $, then there exists a function $ \phi \in \ct(\R) $ that satisfies \eqref{eq.phi-1} and \eqref{eq.phi-2} with this particular $ \rho $. 
		\end{enumerate}
		
	\end{lemma}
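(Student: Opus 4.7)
The plan is to reduce each query to a direct invocation of Lemma~\ref{lem.CZ}(B), which prescribes the desired rotation $\rho$ as soon as we supply a point $x_0 \in E \cap 5Q$ together with a unit vector $u_0$ achieving $\diam\sigma(\A(x_0)) = p_{\sigma(\A(x_0))}(u_0)$ in the sense of \eqref{eq.diam-def}. The algorithm must therefore only (i) exhibit such an $x_0$ for each query $Q \in \Ls$, and (ii) read off the diameter direction of the (precomputed) polygon $\sigma(\A(x_0))$.

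During the one-time work I would compute the list of PALPs $\{\A(x) : x \in E\}$ as in Lemma~\ref{lem.FK-palp}, together with the data structures supporting Lemma~\ref{lem.FK-CZ}; by those lemmas this costs at most $CN\log N$ operations and $CN$ storage. For each $x \in E$ I would store, alongside $\A(x)$, a description of the centrally symmetric convex set $\sigma(\A(x)) \subset \P$ viewed inside $\rt$ via the identification \eqref{eq.jet-id}, and I would precompute a unit vector $u(x)$ realizing $\diam\sigma(\A(x))=p_{\sigma(\A(x))}(u(x))$. Since each $\A(x)$ has length at most $3$ and depth $D_0$ by Lemma~\ref{lem.FK-palp}(A), $\sigma(\A(x))$ is cut out by $O(1)$ linear constraints in the two-dimensional space $\P$, so both the polygon and the direction $u(x)$ are of constant size and are obtained in $O(1)$ operations per point. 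In the degenerate case where $\sigma(\A(x))$ has no distinguished direction (for example when $\sigma(\A(x))$ is all of $\P$, which may occur in $\Lsk$ squares with $\#(E\cap 5Q)=1$) I default to $u(x):=e_2$; this is still compatible with Lemma~\ref{lem.CZ}(B), since the graphing function $\phi$ may then be taken to be constant.

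To answer a query $Q \in \Ls$ I would compute $x_0 := \erep(Q)$ via Lemma~\ref{lem.FK-CZ}(C) in at most $C\log N$ operations; because $Q \in \Ls$ we have $E \cap 5Q \neq \void$ and hence $x_0 \in E \cap 5Q$. I would then retrieve the precomputed $u_0 := u(x_0)$ in $O(1)$ operations, set $u_Q := u_0$, and take $u_Q^\perp$ to be the unit vector perpendicular to $u_Q$ chosen so that $[u_Q^\perp, u_Q]$ has the same orientation as $[e_1,e_2]$. Conclusion (A) then holds by construction, and conclusion (B) is precisely the content of Lemma~\ref{lem.CZ}(B1, B2) applied to this $x_0$ and $u_0$.

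The only substantive step is the extraction of the diameter direction from $\sigma(\A(x))$, and I expect this to be the main (but still routine) point to verify carefully: the PALP's uniform length and depth bounds from Lemma~\ref{lem.FK-palp}(A) reduce the computation of $u(x)$ to a constant-size optimization (maximizing the gauge $p_{\sigma(\A(x))}$ over an $O(1)$-size list of candidate vertices or unbounded directions), so each such precomputation runs in $O(1)$ operations and contributes only $CN$ to the one-time work. The query cost is then dominated by the $C\log N$ call to $\erep$ in Lemma~\ref{lem.FK-CZ}(C), giving the stated complexity.
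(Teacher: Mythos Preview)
Your proposal is correct and follows essentially the same approach as the paper: locate $x_0=\erep(Q)\in E\cap 5Q$ via Lemma~\ref{lem.FK-CZ}(C), read off the diameter direction of $\sigma(\A(x_0))$, and invoke Lemma~\ref{lem.CZ}(B) for conclusion~(B). The only cosmetic difference is that you precompute $u(x)$ for every $x\in E$ during the one-time work, whereas the paper extracts the direction of the longer diagonal of the parallelogram $\sigma(\A(x_0))$ at query time; both cost $O(1)$ per point and yield identical complexity bounds, and your explicit handling of the degenerate case $\#(E\cap 5Q)\le 1$ is a nice touch that the paper addresses only in the remark following Lemma~\ref{lem.CZ}.
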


	\begin{proof}
		Fix $ Q \in \Ls $. This means that $ E \cap 5Q \neq \void $. In particular, $ \erep(Q) $ is defined, and by Lemma \ref{lem.FK-CZ}(C),
		\begin{equation*}
			x_0 := \erep(Q) \in E \cap 5Q.
		\end{equation*}
		Computing $ x_0 $ requires at most $ C\log N $ operations, after one-time work using at most $ CN\log N $ operations and $ CN $ storage. 
		
		Let $ \A(x_0) $ be as in Lemma \ref{lem.FK-palp}, and let $ \sigma(\A(x_0)) $ be as in \eqref{eq.sigma(A)-def}. By Lemma \ref{lem.FK-palp}(B) (with $ \phi \equiv 0 $), any $ P \in \sigma(\A(x_0)) $ must satisfy $ P(x_0) = 0 $. by Lemma \ref{lem.FK-palp}(A) and definitions \eqref{eq.K1}, \eqref{eq.sigma(A)-def} of $ \sigma(\A(x_0)) $, we see that $ \sigma(\A(x_0)) $ is a two-dimensional parallelogram in $ \P $ centered at the zero polynomial. Therefore, we have
		\begin{equation*}
			\diam \sigma(\A(x_0)) = length(\Delta_0),
		\end{equation*}
		where $ \diam $ is defied in \eqref{eq.diam-def} and $ \Delta_0 $ is the longer diagonal of $ \sigma(\A(x_0)) $.
		
		Set $ u_Q $ to be a unit vector parallel to $ \Delta_0 $. Lemma \ref{lem.uQ}(B) then follows from Lemma \ref{lem.CZ}(B).
		
		We compute another vector $ u_Q^\perp $ such that $ \set{u_Q, u_Q^\perp} $ satisfies Lemma \ref{lem.uQ}(A). Computing $ \set{u_Q, u_Q^\perp} $ from $ \sigma(\A(x_0)) $ uses elementary linear algebra, and requires at most $ C $ operations.
		
		Lemma \ref{lem.uQ} is proved.

	\end{proof}

	\begin{lemma}\label{lem.rep}
		After one-time work using at most $ CN\log N $ operations and $ CN $ storage, we can perform the following task using at most $ C\log N $ operations: Given $ Q \in \Lz $, we can compute a point $ \xqs \in Q $ such that
		\begin{equation}
			\dist{\xqs}{E} \geq c_0\dq
			\label{eq.xqs}
		\end{equation}
		for some universal constant $ c_0 \geq 0 $.
	\end{lemma}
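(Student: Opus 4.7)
The plan is to handle the two qualitatively different kinds of square in $\Lz$ separately. After one-time preprocessing of $CN\log N$ operations and $CN$ storage, I use Lemma \ref{lem.Ls-Le} to classify, in $C\log N$ time, whether $Q \in \Ls$ or $Q \in \Lz \setminus \Ls$. If $Q \notin \Ls$, then $E \cap 5Q = \void$, and I return $\xqs := $ center of $Q$. Any $z \in E$ then lies outside $5Q$, so $\dist{\xqs}{E} \geq 2\dq$, well above any universal threshold $c_0\dq$.

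If $Q \in \Ls$, I invoke Lemma \ref{lem.uQ} to obtain the unit vectors $u_Q, u_Q^\perp$ and (implicitly) a function $\phi \in \ct(\R)$ whose graph, in the rotated frame $\rho : u_Q \mapsto e_2$, contains $\rho(E \cap 5Q)$ and satisfies $\abs{\phi'} \leq CA_1^{-1}$ by Lemma \ref{lem.CZ}(B). I also fetch $x_0 := \erep(Q) \in E \cap 5Q$ via Lemma \ref{lem.FK-CZ}(C). Writing $c_Q$ for the center of $Q$, I set
\begin{equation*}
\xqs := \begin{cases}
c_Q - \tfrac{\dq}{4}\, u_Q & \text{if } (x_0 - c_Q)\cdot u_Q \geq 0, \\
c_Q + \tfrac{\dq}{4}\, u_Q & \text{otherwise.}
\end{cases}
\end{equation*}
Since $\abs{\xqs - c_Q} = \dq/4 < \dq/2$, the point $\xqs$ lies in the disk inscribed in $Q$, and in particular $\xqs \in Q$.

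To verify $\dist{\xqs}{E} \geq c_0\dq$: any $z \in E \setminus 5Q$ obeys $\abs{z - c_Q}_\infty > 5\dq/2$, hence $\abs{z - \xqs} \geq 2\dq$. For $z \in E \cap 5Q$, both $z$ and $x_0$ lie on the graph of $\phi$, and the relevant interval of $t$-values has length $O(\dq)$; the slope bound $\abs{\phi'} \leq CA_1^{-1}$ then yields $\abs{(z - x_0)\cdot u_Q} \leq CA_1^{-1}\dq$. Combined with the sign convention on $\xqs$, this gives
\begin{equation*}
\abs{(\xqs - z)\cdot u_Q} \geq \tfrac{\dq}{4} - CA_1^{-1}\dq \geq \tfrac{\dq}{8},
\end{equation*}
provided $A_1$ is the sufficiently large constant fixed in Definition \ref{def.CZ}. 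Hence $\abs{\xqs - z} \geq \dq/8$, and $c_0 := 1/8$ suffices. All three queries (to Lemma \ref{lem.Ls-Le}, Lemma \ref{lem.uQ}, and Lemma \ref{lem.FK-CZ}(C)) cost $C\log N$ after the stated preprocessing, and the remaining computation is $O(1)$ arithmetic, so the stated complexity follows. The only delicate point is the quantitative interplay between the slope bound and the displacement $\dq/4$, which forces $A_1$ to have been chosen large enough at the outset; no new difficulty arises beyond what has already been packaged into the earlier lemmas.
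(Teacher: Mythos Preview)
Your proof is correct and follows essentially the same approach as the paper: handle $Q\notin\Ls$ by taking the center, and for $Q\in\Ls$ displace the center by $\dq/4$ along the direction $u_Q$ transverse to the nearly flat graph containing $E\cap 5Q$. The only cosmetic difference is in the branching: the paper tests whether the center already lies at distance $\geq \dq/1024$ from the cone $Z(x_0)=\{y:|(y-x_0)\cdot u_Q|\leq CA_1^{-1}|y-x_0|\}$ and shifts by $+\tfrac{\dq}{4}u_Q$ only if not, whereas you always shift but choose the sign of the displacement according to the sign of $(x_0-c_Q)\cdot u_Q$; both devices ensure $|(\xqs-x_0)\cdot u_Q|\gtrsim \dq$, and the rest of the estimate is identical.
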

	
	\begin{proof}
		Let $ Q \in \Lz$ be given. 
		
		Suppose $ Empty(Q) = True $, with $ Empty(\cdot) $ as in Lemma \ref{lem.FK-CZ}(B). We set
		\begin{equation*}
			\xqs := center(Q).
		\end{equation*}
		It is clear that $ \xqs \in Q $ and \eqref{eq.xqs} holds.
		
		Suppose $ Empty(Q) = False $. Let $ x_0 := \erep(Q) \in E \cap 25Q $. 
		
		Suppose $ x_0 \notin 5Q $, then $ E \cap 5Q = \void$ by Lemma \ref{lem.FK-CZ}(C). Again, we set
		\begin{equation*}
			\xqs:= center(Q).
		\end{equation*}
		It is clear that $ \xqs \in Q $ and \eqref{eq.xqs} holds.

		Suppose $ x_0 \in 5Q $. This means that $ Q \in \Ls $ with $ \Ls $ as in \eqref{eq.Ls-def}. Let $ u_Q $ be as in Lemma \ref{lem.uQ}.

		By Lemma \ref{lem.CZ}(B), we have $ E \cap 5Q \subset \set{(t,\phi(t)):t \in \R} $ up to the rotation $ u_Q\mapsto e_2 $, and the function $ \phi $ satisfies $ \abs{\ddtm\phi(t)} \leq CA_1^{-1}\dq^{1-m} $ for $ m = 1,2 $, with $ A_1 $ as in Definition \ref{def.CZ}. Therefore, by the defining property of $ u_Q $ in Lemma \ref{lem.uQ}, we have 
		\begin{equation*}
			E \cap 5Q \subset \set{y \in \rt: \abs{(y-x_0)\cdot u_Q} \leq CA_1^{-1}\abs{y-x_0}}=:Z(x_0).
		\end{equation*}
		
		Suppose $ \dist{center(Q)}{Z(x_0)} \geq \dq/1024 $. We set
		\begin{equation*}
			\xqs := center(Q).
		\end{equation*}
		In this case, it is clear that $ \xqs \in Q $ and \eqref{eq.xqs} holds.
		
		Suppose $ \dist{center(Q)}{Z(x_0)} < \dq/1024 $. We set 
		\begin{equation*}
			\xqs:= center(Q) + \frac{\dq}{4}\cdot  u_Q.
		\end{equation*}
		It is clear that $ \xqs \in Q $. For sufficiently large $ A_1 $, we also have $ \dist{\xqs}{Z(x_0)} \geq c\dq $ for some constant $ c $ depending only on $ A_1 $. Thus, \eqref{eq.xqs} holds.
		
		After one-time work using at most $ CN\log N $ operations and $ CN $ storage, the procedure $ Empty(Q) $ requires at most $ C\log N $ operations by Lemma \ref{lem.FK-CZ}(B); the procedure $ \erep(Q) $ requires at most $ C\log N $ operations by Lemma \ref{lem.FK-CZ}(C); computing the vector $ u_Q $ requires at most $ C\log N $ operations; and computing the distance between $ center(Q) $ and $ Z(x_0) $ is a routine linear algebra problem, and requires at most $ C $ operations. 
		
		Lemma \ref{lem.rep} is proved. 
	\end{proof}

	We now turn our attention to $ \Lsk $ as in \eqref{eq.Lsk-def}. 
	
	\begin{lemma}\label{lem.Lsk}
		Using at most $ CN\log N $ operations and $ CN $ storage, we can compute the list $ \Lsk $ as in \eqref{eq.Lsk-def}. 
	\end{lemma}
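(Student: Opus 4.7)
The plan is to enumerate $\Lsk$ by walking through the data points rather than the squares. Note that any $Q \in \Lsk$ satisfies $E \cap (1+c_G)Q \neq \void$, so there exists at least one $x \in E$ with $x \in (1+c_G)Q$; conversely, any $Q \in \Lz$ with $(1+c_G)Q \cap E \neq \void$ automatically has $5Q \cap E \neq \void$ and therefore lies in $\Ls$, so it lies in $\Lsk$. Consequently
\[
    \Lsk = \bigcup_{x \in E} \Lambda(x),
\]
where $\Lambda(x) = \{Q \in \Lz : (1+c_G)Q \ni x\}$ is the list delivered by Lemma \ref{lem.FK-CZ}(A).

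With this identity in hand, the algorithm is straightforward. First I would perform the one-time preprocessing of Lemma \ref{lem.FK-CZ}, using at most $CN\log N$ operations and $CN$ storage. Then, looping over the $N$ points $x \in E$, I would call the query in Lemma \ref{lem.FK-CZ}(A) to obtain $\Lambda(x)$ in at most $C\log N$ operations per point; by the bounded-overlap statement in Lemma \ref{lem.CZ}(A2), each $\Lambda(x)$ has cardinality at most a universal constant. Concatenating these lists yields a multiset of at most $CN$ dyadic squares containing every element of $\Lsk$, produced in total time $CN\log N$ with storage $CN$.

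The one subtlety is deduplication: the same CZ square $Q$ may be produced by several distinct $x \in E$ (again at most a universal constant many, by Lemma \ref{lem.CZ}(A2)). To remove repeats, I would encode each dyadic square by its canonical triple $(i,j,k)$ with $Q = [2^k i, 2^k(i+1)) \times [2^k j, 2^k(j+1))$, sort the concatenated list of at most $CN$ such triples lexicographically in $CN\log N$ operations, and then make a single linear pass to discard consecutive duplicates. The output is the list $\Lsk$, stored as a sorted array, with the overall budget of $CN\log N$ operations and $CN$ storage.

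I do not expect any serious obstacle here; the main point is simply the observation that $\Lsk$ is covered by $\bigcup_{x \in E} \Lambda(x)$ and that this union is automatically of size $O(N)$ thanks to the good geometry of $\Lz$, so the naive enumeration-and-sort strategy meets the claimed complexity bounds.
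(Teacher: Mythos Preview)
Your proposal is correct and matches the paper's one-line proof (``This is a direct application of Lemma~\ref{lem.FK-CZ}(A) to each $x \in E$''), fleshing out the identity $\Lsk=\bigcup_{x\in E}\Lambda(x)$ and the complexity details the paper leaves implicit. One harmless slip: your parenthetical that each fixed $Q$ is repeated only a universal-constant number of times is false in general (the multiplicity is $\#(E\cap(1+c_G)Q)$, which may be as large as $N$), but your argument never actually uses this---you already bound the total multiset size by $CN$ via $\#\Lambda(x)\le C'$, which is all the sort-and-scan step needs.
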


	\begin{proof}
		This is a direct application of Lemma \ref{lem.FK-CZ}(A) to each $ x \in E $. 
	\end{proof}

	The next lemma states that we can efficiently sort the data contained in squares in $ \Lsk $. 
	
	\begin{lemma}\label{lem.sort}
		Using at most $ CN\log N $ operations and $ CN $ storage, we can compute the following.
		
		For each $ Q \in \Lsk $ with $ \Lsk $ as in \eqref{eq.Lsk-def}, we can compute a sorted list of numbers 
		\begin{equation*}
			Proj_{u_Q^\perp}(E \cap (1+c_G)Q - \erep(Q)) \subset \R,
		\end{equation*}
		where $ u_Q^{\perp} $ is as in Lemma \ref{lem.uQ},  $Proj_{u_Q^\perp} $ is the orthogonal projection onto $ \R u_Q^{\perp} $, and $ \erep(Q) $ is as in Lemma \ref{lem.FK-CZ}(C).
	\end{lemma}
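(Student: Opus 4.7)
The plan is to first build an inverted index that, for each $Q \in \Lsk$, explicitly lists the set $L(Q) := E \cap (1+c_G)Q$, then process each $Q$ independently. The good geometry of $\Lz$ from Lemma~\ref{lem.CZ}(A) is what makes everything fit within the target complexity: by property (A2), each point $x \in E$ lies in $(1+c_G)Q$ for at most $C'$ squares $Q \in \Lz$, so
\begin{equation*}
\sum_{Q \in \Lsk}\#(L(Q)) \;=\; \sum_{x \in E}\#\{Q \in \Lz : (1+c_G)Q \ni x\} \;\leq\; C' N.
\end{equation*}

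First, invoke Lemma~\ref{lem.Lsk} to compute $\Lsk$ in $CN\log N$ operations and $CN$ storage. Next, for each $x \in E$, invoke Lemma~\ref{lem.FK-CZ}(A) to compute $\Lambda(x)$; each call costs $C\log N$ operations and returns a list of size at most $C'$, so the total cost is $CN\log N$ operations. Inverting these lists (standard bucketing against the already-computed collection $\Lsk$) produces the family $\{L(Q) : Q \in \Lsk\}$ in $CN$ further operations and $CN$ total storage, by the displayed bound above.

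Now process each $Q \in \Lsk$ in turn. Use Lemma~\ref{lem.uQ} to compute the unit vector $u_Q^\perp$ and Lemma~\ref{lem.FK-CZ}(C) to compute $\erep(Q)$, each in $C\log N$ operations; since $\#(\Lsk) \leq CN$, this contributes $CN\log N$ operations overall. For the fixed $Q$, compute the real numbers $\{(x - \erep(Q)) \cdot u_Q^\perp : x \in L(Q)\}$ in $C\cdot\#(L(Q))$ operations, then sort them by any comparison-based $O(k\log k)$ algorithm. The total sorting cost is
\begin{equation*}
\sum_{Q \in \Lsk} C\cdot\#(L(Q))\log(\#(L(Q))) \;\leq\; C\log N \sum_{Q \in \Lsk}\#(L(Q)) \;\leq\; CN\log N,
\end{equation*}
and storage is $CN$ since each sorted list has size $\#(L(Q))$.

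The only mild subtlety (not really an obstacle) is to make sure the inversion step builds the lists $L(Q)$ keyed by the concrete records for squares in $\Lsk$ rather than by abstract dyadic coordinates; this is handled by using the output handles returned by Lemma~\ref{lem.FK-CZ}(A) as keys into a hash table or a sort-based grouping of the pairs $(Q,x)$ generated by the $\Lambda(x)$ queries, which fits in the $CN\log N$ budget. Combining all three phases yields the claimed complexity.
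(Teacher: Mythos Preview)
Your proof is correct and follows essentially the same approach as the paper: both arguments invert the $\Lambda(x)$ queries to build the lists $E\cap(1+c_G)Q$ for $Q\in\Lsk$, use the bounded-intersection property of Lemma~\ref{lem.CZ}(A) to bound the total list size by $CN$, compute $u_Q^\perp$ and $\erep(Q)$ per square via Lemmas~\ref{lem.uQ} and~\ref{lem.FK-CZ}(C), and then project and sort each list for a total cost of $CN\log N$. Your remark about keying the inversion step to concrete square records is a small implementation detail the paper leaves implicit.
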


	\begin{proof}
		By the bounded intersection property in Lemma \ref{lem.CZ}(A), we have
		\begin{equation}
			\#(\Lsk) \leq CN.
			\label{eq.5.8.2}
		\end{equation}
		
		From the definitions of $ \Lsk $ and $ \Ls $ in \eqref{eq.Lsk-def} and \eqref{eq.Ls-def}, we see that $ \Lsk \subset \Ls $. Therefore, we can compute $ \erep(Q) $ and $ u_Q^\perp $ for each $ Q \in \Lsk $ using at most $ C\log N $ operations, by Lemma \ref{lem.FK-CZ}(B) and Lemma \ref{lem.uQ}.
		
		Recall from Lemma \ref{lem.Lsk} that we can compute the list $ \Lsk $ by computing each $ \Lambda(x) $ for $ x \in E $, with $ \Lambda(x) $ as in Lemma \ref{lem.FK-CZ}(A). During this procedure, we can store the information $ (1+c_G)Q \ni x $ for $ Q \in \Lambda(x) $.
		
		By the bounded intersection property in Lemma \ref{lem.CZ}(A), we have
		\begin{equation}
			\sum_{Q \in \Lsk}\#(E \cap (1+c_G)Q) \leq CN.
			\label{eq.5.8.1}
		\end{equation}
		By Lemma \ref{lem.FK-CZ}(A) and \eqref{eq.5.8.1}, we can compute the list
		\begin{equation*}
			\set{E \cap (1+c_G)Q : Q \in \Lsk}
		\end{equation*}
		using at most $ CN\log N $ operations and $ CN $ storage. Then, by Lemma \ref{lem.FK-CZ}(C), Lemma \ref{lem.uQ}, and \eqref{eq.5.8.2}, we can compute the {\em unsorted} list
		\begin{equation}
			Proj_{u_Q^\perp}(E \cap (1+c_G)Q - \erep(Q))
			\label{eq.list}
		\end{equation}
		for each $ Q \in \Lsk $
		using at most $ CN\log N $ operations and $ CN $ storage. 
		
		For each $ Q \in \Lsk $, we can sort the list $ Proj_{u_Q^\perp}(E \cap (1+c_G)Q - \erep(Q)) $ using at most $ CN_Q\log N_Q $ operations, where $ N_Q := \#(E \cap (1+c_G)Q) $. By \eqref{eq.5.8.1}, we can sort the all the lists of the form \eqref{eq.list} associated with each $ Q \in \Lsk $ using at most $ CN\log N $ operations. 
		
		Lemma \ref{lem.sort} is proved.

	\end{proof}

	\subsection{Local clusters}

	The next lemma shows how to relay local information to the point $ \xqs $.

	\begin{lemma}\label{lem.SQ}
		Let $ Q \in \Ls $. Let $ \xqs $ be as in Lemma \ref{lem.rep}. Let $ x \in E \cap 5Q $. Let $ \A(x) $ be as in Lemma \ref{lem.FK-palp}. Let $ S(\A(x)) $ be as in \eqref{eq.S(A)-def}. Then 
		\begin{equation}
			\sigma(\xqs,S(\A(x))) \subset C\cdot \sk(\xqs,16).
			\label{eq.5.6.0}
		\end{equation}
		
	\end{lemma}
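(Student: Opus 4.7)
} Since $\sk(\xqs,16) = \bigcap_{S' \subset E,\, \#(S') \leq 16} \sigma(\xqs,S')$, it suffices to fix an arbitrary $S' \subset E$ with $\#(S') \leq 16$ and prove $\sigma(\xqs, S(\A(x))) \subset C\cdot \sigma(\xqs, S')$. The plan is to start with a witness $\phi$ for membership in $\sigma(\xqs, S(\A(x)))$, appeal to Lemma \ref{lem.sigma-main} at the point $x \in E \cap 5Q$ to obtain a second function $\psi$ that vanishes on $S'$ and matches the $1$-jet of $\phi$ at $x$, and finally correct the $1$-jet at $\xqs$ by a tiny, compactly supported bump centered at $\xqs$ that leaves $E$ untouched thanks to \eqref{eq.xqs}.

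In detail, fix $P \in \sigma(\xqs, S(\A(x)))$, so there is $\phi \in \ct(\rt)$ with $\norm{\phi}_{\ct(\rt)} \leq 1$, $\phi|_{S(\A(x))} = 0$, and $\jet_\xqs \phi = P$. Since $x \in E$ and $\jet_x \phi \in \sigma(x, S(\A(x)))$, Lemma \ref{lem.sigma-main} (applied at $x$) yields $\jet_x \phi \in C\cdot \sk(x, 16) \subset C\cdot \sigma(x, S')$. Hence there exists $\psi \in \ct(\rt)$ with $\norm{\psi}_{\ct(\rt)} \leq C$, $\psi|_{S'} = 0$, and $\jet_x \psi = \jet_x \phi$. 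Now let $P_0 := \jet_\xqs(\phi - \psi) \in \P$ and choose a cutoff $\eta \in C^2_c(\rt)$ supported in $B(\xqs, c_0 \dq/2)$ with $\eta \equiv 1$ on $B(\xqs, c_0 \dq/4)$ and $|\d^k \eta| \leq C\dq^{-k}$ for $k=0,1,2$; such $\eta$ exists because of the separation \eqref{eq.xqs}. Set $\chi := \eta \cdot P_0$ and $\tilde{\phi} := \psi + \chi$.

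The key estimates come from the fact that $\jet_x(\phi - \psi) \equiv 0$ combined with $|\xqs - x| \leq C\dq$ (since $\xqs \in Q$ and $x \in 5Q$). Taylor's theorem applied to $\phi - \psi$ (whose $\ct$-norm is at most $C$) gives $|(\phi-\psi)(\xqs)| \leq C\dq^2$ and $|\grad(\phi-\psi)(\xqs)| \leq C\dq$, so that on $B(\xqs, c_0\dq)$ we have $|P_0(y)| \leq C\dq^2$ and $|\grad P_0(y)| \leq C\dq$. Since $P_0$ is affine, $\d^2 P_0 \equiv 0$, and the Leibniz expansion
\begin{equation*}
\d^2 \chi = (\d^2 \eta) P_0 + 2\,(\d\eta)(\d P_0) + \eta\,\d^2 P_0
\end{equation*}
yields $\norm{\chi}_{\ct(\rt)} \leq C$ after a routine check using $|\d^k \eta| \leq C\dq^{-k}$. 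Since $\eta$ is supported in $B(\xqs, c_0\dq/2)$, which is disjoint from $E$ by \eqref{eq.xqs}, we have $\chi|_E = 0$; in particular $\tilde{\phi}|_{S'} = \psi|_{S'} + \chi|_{S'} = 0$. Because $\eta \equiv 1$ near $\xqs$, $\jet_\xqs \chi = P_0$, so $\jet_\xqs \tilde{\phi} = \jet_\xqs \psi + P_0 = \jet_\xqs \phi = P$. Finally $\norm{\tilde{\phi}}_{\ct(\rt)} \leq \norm{\psi}_{\ct(\rt)} + \norm{\chi}_{\ct(\rt)} \leq C$, so $P \in C\cdot \sigma(\xqs, S')$, completing the proof.

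The only delicate point is the balance in the cutoff estimate: the radius $c_0 \dq$ is forced on us by \eqref{eq.xqs}, and this is exactly matched by the $\dq^2$ and $\dq$ decay of $P_0$ and $\grad P_0$, which in turn rest on the cancellation $\jet_x(\phi - \psi) = 0$ produced by Lemma \ref{lem.sigma-main}. Without first transferring the constraint to the nearby point $x \in E$ via Lemma \ref{lem.sigma-main}, a naive cutoff of $\phi$ alone would lose a factor of $\dq^{-1}$ in the $\ct$-norm.
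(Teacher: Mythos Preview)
Your proof is correct and takes a genuinely different route from the paper's. Both arguments begin the same way: pick a witness $\phi$ for $P \in \sigma(\xqs, S(\A(x)))$, then apply Lemma~\ref{lem.sigma-main} at $x \in E$ to obtain a second function $\psi$ vanishing on the arbitrary $S' \subset E$ with $\jet_x\psi = \jet_x\phi$. From there the paths diverge. The paper works at the level of Whitney fields: it takes the jets $(P^y)_{y \in S'}$ coming from a witness for $\jet_x\phi \in C\cdot\sigma(x,S')$, replaces the jet $P = \jet_x\phi$ at $x$ by the jet $P_0 = \jet_{\xqs}\phi$ at $\xqs$, and verifies the $W^2$ compatibility estimates for $(P_0,(P^y)_{y \in S'})$ on $S' \cup \{\xqs\}$ via the triangle inequality (using both $|\xqs - x| \leq C\dq$ and $\dist{\xqs}{E} \geq c_0\dq$), then invokes the classical Whitney extension theorem to produce the witness. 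You instead build the witness directly as $\tilde\phi = \psi + \eta\cdot\jet_{\xqs}(\phi-\psi)$, using a cutoff $\eta$ localized in $B(\xqs, c_0\dq/2)$; the separation \eqref{eq.xqs} guarantees $\eta$ avoids $E$, and the cancellation $\jet_x(\phi-\psi) = 0$ together with $|\xqs - x| \leq C\dq$ gives exactly the $\dq^2$ and $\dq$ decay needed to balance the $\dq^{-k}$ growth of $\d^k\eta$. Your approach is more elementary and self-contained (no appeal to the Whitney extension theorem), while the paper's approach stays within the Whitney-field framework used throughout and generalizes more transparently to higher smoothness.
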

	
	\begin{proof}
		\newcommand{\sax}{S(\A(x))}
		\renewcommand{\set}[1]{\{#1\}}
		Fix $ x $ as in the hypothesis. By our choice of $ \xqs $ in Lemma \ref{lem.rep}, we have
		\begin{equation}
			\abs{\xqs - x} \geq C\dq. 
			\label{eq.5.10.1}
		\end{equation}
		
		Let $ P_0 \in \sigma(\xqs, S(\A(x))) $.
		By the definition of $ \sigma $, there exists $ \phi\in \ct(\rt) $ with $ \norm{\phi}_{\ct(\rt)} \leq 1 $, $ \phi|_{S(\A(x))} = 0 $, and $ \jet_\xqs\phi = P_0 $. Set $ P := \jet_x\phi $. Then
		\begin{equation*}
			P \in \sigma(x,\sax). 
		\end{equation*}
		Since $ x \in E $, by Lemma \ref{lem.sigma-main}, we have
		\begin{equation*}
			P \in \sk(x,16). 
		\end{equation*}
		
		Let $ S \subset E $ with $ \#(S) \leq 16 $. By the definition of $ \sk $ in \eqref{eq.sigma-def} and Taylor's theorem, there exists a Whitney field $ \vec{P} = (P,(P^y)_{y \in S}) \in \wt(S\cup\set{x}) $, with $ \norm{\vec{P}}_{\wt(S\cup\set{x})} \leq C $ and $ P^y(y) = 0 $ for $ y \in S $. 
		
		Consider another Whitney field $ \vec{P}_0 = (P_0, (P^y)_{y \in S}) \in \wt(S\cup\set{\xqs}) $ defined by replacing $ P $ by $ P_0 $ in $ \vec{P} $. By the classical Whitney Extension Theorem for finite sets, it suffices to show that $ \vec{P}_0 $ satisfies 
		\begin{equation}\label{eq.5.10.2}
			P^y(y) = 0\for y \in S, \text{ and }
		\end{equation}
		\begin{equation}\label{eq.5.10.3}
			\norm{\vec{P}_0}_{\wt(S\cup\set{\xqs})} \leq C.
		\end{equation}
		
		Note that \eqref{eq.5.10.2} is obvious by construction.
		
		We turn to \eqref{eq.5.10.3}.
		
		Since $ P_0 = \jet_\xqs\phi $ and $ P = \jet_x\phi $, Taylor's theorem implies 
		\begin{equation}
			\abs{\d^\alpha (P - P_0)(\xqs)}, \abs{\d^\alpha(P - P_0)(x)} \leq C\abs{x - \xqs}^{2-\abs{\alpha}}
			\for\abs{\alpha} \leq 1.
			\label{eq.5.10.4}
		\end{equation}
		Since the Whitney field $ \vec{P} = (P,(P^y)_{y \in S}) $ satisfies $ \norm{\vec{P}}_{\wt(S\cup\set{x})} \leq C $, we have
		\begin{equation}
			\norm{(P^y)_{y \in S}}_{\wt(S)} \leq C, 
			\label{eq.5.10.5}
		\end{equation}
		and
		\begin{equation}
			\abs{\d^\alpha(P - P^y)(x)}, \abs{\d^\alpha(P - P^y)(y)} \leq C\abs{x - y}^{2-\abs{\alpha}}
			\for\abs{\alpha} \leq 2, y \in S.
			\label{eq.5.10.6}
		\end{equation}
		Applying the triangle inequality to \eqref{eq.5.10.4} and \eqref{eq.5.10.6}, and using \eqref{eq.5.10.1}, we see that
		\begin{equation}
			\abs{\d^\alpha (P_0 - P^y)(\xqs)}, \abs{\d^\alpha (P_0 - P^y)(y)} \leq C\abs{\xqs - y}^{2-\abs{\alpha}}
			\for\abs{\alpha} \leq 1.
			\label{eq.5.10.7}
		\end{equation}
		Moreover, since $ P_0 \in \sigma(\xqs,\sax) $, we have
		\begin{equation}
			\abs{\d^\alpha P_0(\xqs)} \leq 1
			\for\abs{\alpha} \leq 1.
			\label{eq.5.10.8}
		\end{equation}
		Then, \eqref{eq.5.10.3} follows from \eqref{eq.5.10.5}, \eqref{eq.5.10.7}, and \eqref{eq.5.10.8}. 
		
		Lemma \ref{lem.SQ} is proved. 
		
	\end{proof}

	\newcommand{\ssq}{{S^\sharp(Q)}}
	\newcommand{\qqs}{{\Q^\sharp}}
	\newcommand{\mqs}{{\M^\sharp}}
	
	Let $ Q \in \Ls $ with $ \Ls $ as in \eqref{eq.Ls-def}. Let $ \A(x), x \in E $ be as in Lemma \ref{lem.FK-palp}. Let $ S(\A(x)) $ be as in \eqref{eq.S(A)-def}. Let $ \erep(Q) $ be as in Lemma \ref{lem.FK-CZ}(C). Let $ \xqs $ be as in Lemma \ref{lem.rep}. We set
	\begin{equation}
		\ssq := S(\A(\erep(Q))) \cup \set{\erep(Q)} \cup \{\xqs\}. 
		\label{eq.ssq-def}
	\end{equation}
	Note that $ \xqs $ is not a point in $ E $.

	\subsection{Transition jets}
	\label{section:transition jet}

	In this section, we want construct a map $ T_Q:\ctp(E)\times \pos \to \P $ of bounded depth, such that $ T_Q(f,M) \in \Gk(\xqs,16,CM,f) $ for all $ (f,M) \in \ctp(E)\times \pos $ with $ \norm{f}_{\ctp(E)} \leq M $. We will explain the importance of $ \Gk(\xqs,16,CM,f) $ in Remark \ref{rem.16} towards the end of the section.

	Let $ S \subset E $. As in \eqref{eq.Q-def} and \eqref{eq.M-def}, we consider the following functions, depending on the choice of $ S $:
	\begin{equation}
		\begin{split}
			\qqs: \wt(S) &\to \pos\\
			\vec{P} = (P^x)_{x \in S} &\mapsto \sum_{x \in S}\abs{\d^\alpha P^x(x)} + \sum_{\substack{x, y \in \ssq\\x \neq y\\\abs{\alpha}\leq 1}} { \frac{\abs{\d^\alpha (P^x - P^y)(x)}}{\abs{x - y}^{2-\abs{\alpha}}} }
		\end{split}\,,
		\label{eq.QQs-def}
	\end{equation}
	and
	\begin{equation}
		\begin{split}
			\mqs: \wtp(S) &\to [0,\infty]\\
			(P^x)_{x \in S} &\mapsto \begin{cases}
				\sum\limits_{x \in S} \frac{\abs{\grad P^x}^2}{P^x(x)} &\text{ if $ P^x(x) \geq 0 $ for each $ x \in S $}\\
				\infty &\text{ if there exists $ x \in \ssq $ such that $ P^x(x) < 0 $}
			\end{cases}\,.
		\end{split}
		\label{eq.MQs-def}
	\end{equation}
	We adopt the conventions that $ \frac{0}{0}= 0 $ and $ \frac{a}{0} = \infty $ for $ a > 0 $.

	For the rest of the section, we fix $ Q \in \Ls $, with $ \Ls $ as in \eqref{eq.Ls-def}. Let $ \xqs $ be as in Lemma \ref{lem.rep}. Let $ \ssq $ be as in \eqref{eq.ssq-def}. Recall from \eqref{eq.ssq-def} that $ \erep(Q) \in \ssq $, with $ \erep $ as in Lemma \ref{lem.FK-CZ}(C).

	\renewcommand{\Af}{\mathbb{A}_f}
	Let $ f \in \ctp(E) $ be given. We define
	\begin{equation}
		\begin{split}
			\Af^0 &:= \set{\vp \in \wtp(\ssq): \begin{matrix*}[l]
					(\vp,\xqs) \equiv 0 \text{ and }\\
					(\vp,x)(x) = f(x) \for x \in \ssq\cap E
			\end{matrix*}}\text{, and }\\
			\Af^1 &:= \set{\vp \in \wtp(\ssq\cap E): \begin{matrix}
					(\vp,x)(x) = f(x) \for x \in \ssq\cap E
			\end{matrix}}\,.
		\end{split}
		\label{eq.Af-def}
	\end{equation}
	We note that that $ \Af^0 $ and $ \Af^1 $ are affine subspaces of $ \wt(\ssq) $ and $ \wt(\ssq\cap E) $, respectively. They depend only on $ f|_{\ssq\cap E} $. 
	
	Consider the following minimization problems.
	\begin{enumerate}[(M1)]
		\setcounter{enumi}{-1} 
		
		\item\label{M0} Let $ S = \ssq $ in \eqref{eq.QQs-def} and \eqref{eq.MQs-def}. Minimize $ \qqs+\mqs $ over $ \Af^0 $. 
		
		\item\label{M1} Let $ S = \ssq \cap E $ in \eqref{eq.QQs-def} and \eqref{eq.MQs-def}. Minimize $ \qqs+\mqs $ over $ \Af^1 $,

	\end{enumerate}

	For $\star = 0,1 $, we say a Whitney field $ \vp \in \Af^\star $ is an \ul{approximate minimizer} of (M$ \star $) if
	\begin{itemize}
		\item $ (\qqs+\mqs)(\vp) \leq C\cdot \inf\set{(\qqs+\mqs)(\vp') : \vp' \in \Af^\star} $ for some universal constant $ C $.
	\end{itemize}
	
	\begin{remark}\label{rem.approx-min}
		Recall from Section \ref{sect:lasso} that both (M0) and (M1) can be reformulated as convex quadratic programming problems with affine constraint, and are efficiently solvable\cite{BV-CO}. Thus, we can solve for an approximate minimizer of (M$ \star $), $ \star = 0,1 $, using at most $ C $ operations, since $ \#(\ssq) $ is universally bounded. We call the approximate minimizers for (M0) and (M1) obtained this way $ \vp_0^\sharp $ and $ \vp_1^\sharp $. Note that $ \vp_0^\sharp $ and $ \vp_1^\sharp $, respectively, are uniquely determined by $ \Af^0 $ and $ \Af^1 $.  
	\end{remark}

	\begin{lemma}\label{lem.M1}
		Let $ Q \in \Ls $. Let $ \xqs $ be as in Lemma \ref{lem.rep}. Let $ (f,M) \in \ctp(E)\times\pos $ with $ \norm{f}_{\ctp(E)} \leq M $. Let $ \vec{P} = (P^x)_{x \in \ssq \cap E} $ be an approximate minimizer of (\hyperref[M1]{M1}) above. Let $ P^{\erep(Q)} $ be the polynomial associated with the point $ \erep(Q) $, i.e., $ P^{\erep(Q)} = (\vp,\erep(Q)) $, with $ \erep $ as in Lemma \ref{lem.FK-CZ}(C). Let $ T_w^{\erep(Q)} $ be the Whitney extension operator associated with the singleton $ \set{\erep(Q)} $ as in Lemma \ref{lem.WT}(B). 
		Then 
		\begin{equation*}
			\jet_{\xqs} \circ T_w^{\erep(Q)} (P^{\erep(Q)}) \in \G(\xqs,\ssq\cap E, CM, f).
		\end{equation*}
		
	\end{lemma}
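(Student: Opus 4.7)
The plan is to produce the required $F \in \ctp(\rt)$ by patching two functions with a cutoff localized near $\xqs$. One ingredient, call it $G := T_w^{\erep(Q)}(P^{\erep(Q)})$, has by construction the jet $P^{\erep(Q)}$ at $\erep(Q)$ (and thus a definite jet at $\xqs$) but generally does not match $f$ at the remaining points of $\ssq \cap E$. The other ingredient, $\tilde G := T_w^{\ssq\cap E}(\vec{P})$ obtained from Lemma~\ref{lem.WT}(B) applied to the full Whitney field $\vec{P}$, satisfies $\tilde G(x) = f(x)$ for every $x \in \ssq \cap E$ and also has jet $P^{\erep(Q)}$ at $\erep(Q)$. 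The output will be $F := \chi G + (1-\chi)\tilde G$ where $\chi$ is a smooth cutoff with $\chi \equiv 1$ on $B(\xqs, c_0\dq/4)$ and $\mathrm{supp}\,\chi \subset B(\xqs, c_0\dq/2)$, so that $\chi \equiv 0$ on $\ssq \cap E$ (using Lemma~\ref{lem.rep}).

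First I would verify $\norm{\vec{P}}_{\wtp(\ssq\cap E)} \leq CM$. Pick a nonnegative $\ct(\rt)$ extension $F_0$ of $f$ with $\norm{F_0}_{\ct(\rt)} \leq 2M$; Lemma~\ref{lem.WT}(A) then provides a competitor $\vec{P}_0 := (\jet_x F_0)_{x \in \ssq\cap E} \in \Af^1$ with $\norm{\vec{P}_0}_{\wtp(\ssq\cap E)} \leq CM$. Because $\#(\ssq\cap E)$ is universally bounded, Lemma~\ref{lem.Q+M} gives $(\qqs+\mqs)(\vec{P}_0) \leq CM$, and since $\vec{P}$ is an approximate minimizer of (M1), $(\qqs+\mqs)(\vec{P}) \leq CM$, so Lemma~\ref{lem.Q+M} applied in reverse yields $\norm{\vec{P}}_{\wtp(\ssq\cap E)} \leq CM$. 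From this, the singleton Whitney field $(P^{\erep(Q)})$ lies in $\wtp(\{\erep(Q)\})$ with norm $\leq CM$, so Lemma~\ref{lem.WT}(B) gives $\norm{G}_{\ct(\rt)} \leq CM$ and $G \geq 0$; similarly $\norm{\tilde G}_{\ct(\rt)} \leq CM$ and $\tilde G \geq 0$.

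The main obstacle is the $\ct$ bound on $F$, because $|\d^j\chi| \leq C\dq^{-j}$ blows up as $\dq \to 0$. The saving grace is that both $G$ and $\tilde G$ carry the same two-jet $P^{\erep(Q)}$ at $\erep(Q)$. Writing $F = \tilde G + \chi(G - \tilde G)$ and using Taylor's theorem for $G - \tilde G$ based at $\erep(Q)$, together with $|y - \erep(Q)| \leq C\dq$ on $\mathrm{supp}\,\chi$ (since $\xqs \in Q$ and $\erep(Q) \in 5Q$), one obtains
\begin{equation*}
  |(G-\tilde G)(y)| \leq CM\dq^2, \qquad |\grad(G-\tilde G)(y)| \leq CM\dq
  \enskip\text{on}\enskip\mathrm{supp}\,\chi.
\end{equation*}
Each cross term $|\d^k\chi|\,|\d^{2-k}(G-\tilde G)|$ then contributes at most $CM$, establishing $\norm{F}_{\ct(\rt)} \leq CM$.

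Finally I would check the three defining properties of $\G(\xqs, \ssq\cap E, CM, f)$: nonnegativity of $F$ is immediate from $0 \leq \chi \leq 1$ and $G,\tilde G \geq 0$; $F|_{\ssq\cap E} = \tilde G|_{\ssq\cap E} = f$ because $\chi$ vanishes on $\ssq\cap E$; and $\jet_\xqs F = \jet_\xqs G$ because $\chi \equiv 1$ in an open neighborhood of $\xqs$, so $F \equiv G$ there. This yields $\jet_\xqs G \in \G(\xqs, \ssq\cap E, CM, f)$, as required.
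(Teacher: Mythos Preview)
Your proof is correct, but it takes a genuinely different route from the paper's. The paper stays in the discrete world of Whitney fields: it adjoins $P_1 := \jet_{\xqs}\circ T_w^{\erep(Q)}(P^{\erep(Q)})$ to $\vec{P}$ to form an augmented field $\vec{P}_1 \in W^2(\ssq)$, verifies directly that $\norm{\vec{P}_1}_{\wtp(\ssq)} \leq CM$ via the triangle inequality and the separation $\dist{\xqs}{E} \geq c_0\dq$, and then invokes Lemma~\ref{lem.WT}(B) once to produce the witnessing $F$. You instead pass to functions immediately, apply Lemma~\ref{lem.WT}(B) twice (to the singleton and to the full field), and patch the results with a cutoff $\chi$ localized near $\xqs$; the Taylor estimate on $G - \tilde G$ based at $\erep(Q)$ is exactly what absorbs the $\dq^{-j}$ growth of $\d^j\chi$. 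Both arguments hinge on the same two ingredients, namely $\norm{\vec{P}}_{\wtp(\ssq\cap E)} \leq CM$ and the distance lower bound from Lemma~\ref{lem.rep}, so neither is deeper than the other; the paper's version is a bit more economical (one extension instead of two plus a cutoff), while yours is more explicitly constructive and closer in spirit to the global patching in Section~\ref{sect:pou}.

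One small terminology slip: you write that $G$ and $\tilde G$ share the same \emph{two}-jet $P^{\erep(Q)}$ at $\erep(Q)$, but $P^{\erep(Q)} \in \P$ is a one-jet. This is harmless, since the Taylor bounds you actually use ($|G-\tilde G| \leq CM\dq^2$ and $|\grad(G-\tilde G)| \leq CM\dq$ on $\supp{\chi}$) follow already from matching one-jets together with the $C^2$ bound $\norm{G-\tilde G}_{\ct(\rt)} \leq CM$.
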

	
	\begin{proof}
		
		Let $ \vec{P} $ be as in the hypothesis. Let $ P_1 := \jet_{\xqs} \circ T_w^{\erep(Q)} (P^{\erep(Q)}) $. We adjoin $ P_1 $ to $ \vp $ to form 
		\begin{equation*}
			\vp_1 := \brac{P_1, (P^x)_{x \in \ssq \cap E}} \in \wt(\ssq) .
		\end{equation*}
		Thanks to Lemma \ref{lem.WT}, it suffices to show that $ \vp_1 \in \wtp(\ssq) $ and $ \norm{\vp_1}_{\wtp(\ssq)} \leq CM $. 
		
		By Lemma \ref{lem.WT}(B), we see that $ T_w^{\erep(Q)} (P^{\erep(Q)}) \in \ctp(\rt) $ with norm $ \norm{T_w^{\erep(Q)} (P^{\erep(Q)}) }_{\ct(\rt)} \leq CM $. Therefore, 
		\begin{equation}
			\abs{\d^\alpha P_1(\xqs)} \leq CM\text{ for }\abs{\alpha} \leq 1,
			\text{ and }
			\abs{\grad P_1} \leq \sqrt{CMP_1(\xqs)}.
			\label{eq.5.30}
		\end{equation}
		Thus, $ \vp_1 \in \wtp(\ssq) $.

		Since $ \vp $ is an approximate minimizer of (\hyperref[M1]{M1}) and $ \norm{f}_{\ctp(E)} \leq M $, we have
		\begin{equation}
			\norm{\vp}_{\wtp(\ssq \cap E)}\leq CM.
			\label{eq.5.31}
		\end{equation}
		For $ x \in \ssq\cap E $, we have
		\begin{equation*}
			\begin{split}
				\abs{\d^\alpha(P^x - P_1)(x)} 
				&\leq \abs{\d^\alpha (P^x - P^{\erep(Q)})(x)} + \abs{\d^\alpha(P^{\erep(Q)} - \jet_{\xqs} \circ T_w^{\erep(Q)} (P^{\erep(Q)}))(x)}.
			\end{split}
			\label{eq.5.32}
		\end{equation*}
		Using \eqref{eq.5.31} to estimate the first term and Taylor's theorem to estimate the second, we have
		\begin{equation}\label{eq.5.33}
			\abs{\d^\alpha(P^x - P_1)(x)} \leq CM\brac{
				\abs{x-\erep(Q)} +
				\abs{\xqs - \erep(Q)}
			}^{2-\abs{\alpha}}
			\leq CM\abs{x-\xqs}^{2-\abs{\alpha}}.
		\end{equation}
		For the last inequality, we use the fact that $ \dist{\xqs}{E} \geq c\dq $, thanks to Lemma \ref{lem.rep}. 
		
		Applying Taylor's theorem to \eqref{eq.5.33}, we have
		\begin{equation}\label{eq.5.34}
			\abs{\d^\alpha(P^x - P_1)(\xqs)} \leq CM\abs{x-\xqs}^{2-\abs{\alpha}}.  
		\end{equation}
		
		Combining \eqref{eq.5.30}--\eqref{eq.5.34}, we see that $ \norm{\vp_1}_{\wtp(\ssq)} \leq CM $. Lemma \ref{lem.M1} is proved.

	\end{proof}

	\begin{definition}\label{def.TQ}
		Let $ Q \in \Ls $. Let $ \xqs $ be as in Lemma \ref{lem.rep}. We define
		\begin{equation*}
			T_Q: \ctp(E)\times\pos \to \P
		\end{equation*}
		by the following rule. Let $ (f,M) \in \ctp(E) \times \pos $ be given, and let (M0) and (M1) be as above. Let $ \vp_0^\sharp $ and $ \vp_1^\sharp $ be as in Remark \ref{rem.approx-min}. 
		\begin{enumerate}[(TQ-1)]
			\setcounter{enumi}{-1} 
			\item Suppose $ \vp_0^\sharp $ satisfies $ (\qqs+\mqs)(\vp_0^\sharp) \leq C_TM $, for some large universal constant $ C_T $. Then we set $ T_Q(f,M) \equiv 0 $.
			\item Otherwise, we set $ T_Q(f,M) := \jet_{\xqs}\circ T_w^{\erep(Q)}\brac{P_1} $. Here, $ P_1 $ is the polynomial in $ \vp_1^\sharp $ associated with the point $ \erep(Q) $, i.e., $ P_1:= (\vp_1^\sharp,\erep(Q)) $; and $ T_w^{\erep(Q)} $ is the Whitney extension operator associated with the singleton $ \set{\erep(Q)} $ as in Lemma \ref{lem.WT}(B). 
		\end{enumerate}
	\end{definition}
	
	It is clear that $ T_Q $ has bounded depth, since $ \vp^\sharp_0 $ and $  \vp^\sharp_1 $ depend only on $ f|_{\ssq\cap E} $. 
	
	\begin{remark}\label{rem.TQ}
		Given $ Q \in \Ls $ with $ \Ls $ as in \eqref{eq.Ls-def}, $ \xqs $ as in Lemma \ref{lem.rep}, $ \ssq $ as in \eqref{eq.ssq-def}, and $ (f,M) \in \ctp(E) \times \pos $ with $ \norm{f}_{\ctp(E)} \leq M $, computing $ T_Q(f,M) $ from the data above amounts to solving for approximate minimizers of (M0) and (M1). Thus, by Remark \ref{rem.approx-min}, we can compute $ T_Q(f,M) $ from the data above using at most $ C $ operations.
	\end{remark}
	
	Recall the following perturbation lemma from \cite{JL20}.
	
	\begin{lemma}[variant of Lemmas 5.7 and 7.3 of \cite{JL20}]\label{lem.perturb}
		Let $ E \subset \rt $ be finite. Let $ Q \in \Ls $. Let $ \xqs $ be as in Lemma \ref{lem.rep}. Let $ f \in \ctp(E) $ be given. Suppose $ \Gk(\xqs,16,M,f) \neq \void $. The following are true. 
		\begin{enumerate}[(A)]
			\item There exists a number $ B_0 > 0 $ exceeding a large universal constant such that the following holds. Suppose $ f(x) \geq B_0M\dq^2 $ for each $ x \in E \cap 5Q $. Then 
			\begin{equation*}
				\eqindent
				\Gk(\xqs,16,M,f) + M\cdot \sk(\xqs,16)\subset \Gk(\xqs,16,CM,f),
			\end{equation*}
			for some universal constant $ C $.
			\item Let $ A > 0 $. Suppose $ f(x) \leq AM\dq^2 $ for some $ x \in E \cap 5Q $. Then
			\begin{equation*}
				0 \in \Gk(\xqs,16,A'M,f).
			\end{equation*}
			Here, $ A' $ depends only on $ A $. 
		\end{enumerate}
	\end{lemma}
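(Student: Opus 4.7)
The plan is to adapt Lemmas 5.7 and 7.3 of \cite{JL20}. Both parts leverage the fact (Lemma \ref{lem.rep}) that $\xqs$ sits at distance $\gtrsim \dq$ from $E$, together with the graph structure of $E\cap 5Q$ guaranteed by Lemma \ref{lem.CZ}(B).

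For part (A), fix $S\subset E$ with $\#(S)\leq 16$. The hypothesis $P\in \Gk(\xqs,16,M,f)$ yields $F\in\ctp(\rt)$ with $F|_S=f$, $\norm{F}_{\ct(\rt)}\leq M$, and $\jet_\xqs F=P$. By Lemma \ref{lem.fp-sigma}, $P_0\in\sk(\xqs,16)$ yields $\phi\in\ct(\rt)$ with $\phi|_E=0$, $\norm{\phi}_{\ct(\rt)}\leq C$, and $\jet_\xqs \phi=P_0$. I will take $G:=F+M\phi\cdot\theta$, where $\theta$ is a fixed $\ct$ cutoff equal to $1$ on a unit ball about $\xqs$ and supported in a slightly larger ball. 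Then $G|_S=f$, $\jet_\xqs G=P+MP_0$, and $\norm{G}_{\ct(\rt)}\leq CM$; outside $\mathrm{supp}(\theta)$ positivity is immediate. Inside $\mathrm{supp}(\theta)$, I verify $F+M\phi\theta\geq 0$ by splitting into two regions. Near $E\cap 5Q$, the hypothesis $f\geq B_0 M\dq^2$ combined with the pointwise bound $\abs{\nabla F}^2\leq CMF$ (forced by $F\geq 0$ and $\norm{F}_{\ct(\rt)}\leq M$, cf.\ Lemma \ref{lem.Q+M}) yields $F\gtrsim B_0 M\dq^2$ on a $c\dq$-neighborhood of $E\cap 5Q$; since $\phi|_E=0$ and $\norm{\phi}_{\ct(\rt)}\leq C$ give $\abs{M\phi}\lesssim M\dq$ on the same neighborhood, taking $B_0$ large absorbs the perturbation. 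Away from $E\cap 5Q$ but still inside $\mathrm{supp}(\theta)$, the one-dimensional graph structure of $E\cap 5Q$ lets me bound $\abs{\phi}$ in terms of the distance to the nearest point of $E$, again yielding the desired inequality.

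For part (B), fix $S\subset E$ with $\#(S)\leq 16$ and let $F\in\ctp(\rt)$ realize $f$ on $S$ with $\norm{F}_{\ct(\rt)}\leq CM$ (available since $\Gk(\xqs,16,M,f)\neq\void$ by the hypothesis of the lemma). Put $G:=F\cdot(1-\chi)$ where $\chi$ is a $\ct$ cutoff equal to $1$ on $B(\xqs,c\dq)$ and vanishing off $B(\xqs,2c\dq)$, with $c$ small enough (using $\dist{\xqs}{E}\geq c_0\dq$) that $\mathrm{supp}(\chi)\cap E=\void$. Then $G\geq 0$, $G|_S=f$, and $\jet_\xqs G=0$. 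For the norm bound, the hypothesis $f(x_0)\leq AM\dq^2$ together with $\abs{\nabla F}^2\leq CMF$ and Taylor expansion around $x_0$ forces $F\lesssim AM\dq^2$ and $\abs{\nabla F}\lesssim \sqrt{A}M\dq$ on $\mathrm{supp}(\chi)$; these compensate for the $\dq^{-1}$ and $\dq^{-2}$ losses from differentiating $\chi$ via the product rule, giving $\norm{G}_{\ct(\rt)}\leq A'M$ with $A'$ depending only on $A$.

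The main obstacle is the fine positivity verification in part (A) at points of $\mathrm{supp}(\theta)$ that lie neither close to $E\cap 5Q$ (where $F$ is large) nor on $E\setminus 5Q$ (where $\phi$ vanishes by construction). In this intermediate region neither $F$ nor $\phi$ has a direct sign, and the graph structure of $E\cap 5Q$ must be used to bound $\phi$ tightly enough — essentially linearly in the transverse distance to the graph — so that the $B_0 M \dq^2$ lower bound on $F$ can absorb the perturbation after suitable propagation.
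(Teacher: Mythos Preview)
The paper does not supply a proof of this lemma; it is quoted as a variant of Lemmas~5.7 and~7.3 of \cite{JL20}, so your sketch must be judged on its own.

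Part~(B) is essentially correct. The only slip is that for a fixed $S$ with $\#S\le 16$ you need $F(x_0)=f(x_0)$ to Taylor-expand from $x_0$, but your witness $F$ only realizes $f$ on $S$, and $x_0$ need not lie in $S$. The fix is painless: pick any $P\in\Gk(\xqs,16,M,f)$ and use the witness for the singleton $\{x_0\}$ to deduce $0\le P(\xqs)\le C(A)M\dq^2$ and $|\nabla P|\le C(A)M\dq$; since every $F_S$ shares the jet $P$ at $\xqs$, these bounds propagate to $F_S$ on $\supp{\chi}$, and your product-rule estimate goes through.

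Part~(A), however, has a genuine scale mismatch that cannot be repaired within your framework. On the $c\dq$-neighborhood of $E\cap 5Q$ you (correctly) obtain $F\gtrsim B_0 M\dq^2$ and $|M\phi|\lesssim M\dq$, the latter from $\phi|_E=0$ and $\norm{\phi}_{\ct(\rt)}\le C$, which give only \emph{linear} decay $|\phi(y)|\le C\dist{y}{E}$. For $F+M\phi\theta\ge 0$ you would then need $B_0 M\dq^2\gtrsim M\dq$, i.e.\ $B_0\gtrsim \dq^{-1}$, which is not a universal constant since $\dq$ can be arbitrarily small. The graph structure does not rescue this: it constrains the \emph{tangential} derivative of $\phi$ along $E\cap 5Q$ (via divided differences between nearby data points), but the transverse derivative of $\phi$ can be of unit size, so $|\phi|$ genuinely grows linearly off the graph. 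The same obstruction kills the ``intermediate region'' argument you flag as the main obstacle --- you need $F$ to dominate something of order $M\dq$, and the hypothesis only supplies $M\dq^2$. The argument in \cite{JL20} does not attempt a pointwise positivity check of $F+M\phi$; a different mechanism is required.
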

	
	The main lemma of this section is the following.
	
	\begin{lemma}\label{lem.TQ}
		Let $ Q \in \Ls $ with $ \Ls $ as in \eqref{eq.Ls-def}. Let $ \xqs $ be as in Lemma \ref{lem.rep}. Let $ T_Q $ be as in Definition \ref{def.TQ}. Let $ (f,M) \in \ctp(E) \times \pos $ with $ \norm{f}_{\ctp(E)} \leq M $. Then
		\begin{equation*}
			T_Q(f,M) \in \Gk(\xqs,16,CM,f).
		\end{equation*}
	\end{lemma}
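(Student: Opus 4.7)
The plan is to split into two cases according to the branch defining $T_Q(f,M)$ in Definition \ref{def.TQ}. In \textbf{Case (TQ-0)}, $(\qqs+\mqs)(\vp_0^\sharp) \leq C_T M$ and $T_Q(f,M) \equiv 0$. By Lemma \ref{lem.Q+M} adapted to $\ssq$, $\norm{\vp_0^\sharp}_{\wtp(\ssq)} \leq CM$. Since $(\vp_0^\sharp, \xqs) \equiv 0$ and $|\xqs - \erep(Q)| \leq C\dq$ (as $\xqs \in Q$ by Lemma \ref{lem.rep} and $\erep(Q) \in 5Q$ for $Q \in \Ls$), the Whitney compatibility between $\xqs$ and $\erep(Q)$ forces
\[
f(\erep(Q)) = (\vp_0^\sharp,\erep(Q))(\erep(Q)) \leq CM\dq^2.
\]
Since $\erep(Q) \in E \cap 5Q$, Lemma \ref{lem.perturb}(B) with $A = C$ gives $0 \in \Gk(\xqs,16,C'M,f)$, finishing this case.

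In \textbf{Case (TQ-1)}, $(\qqs+\mqs)(\vp_0^\sharp) > C_T M$. I first establish the sub-claim: for a suitable universal constant $B$, $f(x) \geq B M \dq^2$ for every $x \in E \cap 5Q$. Arguing by contradiction, if $f(x_0) < B M \dq^2$ for some $x_0 \in E \cap 5Q$, Lemma \ref{lem.perturb}(B) yields $0 \in \Gk(\xqs,16,A'M,f)$. Selecting a witness $F \in \ctp(\rt)$ with $\norm{F}_{\ct(\rt)} \leq A'M$, $F|_{\ssq\cap E} = f|_{\ssq\cap E}$, and $\jet_\xqs F \equiv 0$ (using that $\#(\ssq \cap E)$ is universally bounded by the depth of $\A$, so it serves as a subset in the $\Gk$ intersection), the Whitney field $(\jet_x F)_{x \in \ssq}$ lies in $\Af^0$ with $\wtp(\ssq)$-norm at most $CA'M$. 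Lemma \ref{lem.Q+M} then gives $(\qqs+\mqs)(\vp_0^\sharp) \leq CA'M$, contradicting (TQ-1) once $C_T$ is chosen large enough.

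With the sub-claim in hand, Lemma \ref{lem.M1} furnishes $F_1 \in \ctp(\rt)$ with $\norm{F_1}_{\ct(\rt)} \leq CM$, $F_1|_{\ssq \cap E} = f|_{\ssq\cap E}$, and $\jet_\xqs F_1 = T_Q(f,M)$. Pick $F_0 \in \ctp(\rt)$ realizing $\norm{f}_{\ctp(E)} \leq M$. Then $F_1 - F_0$ vanishes on $\ssq \cap E$ with $\ct(\rt)$-norm at most $CM$, so by the definition of $\sigma$ combined with Lemma \ref{lem.SQ} applied at $\erep(Q) \in E \cap 5Q$ (using $\ssq \cap E \supset S(\A(\erep(Q)))$),
\[
\jet_\xqs(F_1 - F_0) \in CM \cdot \sigma(\xqs, S(\A(\erep(Q)))) \subset C'M \cdot \sk(\xqs,16).
\]
Setting $M_1 := C'M$ and calibrating the sub-claim so that $B = B_0 C'$, where $B_0$ is from Lemma \ref{lem.perturb}(A), one gets $f(x) \geq B_0 M_1 \dq^2$ on $E \cap 5Q$, so Lemma \ref{lem.perturb}(A) at scale $M_1$ yields
\[
\Gk(\xqs,16,M_1,f) + M_1 \cdot \sk(\xqs,16) \subset \Gk(\xqs,16,C''M_1,f).
\]
Since $\jet_\xqs F_0 \in \Gk(\xqs,16,M,f) \subset \Gk(\xqs,16,M_1,f)$ and $\jet_\xqs(F_1-F_0) \in M_1 \cdot \sk(\xqs,16)$, summing gives $T_Q(f,M) \in \Gk(\xqs,16,C''M_1,f)$, which is the desired conclusion.

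The main obstacle is the coupled choice of the constants $B$ and $C_T$: the sub-claim constant $B$ must be large enough so that Lemma \ref{lem.perturb}(A) applies at the inflated scale $M_1 = C'M$ rather than at the original $M$, and in turn $C_T$ must exceed the constant produced by the witness argument inside the sub-claim, so that (TQ-1) genuinely forces the hypotheses of Lemma \ref{lem.perturb}(A) to hold at that inflated scale.
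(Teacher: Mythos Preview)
Your proof follows the same two-case strategy as the paper and invokes the same key ingredients (Lemmas \ref{lem.Q+M}, \ref{lem.SQ}, \ref{lem.M1}, and \ref{lem.perturb}), so the approach is essentially identical.

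One technical point deserves attention. In your sub-claim for case (TQ-1), you pass from $0\in\Gk(\xqs,16,A'M,f)$ to a witness $F$ with $F|_{\ssq\cap E}=f$ by asserting that $\ssq\cap E$ ``serves as a subset in the $\Gk$ intersection.'' This requires $\#(\ssq\cap E)\le 16$, but from \eqref{eq.ssq-def} and Lemma \ref{lem.FK-palp}(A) one only knows $\#(\ssq\cap E)\le 3D_0+1$ with $D_0$ an unspecified universal constant, so the inclusion is not justified as stated. The paper sidesteps this entirely: rather than invoking Lemma \ref{lem.perturb}(B) for the sub-claim, it takes a global witness $F\in\ctp(\rt)$ for $\norm{f}_{\ctp(E)}\le M$, observes via Taylor's theorem and nonnegativity that $f(x_0)<B_0M\dq^2$ forces $\abs{F(\xqs)}\le CM\dq^2$ and $\abs{\nabla F(\xqs)}\le CM\dq$, and then checks directly (using $\dist{\xqs}{E}\ge c_0\dq$) that the Whitney field $\bigl(0,(\jet_yF)_{y\in\ssq\cap E}\bigr)$ lies in $\Af^0$ with $(\qqs+\mqs)$-value at most $CM$, contradicting (TQ-1) for $C_T$ large. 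This direct route avoids any cardinality hypothesis on $\ssq\cap E$. Your argument is easily repaired along these lines, or alternatively by noting that the proof of Lemma \ref{lem.perturb}(B) in \cite{JL20} works with any fixed universal constant in place of $16$.
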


	\begin{proof}
		Since $ \norm{f}_{\ctp(E)} \leq M $, we have $ \Gk(\xqs,16,CM,f) \neq \void $. Therefore, the hypotheses of Lemma \ref{lem.perturb} are satisfied. 
		
		Recall Definition \ref{def.TQ}. 
		\newcommand{\GM}{\G(\xqs,\ssq\cap E,CM,f)}
		
		Suppose $ T_Q(f,M) $ is defined in terms of (TQ-0). 
		
		By Lemma \ref{lem.WT}, there exists $ F \in \ctp(\rt) $ with $ \norm{F}_{\ct(\rt)} \leq CM $, $ F|_{\ssq \cap E} = f $, and $ \jet_{\xqs} F \equiv 0 $. Recall from Lemma \ref{lem.FK-CZ}(C) and \eqref{eq.ssq-def} that $ \erep(Q) \in \ssq \cap 5Q $. Therefore, by Taylor's theorem, we have
		\begin{equation*}
			f(\erep(Q)) = F(\erep(Q)) \leq CM\dq^2. 
		\end{equation*}
		By Lemma \ref{lem.perturb}(B), we have $ T_Q(f,M) \equiv 0 \in \Gk(\xqs,16,CM,f) $. 
		
		Suppose $ T_Q(f,M) $ is defined in terms of (TQ-1). 
		
		For sufficiently large $ C_T $, Taylor's theorem implies, with $ B_0 $ as in Lemma \ref{lem.perturb},
		\begin{equation*}
			f(x) \geq B_0 M\dq^2
			\for x \in E \cap 5Q.
		\end{equation*}
		Thus, the hypothesis of Lemma \ref{lem.perturb}(A) is satisfied.  
		
		Since $ \norm{f}_{\ctp(E)} \leq M $, there exists 
		\begin{equation*}
			\hat{F} \in \ctp(\rt) \text{ with }
			\norm{\hat{F}}_{\ct(\rt)} \leq CM,\,
			\hat{F}|_E = f, \text{ and }
			\jet_{\xqs} \hat{F} \in \G(\xqs,E,CM,f).
		\end{equation*}

		By Lemma \ref{lem.M1}, we have
		\begin{equation*}
			T_Q(f,M) \in \GM.
		\end{equation*}
		
		Therefore, by Lemma \ref{lem.SQ}, the definition of $ \ssq $ in \eqref{eq.ssq-def}, and the definition of $ \sigma $ in \eqref{eq.sigma-def}, we have
		\begin{equation*}
			\jet_{\xqs}\hat{F} - T_Q(f,M) \in CM\cdot \sigma(\xqs,\ssq\cap E) \subset C' M\cdot \sk(\xqs,16).
		\end{equation*}
		
		Thus, by Lemma \ref{lem.perturb}(A) and the trivial inclusion $ \G(\xqs,E,M,f) \subset \Gk(\xqs,16,M,f) $, we have
		\begin{equation*}
			\begin{split}
				T_Q(f,M) &\in \jet_{\xqs}\hat{F} + CM\cdot\sk(\xqs,16) \\
				&\subset \Gk(\xqs,16,CM,f) + CM\sk(\xqs,16) \\
				&\subset \Gk(\xqs,16,C' M,f).
			\end{split}
		\end{equation*}

		Lemma \ref{lem.TQ} is proved.

	\end{proof}

	\begin{remark}\label{rem.16}
		We will not use Lemma \ref{lem.TQ} explicitly in this paper. However, jets in $ \Gk(\xqs,16,M,f) $ are crucial for the following reason: 
		\LAQ{eq.trans-jet}{(Lemma 5.3 of \cite{JL20}) Suppose $ Q, Q' \in \Lz $, $ \xqs $ and $ x_{Q'}^\sharp $ as in Lemma \ref{lem.rep}, $ P \in \Gk(\xqs,16,M,f) $ and $ P' \in \Gk(x_{Q'}^\sharp,16,M,f) $, then
			\begin{equation*}
				\abs{\d^\alpha(P - P')(\xqs)}, \abs{\d^\alpha(P - P')(x_{Q'}^\sharp)} \leq CM\brac{\dq + \delta_{Q'} + \abs{\xqs - x_{Q'}^\sharp}}^{2-\abs{\alpha}}
				\for \abs{\alpha} \leq 1.
		\end{equation*}}
		We can then use \eqref{eq.trans-jet} to control the derivatives when we patch together local extensions. See the proof of Theorem 1 in \cite{JL20-Ext}.
	\end{remark}

	\subsection{One-dimensional algorithms}

	\newcommand{\bxi}{\overline{\Xi}}
	\newcommand{\bP}{\overline{\P}}
	\newcommand{\bjet}{\overline{\jet}}
	
	We write $ \bP, \bP^+ $, respectively, to denote the collections of single-variable polynomials of degree no greater than one, two. We write $ \bjet_t, \bjet_t^+ $, respectively, to denote the one-jet, two-jet, of a single variable function at $ t \in \R $.

	We recall the following results proven in \cite{JL20}.
	
	\newcommand{\Eb}{\overline{\E}}
	\newcommand{\Ebpm}{\overline{\E}_{\pm}}
	\newcommand{\bez}{\overline{E}_0}

	\begin{subtheorem}{theorem}
		
		\renewcommand{\ddtm}{\d^m}
		
		\begin{theorem}\label{thm.bd-1d}
			Let $ \bez \subset \R $ be a finite set with $ \#(\bez) = N_0 $. We think of $ \ctp(\bez) \approx \pos^{N_0} $. Then there exists a collection of maps $ \set{\bxi_{+}^t: t \in \R} $, where $ \bxi_{+}^t:\ctp(E) \to \bP^+ $ for each $ t \in \R $, such that the following hold.
			\begin{enumerate}[(A)]
				\item There exists a universal constant $ D_0 $ such that for each $ t \in \R $, the map $ \bxi_+^t: \ctp(\bez) \to \bP^+ $ is of depth $ D_0 $. 
				\item Let $ f \in \ctp(\bez) $ be given. Then there exists a function $ F \in \ctp(\R) $ such that 
				\begin{equation*}
					\bjet_t^+F = \bxi_+^t(f)
					\text{ for all } t \in \R,
					\enskip
					\norm{F}_{\ct(\R)} \leq C\norm{f}_{\ctp(\bez)}, \text{ and }
					F(t) = f(t)
					\for t \in E.
				\end{equation*}
				\item There is an algorithm, that takes the given data, performs one-time work, and then responds to queries. 
				
				A query consists of a point $ t \in \R $, and the response to the query is the depth-$ D_0 $ map $ \bxi_+^t $, given in its efficient representation.
				
				The one-time work takes $ CN\log N $ operations and $ CN $ storage. The time to answer a query is $ C\log N $.
			\end{enumerate}

		\end{theorem}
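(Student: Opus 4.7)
The plan is to reduce to a one-dimensional analogue of the two-dimensional machinery developed in Sections 2--5. During preprocessing, sort $\bez = \{t_1 < t_2 < \cdots < t_{N_0}\}$ using any standard comparison-based algorithm; this uses $CN_0\log N_0$ operations and $CN_0$ storage. For a query $t \in \R$, a binary search in the sorted array locates the position of $t$ in $C\log N_0$ operations, giving access to the $D_0$ sample points nearest to $t$ on each side, where $D_0$ is a universal constant to be fixed below.

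Next, I would build the depth-$D_0$ map $\bxi_+^t$ in two stages. For each sample point $t_i$, let $S(t_i) \subset \bez$ be a bounded neighborhood consisting of a universal number of consecutive sample points around $t_i$. Applying the one-dimensional analogue of the lasso-type quadratic program from Section~\ref{sect:lasso} to the restriction $f|_{S(t_i)}$ yields, in constant time, an approximately optimal one-jet $P_i \in \bP$ at $t_i$ whose $\wtp$ contribution is within a universal constant factor of $\norm{f}_{\ctp(\bez)}$. Because $P_i$ is determined by $f$ only through $f|_{S(t_i)}$, the assignment $f \mapsto P_i$ has universally bounded depth.

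With these one-jets fixed, define $\bxi_+^t(f)$ at a query point $t$ by a local Hermite-type patching: locate the enclosing interval $[t_i,t_{i+1})$, or use the nearest endpoint when $t$ lies outside $[\min\bez,\max\bez]$, and form the two-jet of a bounded-degree polynomial prescribed by $P_i$ and $P_{i+1}$, modified by a small correction that preserves nonnegativity, in the spirit of the transition-jet construction in Section~\ref{section:transition jet}. The resulting two-jet depends on $f$ only through a bounded number of values, so the overall depth of $\bxi_+^t$ is universally bounded, giving (A).

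The main obstacle is (B): the locally constructed two-jets must arise from a single nonnegative global $F$ with $\norm{F}_{\ct(\R)} \leq C\norm{f}_{\ctp(\bez)}$. My plan is to assemble $\{P_i\}_{i=1}^{N_0}$ into a Whitney field on $\bez$, verify its $\wtp$-norm via the one-dimensional analogues of Lemma~\ref{lem.Q+M} and Lemma~\ref{lem.wf-rep} (which in one dimension reduce to consecutive-pair compatibility together with Lemma~\ref{lem.perturb}-style perturbation estimates controlling nonnegativity), and then apply the one-dimensional version of Lemma~\ref{lem.WT}(B) to produce $F \in \ctp(\R)$. By construction the two-jet of $F$ at any query $t$ coincides with the patched polynomial $\bxi_+^t(f)$. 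The algorithmic claim (C) then follows from the $CN_0\log N_0$ cost of sorting and the $C\log N_0$ cost of binary search plus the constant-time local computation, so the efficient representation of $\bxi_+^t$ is produced within the advertised complexity.
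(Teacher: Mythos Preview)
Your plan differs from the paper's and contains a real gap in part (B). The paper does not build one-jets $P_i$ at the sample points and then interpolate between them; instead it constructs a single global extension operator $\Eb:\ctp(\bez)\to\ctp(\R)$ of the form $\Eb(f)=\sum_{i}\theta_i\cdot\Eb_i(f)$, where each $\Eb_i$ is a bounded nonnegative extension from the three consecutive points $\{t_i,t_{i+1},t_{i+2}\}$ (obtained by the one-dimensional analogue of the quadratic program in Section~\ref{sect:lasso} followed by the one-dimensional $T_w$), and $\{\theta_i\}$ is a nonnegative $C^2$ partition of unity. Nonnegativity of $\Eb(f)$ is then automatic, and one simply sets $\bxi_+^t:=\bjet_t^+\circ\Eb$, so that (B) holds with $F=\Eb(f)$ by definition.

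In your proposal, $\bxi_+^t(f)$ is defined by a Hermite-type patching of the one-jets $P_i,P_{i+1}$, while $F$ is produced separately by applying the one-dimensional Lemma~\ref{lem.WT}(B) to the Whitney field $(P_i)$. You then assert that ``by construction the two-jet of $F$ at any query $t$ coincides with the patched polynomial $\bxi_+^t(f)$'', but this is not justified: the Whitney operator $T_w$ has its own off-node formula, and there is no reason it should agree with your Hermite interpolant. Equally serious is the phrase ``modified by a small correction that preserves nonnegativity'': a Hermite cubic matching one-jets that are individually nonnegative-compatible need not be nonnegative on the interval, and any ad hoc correction risks breaking either the $C^2$ gluing across nodes or the norm bound. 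This is exactly the difficulty that nonnegative interpolation must confront, and the paper sidesteps it by patching nonnegative \emph{functions} with a nonnegative partition of unity rather than patching jets. A secondary gap is the compatibility of adjacent $P_i,P_{i+1}$: each is only an approximate local minimizer on its own set $S(t_i)$, and you have not shown the consecutive-pair estimates needed for the one-dimensional analogue of Lemma~\ref{lem.wf-rep}.
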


		\begin{theorem}\label{thm.bd-1dpm}
			Let $ \bez \subset \R $ be a finite set with $ \#(\bez) = N_0 $. We think of $ \ct(\bez) \approx \R^{N_0} $. Then there exists a collection of maps $ \set{\bxi_{\pm}^t: t \in \R} $, where $ \bxi_{\pm}^t:\ct(E) \to \bP^+ $ for each $ t \in \R $, such that the following hold.
			\begin{enumerate}[(A)]
				\item There exists a universal constant $ D_0 $ such that for each $ t \in \R $, the map $ \bxi_\pm^t: \ct(\bez) \to \bP^+ $ is linear and of depth $ D_0 $. 
				\item Let $ f \in \ct(\bez) $ be given. Then there exists a function $ F \in \ct(\R) $ such that 
				\begin{equation*}
					\bjet_t^+F = \bxi_\pm^t(f)
					\text{ for all } t \in \R,
					\enskip
					\norm{F}_{\ct(\R)} \leq C\norm{f}_{\ct(\bez)}, \text{ and }
					F(t) = f(t)
					\for t \in E.
				\end{equation*}
				\item There is an algorithm, that takes the given data, performs one-time work, and then responds to queries. 
				
				A query consists of a point $ t \in \R $, and the response to the query is the depth-$ D_0 $ map $ \bxi_{\pm}^t $, given its efficient representation.
				
				The one-time work takes $ CN\log N $ operations and $ CN $ storage. The time to answer a query is $ C\log N $.
			\end{enumerate}
			
		\end{theorem}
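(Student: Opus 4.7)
The plan is to reduce to the classical one-dimensional Whitney Extension Theorem together with a binary-search-tree data structure on $\bez$. First I would sort $\bez = \{t_1 < t_2 < \cdots < t_{N_0}\}$ in $C N \log N$ operations and build a balanced binary search tree, using $CN$ storage. This already gives the preprocessing bound in (C), and after this setup, for any query $t \in \R$ we can locate the unique index $i$ with $t_i \leq t < t_{i+1}$ (with the usual convention at the ends) in $C\log N$ operations.

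Next, for each data point $t_i \in \bez$ I would assign a one-jet $P^{t_i} \in \bP$ whose coefficients are explicit linear combinations of the values $f(t_j)$ for $j$ ranging over a bounded window $|j-i| \leq k_0$ (where $k_0$ is a universal constant); these combinations are the standard divided-difference approximations to $f(t_i)$ and $f'(t_i)$. Classical divided-difference estimates then yield $\norm{(P^{t_i})_{i=1}^{N_0}}_{\wt(\bez)} \leq C\norm{f}_{\ct(\bez)}$. Applying the one-dimensional form of Lemma \ref{lem.WT}(B), there is a linear extension operator $T_w^{\bez}$ producing $F \in \ct(\R)$ with $\norm{F}_{\ct(\R)} \leq C\norm{f}_{\ct(\bez)}$, $F(t_i) = f(t_i)$, and $\bjet_{t_i} F = P^{t_i}$. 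Concretely, on each gap interval $[t_i, t_{i+1}]$, $F$ can be taken to be a Hermite-type polynomial glued to neighboring intervals via a partition of unity subordinate to the Whitney decomposition of $\R\setminus\bez$ into dyadic intervals of bounded geometry; only boundedly many data points influence $F$ on any such interval. I then set $\bxi_\pm^t(f) := \bjet_t^+ F$.

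The map $\bxi_\pm^t$ is linear because $P^{t_i}$ is a linear combination of $f$-values, the extension operator $T_w^{\bez}$ is linear, and $\bjet_t^+$ is linear. For depth, locating $i$ with $t_i \leq t < t_{i+1}$ reveals that $\bjet_t^+ F$ depends only on the coefficients of $F$ on the two or three Whitney intervals touching $t$, and hence only on $f(t_j)$ for $j$ within a bounded window of $i$; this gives $\depth(\bxi_\pm^t) \leq D_0$ for a universal $D_0$ and provides its efficient representation as the list of these $\leq D_0$ indices together with the (precomputed) coefficient formulas. The main technical step is verifying (B), namely that the resulting global $F$ satisfies $\bjet_t^+ F = \bxi_\pm^t(f)$ simultaneously for every $t \in \R$ with the uniform bound $\norm{F}_{\ct(\R)} \leq C\norm{f}_{\ct(\bez)}$; this is where I would invoke the classical one-dimensional Whitney Extension Theorem to ensure that the local Hermite pieces glue into a single $\ct(\R)$ function of bounded norm, and that the construction is independent of the query $t$. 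The query-time bound in (C) then follows because, after the $C \log N$ binary search, extracting the $D_0$ relevant indices and evaluating the two-jet of $F$ at $t$ uses only $O(1)$ additional operations.
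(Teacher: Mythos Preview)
Your approach is correct and lands in the same general family as the paper's, but the two constructions are organized differently enough that a brief comparison is worthwhile.

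The paper does not build a single global Whitney field $(P^{t_i})_{i}$ and then apply a global Whitney extension. Instead it first handles the base case $\#(\bez)\leq 3$ by minimizing the natural quadratic form on $\wt(\bez)$ (the linear analogue of the $\overline{\Q}+\overline{\M}$ used in the nonnegative theorem) and applying the classical Whitney extension operator; this yields a linear local operator $\Ebpm^i$ on each triple $\bez^{(i)}=\{t_i,t_{i+1},t_{i+2}\}$. It then glues these local extensions via a nonnegative $C^2$ partition of unity $\{\theta_i\}$ subordinate to the overlapping cover $(-\infty,t_3),(t_2,t_4),\dots,(t_{N_0-2},\infty)$, exactly as in \eqref{eq.Eb-def}. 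The depth bound and the set $\overline{S}(t)$ in \eqref{eq.bst} then fall out immediately because at most two $\theta_i$'s are nonzero at any $t$, so $\bxi_\pm^t$ depends only on four consecutive data points.

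Your route---assign jets by divided differences over a bounded window, then invoke the one-dimensional Whitney extension on the full field---also works, but it shifts the burden: you must check directly that the divided-difference field satisfies $\norm{(P^{t_i})}_{\wt(\bez)}\leq C\norm{f}_{\ct(\bez)}$ globally, not just locally. This is true, but it is exactly the step the paper sidesteps by working triple-by-triple (where the bound is a trivial finite-dimensional estimate) and letting the partition of unity absorb the gluing error. Your phrase ``Whitney decomposition of $\R\setminus\bez$ into dyadic intervals'' is also slightly off: for this problem one works directly on the gap intervals $[t_i,t_{i+1}]$ (or the overlapping triples), not on a dyadic decomposition of the complement. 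Either organization gives the same complexity and depth bounds.
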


	\end{subtheorem}

	The explanation for Theorems \ref{thm.bd-1d} and \ref{thm.bd-1dpm} without the complexity statements were given in \cite{JL20}. We repeat the explanations for completeness, and further elaborate on the complexity.
	
	Using at most $ CN_0\log N_0 $ operations and $ CN_0 $ storage, we can sort 
	\begin{equation*}
		\bez = \set{t_1, \cdots, t_{N_0}} 
		\text{ with }
		t_1 < \cdots < t_{N_0}.
	\end{equation*}
	
	Let us begin with Theorem \ref{thm.bd-1d}. 
	
	Suppose $ \#(\bez) \leq 3 $. Let $ \overline{\Q} $ and $ \overline{\M} $ be as in \eqref{eq.Q-def} and \eqref{eq.M-def}, but with $ \bP $ instead of $ \P $. Let $ \overline{f} : \bez \to \pos $. Let $ \vec{P}_0 $ be a section of $ \bez \times \bP $ (i.e., a Whitney field in one-dimension) that minimizes $ ({\overline{\Q}} + \overline{M}) $ subject to the constraint $ (\vec{P}_0,t)(t) = f(t) $ for $ t \in \bez $ (see Section \ref{sect:lasso}).
	Let $ \overline{T}_W $ be the one-dimensional counterpart of the operator in Lemma \ref{lem.WT}(B). Then $ \overline{F}:= \overline{T}_W(\vec{P}_0) \in \ct(\R) $ with $ \overline{F}(t) = f(t) $ and $ \overline{F}(t) \geq 0 $ on $ \R $, thanks to Lemma \ref{lem.WT}(B). By the one-dimensional counterpart of Lemma \ref{lem.Q+M}, we have $ \norm{\overline{F}}_{\ct(\R)} \leq C\norm{\overline{f}}_{\ctp(\bez)} $. Thus, we have constructed a bounded nonnegative extension operator $ \Eb:\ctp(\bez) \to \ctp(\R) $ if $ \#(\bez) \leq 3 $. We can simply take the map $ \overline{\Xi}_t(\cdot) $ in Theorem \ref{thm.bd-1d}(B) to be $ \bjet_t^{+}\circ\Eb(\cdot) $. 
	
	We have shown in Theorem 2.A. of \cite{JL20} that there exists a bounded nonnegative extension operator $ \Eb:\ctp(\bez) \to \ctp(\R) $ of bounded-depth in the form
	\begin{equation}
		\Eb(f)(t) = \sum_{i = 1}^{N_0 -2}\theta_i(t)\cdot \Eb_i(f)(t),
		\label{eq.Eb-def}
	\end{equation}
	where 
	\begin{itemize}
		\item $ \bez^{(i)} = \set{t_i, t_{i+1}, t_{i+2}} $,
		\item $ \Eb^i(\cdot): \ctp(\bez^{(i)}) \to \ctp(\R) $ is the bounded nonnegative extension operator constructed in the previous step, and
		\item $ \theta_1, \theta_2, \cdots, \theta_{N_0 - 3}, \theta_{N_0-2} $ form a nonnegative $ C^2 $ partition of unity subordinate to the cover {$ (-\infty, t_3), (t_2, t_4), \cdots, (t_{N_0 -3}, t_{N_0-1}), (t_{N_0-2},\infty) $}, such that\begin{equation*}
			\abs{\ddtm \theta_i(\hat{t})} \leq \begin{cases}
				C\abs{t_{i+1}-t_i}^{-m} \text{ if }\hat{t} \in (t_i, t_{i+1})\\
				C\abs{t_{i+2}-t_{i+1}}^{-m}\text{ if } \hat{t} \in (t_{i+1}, t_{i+2})
			\end{cases}\,,
			\for i = 1, \cdots, N_0 -2.
		\end{equation*}
	\end{itemize}
	
	Given $ t \in \R $ and $ i \in \set{1, \cdots, N_0-2} $, we can compute $ \bjet_t^+\theta_i $ using at most $ C\log N_0 $ operations. 
	
	Let $ t \in \R $ be given. Note that $ t $ is supported by at most two of the $ \theta_i $'s. In $ C\log N_0 $ operations, we can find all $ i', i'' \in \set{1, \cdots, N_0-2} $ (possibly $ i' = i'' $) such that $ t \in \supp{\theta_{i'}}\cup\supp{\theta_{i''}} $. It is a standard search algorithm and requires at most $ C\log N_0 $ operations, since $ \bez $ has been sorted. Finally, we simply set
	\begin{equation*}
		\overline{\Xi}_t(\cdot) := \bjet_t^+\circ \brac{\sum_{i \in\set{i', i''}}\theta_i\cdot \Eb_i(\cdot)}.
	\end{equation*}
	
	It is clear from construction that $ \Xi^t_+(\cdot) $ depends only on $ f|_{\overline{S}(t)} $, where
	\begin{equation}
		\overline{S}(t):=\begin{cases}
			\bez &\text{ if } \#(\bez) \leq 3\\
			\text{three closest points in $\bez$ closest to $t$} &\text{ if } \#(\bez) > 3 \text{ and } t \notin [t_1, t_{N_0}]\\
			\set{t_1, t_2, t_3} &\text{ if } t \in [t_1, t_2]\\
			\set{t_{N_0-2}, t_{N_0 - 1}, t_{N_0}} &\text{ if } t \in [t_{N_0 - 1}, t_{N_0}]\\
			\set{t_1', t_2', t_3', t_4'} \subset \bez \text{ with } t_1' < t_2' \leq t \leq t_3' < t_4' &\text{ otherwise}
		\end{cases} \,.
		\label{eq.bst}
	\end{equation}
	Theorem \ref{thm.bd-1d}(A) then follows.
	
	Theorem \ref{thm.bd-1d}(B) follows from the fact that the operator $ \Eb $ in \eqref{eq.Eb-def} is a bounded nonnegative extension operator on $ \ctp(\bez) $. 
	
	Theorem \ref{thm.bd-1d}(C) follows from the discussions above on complexity.
	
	We have finished explaining Theorem \ref{thm.bd-1d}.
	
	The explanation for Theorem \ref{thm.bd-1dpm} is almost identical with some simplification, which we explain below.
	
	When constructing a bounded extension operator for $ \ct(\bez) $ with $ \#(\bez) \leq 3 $, we use 
	\begin{itemize}
		\item the natural quadratic form associated with $ \wt(\bez) $ instead of $ ({\overline{\Q}} + \overline{M}) $; and
		\item the classical Whitney extension operator instead of $ T_w $ in Lemma \ref{lem.WT}(B).
	\end{itemize}
	See \cite{F05-L,FK09-Data-1,FK09-Data-2} for details and further discussion on linear extension operators without the nonnegative constraint.

	This concludes the explanation for Theorem \ref{thm.bd-1dpm}.

	\subsection{Local extension problem}
	
	\newcommand{\xxq}{{\Xi_{x,Q}}}
	
	The main lemma of the section is the following.

	\begin{lemma}\label{lem.loc-ext}
		Let $ Q \in \Lsk $ with $ \Lsk $ as in \eqref{eq.Lsk-def}. There exists a collection of maps $ \set{\xxq : x \in (1+c_G)Q} $ where $ \xxq : \ctp(E) \times [0,\infty) \to \P^+ $ for each $ x \in (1+c_G)Q $, such that the following hold.
		\begin{enumerate}[(A)]
			\item There exists a universal constant $ D $ such that for each $ x \in (1+c_G)Q $, the map $ \xxq(\cdot\,,\cdot) : \ctp(E) \to \P^+ $ is of depth $ D $. 
			
			\item Suppose we are given $ (f,M) \in \ctp(E) \times \pos $ with $ \norm{f}_{\ctp(E)} \leq M $. Then there exists a function $ F_{Q} \in \ctp((1+c_G)Q) $ such that
			\begin{enumerate}[(\text{B}1)]
				\item $ \jet^+_x F_{Q} = \xxq(f,M) \text{ for all } x \in (1+c_G)Q $;
				\item $ \norm{F_{Q}}_{\ct((1+c_G)Q)} \leq CM $;
				\item $ F_{Q}(x) = f(x) \for
				x \in E\cap (1+c_G)Q $; and
				\item $ \jet_{\xqs}F_{Q} \in \Gk(\xqs,16,CM,f) $, with $ \xqs $ as in Lemma \ref{lem.rep} and $ \Gk $ as in \eqref{eq.G-def}. 
			\end{enumerate}

			\item There is an algorithm, that takes $ (E,f,M,Q) $ as input, performs one-time work, and then responds to queries.
			
			A query consists of a point $ x \in (1+c_G)Q $, and the response to the query is the depth-$ D $ map $ \xxq $, given its efficient representation.
			
			The one-time work takes $ CN\log N $ operations and $ CN $ storage. The time to answer a query is $ C\log N $. 
		\end{enumerate}
	\end{lemma}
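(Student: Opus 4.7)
The plan is to exploit the geometric straightening afforded by Lemma~\ref{lem.CZ}(B): since $Q \in \Lsk \subset \Ls$, there is a $C^2$-diffeomorphism $\Phi$ with $\abs{\grad^m\Phi}, \abs{\grad^m\Phi^{-1}} \leq CA_1^{-1}\dq^{1-m}$ that flattens $E\cap 5Q$ onto $\R\times\set{0}$. After straightening, the two-dimensional nonnegative interpolation problem on $(1+c_G)Q$ decouples into a one-dimensional nonnegative interpolation problem along the axis (for the ``trace'' $H_0$) and a one-dimensional unconstrained interpolation problem (for the ``transverse derivative'' $H_1$), which I will solve by Theorems~\ref{thm.bd-1d} and~\ref{thm.bd-1dpm} respectively.

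First I would compute the transition jet $T_Q(f,M)\in\P$ from Definition~\ref{def.TQ}; by Remark~\ref{rem.TQ} this takes $C$ operations after the $Q$-level preprocessing. I then list $E \cap (1+c_G)Q = \set{x_1,\ldots,x_{N_Q}}$, set $(t_i,0):=\Phi(x_i)$, record the trace data $\bar{f}(t_i):=f(x_i)$, and define the transverse-derivative target $v_i$ to be the $t_2$-partial of the affine polynomial $T_Q(f,M)\circ\Phi^{-1}$ evaluated at $(t_i,0)$; equivalently, $v_i$ is a fixed linear combination of the components of $\grad T_Q(f,M)$ with coefficients depending only on the 1-jet of $\Phi$ at $x_i$. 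Next, I would apply Theorem~\ref{thm.bd-1d} to $(\set{t_i},\bar{f})$ to obtain bounded-depth query maps $\bxi^{t}_+$ realized by some $H_0\in\ctp(\R)$ with $\norm{H_0}_{\ct(\R)}\leq CM$, and apply Theorem~\ref{thm.bd-1dpm} to $(\set{t_i},\set{v_i})$ to obtain bounded-depth query maps $\bxi^t_\pm$ realized by some $H_1\in\ct(\R)$ with $\norm{H_1}_{\ct(\R)}\leq CM$. The combined setup uses at most $CN\log N$ operations and $CN$ storage, and each pointwise query is answered in $C\log N$.

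I would then assemble, in straightened coordinates,
\begin{equation*}
\bar{F}_Q(t_1,t_2):=H_0(t_1)+t_2\,H_1(t_1)+\tfrac{1}{2} C_*M\,t_2^2
\end{equation*}
for a sufficiently large universal $C_*$, and set $F_Q:=\bar{F}_Q\circ\Phi$ on $(1+c_G)Q$. Nonnegativity of $\bar{F}_Q$ follows by completing the square, provided $H_1(t_1)^2\leq C_*M\,H_0(t_1)$ holds pointwise. The $\ct((1+c_G)Q)$-bound follows from the chain rule and the estimates on $\grad^m\Phi$ in Lemma~\ref{lem.CZ}(B); interpolation is immediate since $\bar{F}_Q(t_i,0)=H_0(t_i)=f(x_i)$. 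The query map $\Xi_{x,Q}(f,M):=\jet^+_x F_Q$ is then assembled from $\bxi^{t_1}_+(\bar{f})$, $\bxi^{t_1}_\pm(\vec{v})$, and the 2-jet of $\Phi$ at $x$; its depth is universally bounded because the depths of $T_Q$, $\bxi^t_+$, and $\bxi^t_\pm$ all are, and the case $\#(E\cap(1+c_G)Q)\leq 1$ is absorbed by the trivial data in the 1D algorithms.

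The main obstacle is verifying $H_1^2\leq C_*M\,H_0$ on all of $\R$, not merely at the nodes $\set{t_i}$. At each $t_i$, the inequality holds with a universal constant because $(f(x_i),v_i)$ arise as the value and transverse derivative of a jet in $\Gk(\xqs,16,CM,f)$ by Lemma~\ref{lem.TQ}, via a discriminant argument as in the proof of Lemma~\ref{lem.Q+M}. To extend the inequality between nodes, I would argue that the Whitney field associated with $(H_0,H_1)$ at $\set{t_i}$ lies in a one-dimensional analog of $\wtp$ of norm $\leq CM$, so that by the one-dimensional counterpart of Lemma~\ref{lem.Q+M} we may select the 1D interpolants to propagate the bound $|H_1|^2\leq C_*M\,H_0$ from the nodes to all of $\R$, at the cost of enlarging $C_*$ by a universal factor. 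Finally, the inclusion $\jet_{\xqs}F_Q\in\Gk(\xqs,16,CM,f)$ follows from Lemma~\ref{lem.TQ} applied to $T_Q(f,M)$, our choice of the $v_i$ as straightened derivatives of $T_Q(f,M)$ (so that $\jet_{\xqs}F_Q - T_Q(f,M)\in CM\cdot\sk(\xqs,16)$), and the perturbation Lemma~\ref{lem.perturb}(A).
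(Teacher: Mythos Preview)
Your strategy departs from the paper's in a way that creates two real gaps.

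\textbf{Nonnegativity.} Your justification for $H_1^2\le C_*M H_0$ does not work as stated. The operators of Theorems~\ref{thm.bd-1d} and~\ref{thm.bd-1dpm} are applied to two \emph{independent} data sets and return fixed outputs; there is no ``selection'' freedom to coordinate $H_0$ and $H_1$ after the fact, and no one-dimensional $W^2_+$ structure couples them. What actually saves the inequality is the dichotomy built into Definition~\ref{def.TQ}, which you do not invoke: in case (TQ-0) one has $T_Q\equiv 0$, hence $v_i=0$ and $H_1\equiv 0$ by linearity of $\Ebpm$; in case (TQ-1) one has $f(x)\ge B_0M\delta_Q^2$ on $E\cap 5Q$, and then a direct Taylor estimate using $|H_0'|^2\le CMH_0$ shows $H_0(t)\ge\tfrac12 f(x_j)-C'M\delta_Q^2$ near the node $t_j$, which combined with $|H_1(t)|^2\le CMf(x_j)+CM^2\delta_Q^2$ and $f(x_j)\ge B_0M\delta_Q^2$ does yield the bound. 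So the claim is salvageable, but not by the mechanism you propose.

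\textbf{Property (B4).} Here the gap is more serious. You invoke Lemma~\ref{lem.perturb}(A), whose hypothesis $f\ge B_0M\delta_Q^2$ on $E\cap 5Q$ is only available in case (TQ-1); in case (TQ-0) it fails and you have no argument. Moreover, the parenthetical claim $\jet_{\xqs}F_Q - T_Q(f,M)\in CM\cdot\sk(\xqs,16)$ is unjustified even in case (TQ-1): your $H_0$ interpolates $f$, not $T_Q$, so $(F_Q-T_Q)(x_i)=f(x_i)-T_Q(x_i)\neq 0$ in general, and $\xqs$ is not a node, so nothing forces $\jet_{\xqs}F_Q$ to land near $T_Q$. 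The paper sidesteps all of this by a different construction: it subtracts $T_Q$ from the data \emph{first}, applies a \emph{single} 1D operator to the residual $(f-T_Q|_E)\circ\Phi^{-1}|_{\R\times\{0\}}$ (using $\Eb$ when $T_Q\equiv 0$ and $\Ebpm$ otherwise), extends vertically as a constant in $t_2$, pulls back, and multiplies by a cutoff $(1-\psi)$ with $\psi\equiv 1$ near $\xqs$ before adding $T_Q$ back. This forces $\jet_{\xqs}\E_Q(f,M)=T_Q(f,M)$ exactly, so (B4) is immediate from Lemma~\ref{lem.TQ}; and nonnegativity follows cleanly from the dichotomy (either the 1D output is nonnegative, or $T_Q$ is large enough on $(1+c_G)Q$ to absorb a sign-changing correction of size $CM\delta_Q^2$). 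There is no need for a transverse term $t_2H_1$ or a quadratic $t_2^2$ correction at all.
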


	\begin{proof}
		Repeating the argument of Lemma 3.8 of \cite{JL20-Ext}, we can show that there exists a map
		\begin{equation*}
			\E_Q:\ctp(E) \times \pos \to \ctp((1+c_G)Q)
		\end{equation*}
		such that the following hold.
		\LAQ{eq.EQ-bound}{Given $ (f,M) \in \ctp(E) \times\pos $ with $ \norm{f}_{\ctp(E)} \leq M $, we have
			\begin{enumerate}[(a)]
				\item $ \E_Q(f,M) \geq 0 $ on $ (1+c_G)Q $;
				\item $ \E_Q(f,M)(x) = f(x) $ for $ x \in E \cap (1+c_G)Q $;
				\item $ \norm{\E_Q(f,M)}_{\ct((1+c_G)Q)} \leq CM $; and
				\item $ \jet_{\xqs}\E_Q(f,M) = T_Q(f,M) $, with $ \xqs $ as in Lemma \ref{lem.rep}, $ T_Q $ as in Definition \ref{def.TQ}. 
		\end{enumerate}}
		
		\LAQ{eq.EQ-depth}{For each $ x \in (1+c_G)Q $, there exists a set $ S_Q(x) \subset E $ with $ \#(S_Q(x)) \leq D_0 $ for some universal constant $ D_0 $, such that the following holds: Given $ (f,M),(g,M) \in \ctp(E)\times\pos $ with $ \norm{f}_{\ctp(E)}, \norm{g}_{\ctp(E)} \leq M $ and $ f|_{S_Q(x)} = g|_{S_Q(x)} $, we have $ \jet_x^+ \E_Q(f,M) = \jet_x^+\E_Q(g,M) $.}
		
		To prove Lemma \ref{lem.loc-ext}, we need to dissect the operator $ \E_Q $ and analyze its complexity. 
		
		As in Lemma 3.8 of \cite{JL20-Ext}, the operator $ \E_Q $ takes the following form:
		\begin{equation}
			\begin{split}
				\E_Q(f,M) &:= {T_Q(f,M)} + (1-\psi)\cdot \widetilde{\mathcal{E}}_Q(f,M),\,\text{ where }\\
				\widetilde{\mathcal{E}}_Q(f,M)
				&:= 
				\bigg(
				\overbrace{{ V \circ  
						\underbrace{
							\bigg[
							\brac{  
								\Delta_{f,M}^Q \Ebpm + (1-\Delta_{f,M}^Q)\Eb 
							} 
							\underbrace{
								\brac{(f - {T_Q(f,M)}\big|_{E}) \circ \Phi^{-1}\big|_{\R \times \set{0}} }
							}_{\text{straightening local data}}
							\bigg]
						}_{\text{one-dimensional extension}}   }}^{\text{vertical extension}}
				\bigg) 
				\circ \Phi\,.
			\end{split}
			\label{eq.EQ}
		\end{equation}
		Here, in the order of appearance in \eqref{eq.EQ},
		\begin{itemize}
			\item $ T_Q $ is as in Definition \ref{def.TQ};
			\item $ \psi \in \ctp(\rt) $ with $ \psi \equiv 1 $ near $ \xqs $ (see Lemma \ref{lem.rep}), $ \supp{\psi} \subset B(\xqs,\frac{c_0}{2}\dq) $ with $ c_0 $ as in Lemma \ref{lem.rep}, and $ \abs{\d^\alpha \psi} \leq C\dq^{-\abs{\alpha}} $;
			\item $ V $ is the vertical extension map $ V(g)(t_1,t_2) := g(t_1) $, for $ g $ defined on a subset of $ \R $;
			\item $ \Delta_{f,M}^Q $ is an indicator function defined by
			\begin{equation*}
				\Delta_{f,M}^Q := \begin{cases}
					1 &\text{ if $ T_Q(f,M) $ is not the zero polynomial}\\
					0 &\text{ otherwise}
				\end{cases}\,\text{;}
			\end{equation*}
			\item $ \Eb $ and $ \Ebpm $, respectively, are the one-dimensional extension operators associated with Theorem \ref{thm.bd-1d} and Theorem \ref{thm.bd-1dpm} (see also Theorems 2.A and 2.B of \cite{JL20});
			\item $ \Phi $ is the diffeomorphisms in Lemma \ref{lem.CZ}(B). 
		\end{itemize}

		We bring ourselves back to the setting of Lemma \ref{lem.loc-ext}. Recall the definition of $ \jet_x^+ $ as in \eqref{eq.jet-def}. We want to define the maps $ \set{\Xi_{x,Q} : x \in (1+c_G)Q} $ by
		\begin{equation}
			\Xi_{x,Q} := \jet_x^+\circ\E_Q
			\for x \in (1+c_G)Q.
			\label{eq.xxq-def}
		\end{equation}
		
		Lemma \ref{lem.loc-ext}(A) follows from \eqref{eq.EQ-depth}. Lemma \ref{lem.loc-ext}(B) follows from \eqref{eq.EQ-bound}. 
		
		It remains to examine Lemma \ref{lem.loc-ext}(C). Suppose we have performed the necessary one-time work using at most $ CN\log N $ operations and $ CN $ storage. 
		
		Let $ x \in (1+c_G)Q $ be given. 
		
		\begin{enumerate}[Step 1.]
			\item We compute
			\begin{equation*}
				t_x := Proj_{u_Q^\perp}(x-\erep(Q)).
			\end{equation*}
			Here, 
			\begin{itemize}
				\item $ Proj_{u_Q^\perp} $ denotes orthogonal projection onto $ \R u_Q^\perp $;
				\item the pair $ \set{u_Q, u_Q^\perp} $ is as in Lemma \ref{lem.uQ}; and
				\item $ \erep(Q) $ is as in Lemma \ref{lem.FK-CZ}(C). 
			\end{itemize}
			All the procedures involved in this step require at most $ C\log N $ operations, thanks to Lemma \ref{lem.FK-CZ}(B) and Lemma \ref{lem.uQ}. 
			
			\item Let $ \rho $ be the rotation about the origin specified by $ e_2 \mapsto u_Q $. We can compute $ \jet_x^+\rho $.

			\newcommand{\bst}{\overline{S}(t_x)}
			\newcommand{\ebx}{\overline{E}_Q}
			\item Let $ u_Q^\perp $ and $ Proj_{u_Q^\perp} $ be as in Step 1. We set
			\begin{equation*}
				\ebx := Proj_{u_Q^{\perp}}(E\cap (1+c_G)Q -\erep(Q)) \subset \R.
			\end{equation*}
			Recall from Lemma \ref{lem.sort} that we can compute the {\em sorted } list $ \ebx $ for each $ Q \in \Lsk $ using at most $ CN\log N $ operations and $ CN $ storage. 
			
			Let $ \ctp(\ebx) $ and $ \ct(\ebx) $ be the one-dimensional trace spaces. Note that we have sorted $ \ebx $. Let $ \bxi^{t_x}_+ $ and $ \bxi^{t_x}_\pm $, respectively be the maps associated with $ \ctp(\ebx) $ and $ \ct(\ebx) $, as in Theorems \ref{thm.bd-1d} and \ref{thm.bd-1dpm}.
			
			\item Recall from Lemma \ref{lem.CZ}(B) that the diffeomorphism $ \Phi $ is defined in terms of a function $ \phi $, satisfying \eqref{eq.phi-1} and \eqref{eq.phi-2}. We compute $ \bjet_{t_x}^+\phi $, where $ \bjet_{t_x}^+ $ is the single-variable two-jet at $ t_x $. We can accomplish this by simply setting $ \bjet_{t_x}^+\phi := \bxi_{\pm}^{t_x}(\phi|_{\ebx}) $, with $ \bxi_{\pm}^{t_x} $ as in Theorem \ref{thm.bd-1dpm}. Since we have already sorted the set $ \ebx $ in Step 3, computing $ \bxi_\pm^{t_x}(\phi|_{\ebx}) $ requires at most $ C\log N $ operations.

			\item Similar to Step 4, the query time for $ \bjet_{t_x}^+\circ \Eb(\cdot)$\footnote{Note that $ \Eb(\cdot) $ is only defined for $ \tilde{f}:\ebx \to \pos $} and $ \bjet_{t_x}^+\circ \Ebpm(\cdot) $ is $C\log N$, since set $ \ebx $ has been sorted in Step 3.

			\item By Lemma \ref{lem.CZ}(B), the diffeomorphism $ \Phi = (\Phi_1, \Phi_2) $ and its inverse $ \Phi^{-1} = (\Psi_1, \Psi_2)  $ are given by
			\begin{equation*}
				\begin{split}
					\Phi\circ\rho(t_1, t_2) &= (t_1, t_2 - \phi(t_1)), \text{ and }
					\\
					\rho^{-1}\circ\Phi^{-1}(t_1', t_2') &= (t_1', t_2' + \phi(t_1')).
				\end{split}
			\end{equation*}
			Therefore, we can compute $ \jet_x^+\Phi_i $ and $ \jet_x^+\Psi_i $, $ i = 1,2 $, from the (single-variable) two-jet of $ \phi $. 
			
			\item We compute $ T_Q(f,M) $, as in Definition \ref{def.TQ}. Computing $ \ssq $ as in \eqref{eq.ssq-def} requires at most $ C\log N $ operations, by Lemma \ref{lem.FK-CZ}(C) and Lemma \ref{lem.rep}. After that, we can compute $ T_Q(f,M) $ in $ C $ operations. See Remark \ref{rem.TQ}.

		\end{enumerate}
		
		Combining all the steps above, we see that we can compute the map $ \Xi_{x,Q} $ in \eqref{eq.xxq-def} via formula \eqref{eq.EQ} using at most $ C\log N $ operations. After that, given $(f,M) \in \cte\times\pos$, we can compute $\Xi_{x,Q}(f,M)$ in $C$ operations.
		
		This proves Lemma \ref{lem.loc-ext}.
	\end{proof}

	\subsection{Partitions of unity}\label{sect:pou}

	Recall the definition of $ \jet_x^+ $ as in \eqref{eq.jet-def}.

	We can construct a partition of unity $ \set{\theta_Q : Q \in \Lz} $ that satisfies the following properties:
	\begin{itemize}
		\item $\theta \geq 0 $;
		\item $ \sum_{Q \in \Lz}\theta_Q \equiv 1 $;
		\item $ \supp{\theta_Q}\subset (1+c_G/2)Q $ for each $ Q \in \Lz $;
		\item For each $ Q \in \Lz $, $ \abs{\d^\alpha\theta_Q} \leq C\dq^{2-\abs{\alpha}} $ for $ \abs{\alpha} \leq 2 $;
		\item After one-time work using at most $ CN\log N $ operations and $ CN $ storage, we can answer queries as follows: Given $ x \in \rt $ and $ Q \in \Lz $, we return $ \jet_x^+\theta_Q $. The time to answer query is $ C\log N $.
	\end{itemize}

	See Section 28 of \cite{FK09-Data-2} for details.

	\subsection{Proof of Theorem \ref{thm.bd-alg}}\label{proof-thm.bd-alg}

	\begin{proof}[Proof of Theorem \ref{thm.bd-alg}]
		Slightly modifying the proof of Theorem 1 of \cite{JL20-Ext}, we can show that there exists a map
		\begin{equation}
			\E:\ctp(E)\times\pos \to \ctp(\rt)
		\end{equation}
		such that the following hold.
		\LAQ{eq.E-bound}{Given $ (f,M) \in \ctp(E) \times\pos $ with $ \norm{f}_{\ctp(E)} \leq M $, we have
			\begin{enumerate}[(a)]
				\item $ \E(f,M) \geq 0 $ on $ \rt $;
				\item $ \E(f,M)(x) = f(x) $ for $ x \in E $; and
				\item $ \norm{\E(f,M)}_{\ct(\rt)} \leq CM $.
		\end{enumerate}}
		
		\LAQ{eq.E-depth}{For each $ x \in \rt $, there exists a set $ S(x) \subset E $ with $ \#(S(x)) \leq D $ for some universal constant $ D $, such that the following holds: Given $ (f,M),(g,M) \in \ctp(E)\times\pos $ with $ \norm{f}_{\ctp(E)}, \norm{g}_{\ctp(E)} \leq M $ and $ f|_{S(x)} = g|_{S(x)} $, we have $ \jet_x^+ \E(f,M) = \jet_x^+\E(g,M) $.}
		
		Moreover, $ \E $ takes the form of
		\begin{equation}
			\E(f,M)(x) := \sum_{Q \in \Lz} \theta_Q(x) \cdot \E_Q^\sharp(f,M)(x) = \sum_{Q \in \Lambda(x)} \theta_Q(x)\cdot\E_Q^\sharp(f,M)(x),
		\end{equation}
		where
		\begin{itemize}
			\item $ \set{\theta_Q:Q \in \Lz} $ is the partition of unity constructed in Section \ref{sect:pou}; 
			\item $ \Lambda(x) $ is the set in Lemma \ref{lem.FK-CZ}(A); and
			\item $ \E_Q^\sharp $ is defined by the following rule.
			\begin{itemize}
				\item Suppose $ Q \in \Lsk $. Then $ \E_Q^\sharp(f,M) := \E_Q(f,M) $ with $ \E_Q $ as in Lemma \ref{lem.loc-ext};
				\item Suppose $ Q \in \Ls\setminus \Lsk $. Then $ \E_Q^\sharp := T_w^\xqs \circ T_Q $, with $ T_Q $ as in Definition \ref{def.TQ}, $ \xqs $ as in Lemma \ref{lem.rep}, and $ T_w^\xqs $ as in Lemma \ref{lem.WT}(B) (associated with the singleton $ \{\xqs\} $). 
				\item Suppose $ Q \in \Le $. Then $ \E_Q^\sharp := T_w^{x_{\mu(Q)}^\sharp}\circ T_{\mu(Q)} $, with $ \mu $ as in Lemma \ref{lem.mu}, $ x_{\mu(Q)}^\sharp $ as in Lemma \ref{lem.rep}, $ T_{\mu(Q)} $ as in Definition \ref{def.TQ}, and $ T_w^{x_{\mu(Q)}^\sharp} $ as in Lemma \ref{lem.WT}(B) (associated with the singleton $ \{x_{\mu(Q)}^\sharp\} $). 
				\item Suppose $ Q \in \Lz \setminus (\Ls \cup \Le) $. Then $ \E_Q^\sharp :\equiv 0 $. 
			\end{itemize}
		\end{itemize}
		
		We set
		\begin{equation}
			\Xi_x(f,M) := \jet_x^+\circ\E(f,M) = \sum_{Q \in \Lambda(x)} \jet_x^+\theta_Q\odot_x^+\jet_x^+\circ\E_Q^\sharp(f,M)
			\for x \in \rt.
			\label{eq.xx-def}
		\end{equation}
		
		Theorem \ref{thm.bd-alg}(A) follows from \eqref{eq.E-bound} and Theorem \ref{thm.bd-alg}(B) follows from \eqref{eq.E-depth}. 
		
		We now turn to Theorem \ref{thm.bd-alg}(C). Suppose we have performed the necessary one-time work using at most $ CN\log N $ operations and $ CN $ storage. 
		
		By Lemma \ref{lem.FK-CZ}(A) and Section \ref{sect:pou}, we can compute $ \Lambda(x) $ and $ \set{\jet_x^+\theta_Q: Q \in \Lambda(x)} $ using at most $ C\log N $ operations.
		
		By Lemma \ref{lem.loc-ext}, we can compute 
		\begin{equation*}
			\set{\jet_x^+\circ\E_Q(f,M) : Q \in \Lsk \cap \Lambda(x)} 
		\end{equation*}
		using at most $ C\log N $ operations, after computing $ \Lambda(x) $.
		
		By Lemma \ref{lem.rep} and Remark \ref{lem.TQ}, we can compute
		\begin{equation*}
			\set{\jet^+_x\circ T_w^{\xqs} \circ T_Q(f,M) : Q \in \Lambda(x) \cap (\Ls \setminus \Lsk)}
		\end{equation*}
		using at most $ C\log N $ operations, after computing $ \Lambda(x) $.
		
		By Lemma \ref{lem.mu}, Lemma \ref{lem.rep} and Remark \ref{rem.TQ}, we can compute 
		\begin{equation*}
			\set{\jet_x^+\circ T_w^{x_{\mu(Q)}^\sharp}\circ T_{\mu(Q)}(f,M) : Q\in \Le\cap \Lambda(x)}
		\end{equation*}
		using at most $ C\log N $ operations, after computing $ \Lambda(x) $.
		
		Therefore, we can compute $ \Xi_x $ in \eqref{eq.xx-def} using at most $ C\log N $ operations. Given $(f,M) \in \cte\times\pos$, we can compute $\Xi_x(f,M)$ in $C$ operations. Theorem \ref{thm.bd-alg}(C) follows.
		
		This proves Theorem \ref{thm.bd-alg}.
		
	\end{proof}

	\appendix
	
	\section{Convex quadratic programming problem with affine constraint}
	\label{app:QP}

	Let $ d \geq 0 $ be an integer bounded by a universal constant. We use the standard dot product on $ \R^d $ and $ \R^{2d} $. We use bold-faced letters to denote given quantities. 
	
	\renewcommand{\abs}[1]{|#1|}
	\renewcommand{\A}{\mathbf{A}}
	\newcommand{\B}{\mathbf{B}}
	\renewcommand{\L}{\mathbf{L}}
	
	We consider a general form of the minimization problem \eqref{eq.qp}:
	\begin{equation}
		\text{ Minimize }  \beta^t \A \beta + \sum_{i = 1}^d\abs{\beta^i}
		\quad \text{ subject to } \B \beta = \mathbf{b}.
		\label{eq.QP}
	\end{equation}
	Here, $ \beta = (\beta^1, \cdots, \beta^d)^t \in \R^d $ is the optimization variable, $ \A \in M_{d\times d} $ is a given positive semidefinite matrix, $ \B\in M_{d\times d} $ is a given (singular) matrix, and $ \mathbf{b} $ is a given vector. 
	
	We will solve \eqref{eq.QP} by first augmenting the system \eqref{eq.QP} to remove the absolute values in the objective function. For the augmented system, which is still convex, the solution can be found by solving for a system of linear equalities and inequalities arising from its associated Karush–Kuhn–Tucker (KKT) conditions \cite{BV-CO}. 
	
	We begin with the augmentation. Decomposing $ \beta $ into its positive and negative parts, $ \beta = \beta_+ - \beta_- $, i.e., $ \beta_+^i := \frac{1}{2}(\beta^i + \abs{\beta^i}) $ and $ \beta_-^i := \beta_+^i - \beta^i  $, we arrive at the system:
	\begin{equation}
		\begin{split}
			&\text{ Minimize } 
			\brac{\begin{matrix}
					\beta_+ \\ \beta_-
			\end{matrix}}^t
			\brac{\begin{matrix}
					\A&-\A\\-\A&\A
			\end{matrix}}
			\brac{\begin{matrix}
					\beta_+\\\beta_-
			\end{matrix}}
			+ \brac{\mathbf{1}_{2d}}^t
			\brac{\begin{matrix}
					\beta_+\\\beta_-
			\end{matrix}}
			\\
			&\text{ Subject to }
			\brac{\begin{matrix}
					\B&-\B
			\end{matrix}}	
			\brac{\begin{matrix}
			\beta_+\\\beta_-
			\end{matrix}} = \mathbf{b},\text{ and }
			\brac{\begin{matrix}
					\beta_+\\\beta_-
			\end{matrix}} \geq \mathbf{0}_{2d}.
		\end{split}
		\label{eq.QP-aug}
	\end{equation}
	
	Note that in order for \eqref{eq.QP} and \eqref{eq.QP-aug} to be equivalent, we have to include in \eqref{eq.QP-aug} the additional sign constraint
	\begin{equation}\label{eq.QP-aug-sign-0}
			\beta_+^i \beta_-^i = 0 \for i = 1, \cdots, d;
	\end{equation}
	or equivalently, for some $ I \subset \set{1, \cdots, d} $,
	\begin{equation}\label{eq.QP-aug-sign-1}
		\mathbf{e}_k^t\beta_+ = 0 \for k \in I \text{ and } \mathbf{e}_k^t\beta_- = 0 \for k \in \set{1,\cdots, d}\setminus I.
	\end{equation}
	Here, $ \set{\mathbf{e}_k : k=1,\cdots,d} $ is the standard basis for $ \R^d $.
	
	For convenience, set 
	\begin{equation*}
		\hat{\beta} := \brac{\begin{matrix}
				\beta_+\\\beta_-
		\end{matrix}}
		,\,
		\hat{\A}:= \brac{\begin{matrix}
				\A&-\A\\-\A&\A
		\end{matrix}},\text{ and }
		\hat{\B} := \brac{\begin{matrix}
				\B&-\B
		\end{matrix}} = \brac{\begin{matrix}
		\hat{\B}_1\\\vdots\\\hat{\B}_{j_{\max}}
	\end{matrix}}.
	\end{equation*}
	Let $ \set{\hat{\mathbf{e}}_i : i = 1, \cdots, 2d} $ be the standard basis for $ \R^{2d} $.

	The KKT conditions for \eqref{eq.QP-aug} coupled with \eqref{eq.QP-aug-sign-1} for a {\em fixed} $ I \subset \set{1, \cdots, d} $ are given by
	\begin{equation}\label{eq.KKT}
		\begin{split}
			2\hat{\A}\hat{\beta} - 
			\sum_{i = 1}^{2d}\mu_i\hat{\mathbf{e}}_i 
			+ \sum_{j = 1}^{j_{\max}}\lambda_j \hat{\B}_j^t 
			+ \sum_{k \in I }\nu_{k}\hat{\mathbf{e}}_{k} 
			+ \sum_{k \in \set{1, \cdots, d}\setminus I}\nu_{k}\hat{\textbf{e}}_{k + d}
			&= \mathbf{0}_{2d},
			\\
			\hat{\beta} &\geq \mathbf{0}_{2d},
			\\
			\hat{\B}\hat{\beta} - \mathbf{b} &= \mathbf{0}_{j_{\max}},
			\\
			\hat{\mathbf{e}}_k^t\hat{\beta} &= \mathbf{0} \for k \in I,
			\\
			\hat{\mathbf{e}}_{k+d}^t\hat{\beta} &= \mathbf{0} \for k \in \set{1, \cdots, d}\setminus I,
			\\
			\mu_i &\geq \mathbf{0} \for i = 1, \cdots, 2d.
			\\
			\sum_{i = 1}^{2d}\mu_i (\hat{\textbf{e}}_i^t\hat{\beta}) &= \mathbf{0}.
		\end{split}
	\end{equation}
	In the above, $ \mu_1, \cdots, \mu_{2d} $, $ \lambda_1, \cdots, \lambda_{j_{\max}} $, $ \nu_1, \cdots, \nu_d $ are multipliers, and $ \hat{\beta} $ is the primal optimization variable.  
	
	Since the matrix $ \hat{\A} $ is positive semidefinite, the primal problem in \eqref{eq.QP-aug} is convex. The KKT conditions are necessary and sufficient for the solutions to be primal and dual optimal\cite{BV-CO}. Hence, solving \eqref{eq.QP-aug} coupled with \eqref{eq.QP-aug-sign-1} for a fixed $ I \subset \set{1, \cdots, d} $ amounts to solving a bounded system \eqref{eq.KKT} of linear inequalities. The latter can be achieved, for instance by the simplex method or elimination\cite{BV-CO}. The number of operations involved is at most (doubly) exponential in system size, which is universally bounded. Therefore, we can solve \eqref{eq.QP-aug} coupled with \eqref{eq.QP-aug-sign-1} for a fixed $ I \subset \set{1, \cdots, d} $ using at most $ C $ operations. 
	
	Finally, we can solve \eqref{eq.QP} using at most $ C $ operations by solving \eqref{eq.QP-aug} coupled with \eqref{eq.QP-aug-sign-1} for every $ I \subset \set{1, \cdots, d} $ and compare the minimizers. 
	
	It is very likely that one can solve \eqref{eq.QP} more efficiently with advanced techniques. Here we content ourselves with the elementary exposition above. We refer the readers to \cite{BV-CO} for a more detailed discussion on convex optimization.

	\bibliographystyle{amsplain}

\begin{thebibliography}{10}
		
		\bibitem{BM07}
		Edward Bierstone and Pierre~D. Milman, \emph{$\mathscr{C}^m$-norms on finite
			sets and $\mathscr{C}^m$-extension criteria}, Duke Mathematical Journal
		\textbf{137} (2007), no.~1, 1--18.
		
		\bibitem{BV-CO}
		Stephan Boyd and Lieven Vandenberghe, \emph{Convex optimization}, Cambridge
		University Press, 2004.
		
		\bibitem{CK95}
		Paul~B. Callahan and S.~Rao Kosaraju, \emph{A decomposition of multidimensional
			point sets with applications to $ k $-nearest-neighbors and $ n $-body
			potential fields}, J. ACM \textbf{42} (1995), 67--90.
		
		\bibitem{F05-L}
		Charles Fefferman, \emph{Interpolation and extrapolation of smooth functions by
			linear operators}, Rev. Mat. Iberoam. \textbf{21} (2005), no.~1, 313--348.
		
		\bibitem{F07-St}
		\bysame, \emph{The structure of linear extension operators for {${C}^m$}}, Rev.
		Mat. Iberoam. \textbf{23} (2007), no.~1, 269--280.
		
		\bibitem{F09-Data-3}
		\bysame, \emph{Fitting a {$C^m$}-smooth function to data {III}}, Ann. of Math.
		(2) \textbf{170} (2009), no.~1, 427--441.
		
		\bibitem{FI20-book}
		Charles Fefferman and Arie Israel, \emph{Fitting smooth functions to data},
		{CBMS} Regional Conference Series in Mathematics, American Mathematical
		Society, 2020.
		
		\bibitem{FIL16+}
		Charles Fefferman, Arie Israel, and Garving~K. Luli, \emph{Interpolation of
			data by smooth non-negative functions}, Rev. Mat. Iberoam. \textbf{33}
		(2016), no.~1, 305—324.
		
		\bibitem{FK09-Data-1}
		Charles Fefferman and Bo'az Klartag, \emph{Fitting a {$C^m$}-smooth function to
			data. {I}}, Ann. of Math. (2) \textbf{169} (2009), no.~1, 315--346.
		
		\bibitem{FK09-Data-2}
		\bysame, \emph{Fitting a {$C^m$}-smooth function to data. {II}}, Rev. Mat.
		Iberoam. \textbf{25} (2009), no.~1, 49--273.
		
		\bibitem{JL20-Ext}
		Fushuai Jiang and Garving~K. Luli, \emph{${C^2(\mathbb{R}^2)}$ nonnegative
			interpolation by bounded-depth operators}, Advances in Math. \textbf{375}
		(2020), 107391.
		
		\bibitem{JL20}
		\bysame, \emph{Nonnegative $ {C}^2(\mathbb{R}^2)$ interpolation}, Advances in
		Math. \textbf{375} (2020), 107364.
		
		\bibitem{L10}
		Garving~K. Luli, \emph{${C}^{m,\omega}$ extension by bounded-depth linear
			operators}, Advances in Math. \textbf{224} (2010), 1927--2021.
		
	\end{thebibliography}
	
	\providecommand{\bysame}{\leavevmode\hbox to3em{\hrulefill}\thinspace}
	\providecommand{\MR}{\relax\ifhmode\unskip\space\fi MR }
	\providecommand{\MRhref}[2]{%
		\href{http://www.ams.org/mathscinet-getitem?mr=#1}{#2}
	}
	\providecommand{\href}[2]{#2}

\end{document}